\def\norm#1{\left\|{#1}\right\|} % A norm with 1 argument
\newcommand{\E}{\mathbb{E}}
\newcommand{\RR}{\mathbb{R}}
\newcommand{\ones}{\mathbf{1}}
\providecommand{\diag}{\mathop\mathrm{diag}}
\newcommand{\X}{\mathbf{X}}
\newcommand{\Z}{\mathbf{Z}}
\newcommand{\V}{\mathbf{V}}
\newcommand{\U}{\mathbf{U}}
\newcommand{\A}{\mathbf{A}}
\renewenvironment{proof}{\noindent\textbf{Proof.}\hspace*{.3em}}{\qed\\}
\newenvironment{proof-sketch}{\noindent\textbf{Proof Sketch}
  \hspace*{0.em}}{\qed\bigskip\\}
\newenvironment{proof-idea}{\noindent\textbf{Proof Idea}
  \hspace*{0.em}}{\qed\bigskip\\}
\newenvironment{proof-of-lemma}[1][{}]{\noindent\textbf{Proof of Lemma {#1}.}
  \hspace*{0.em}}{\qed\\}
\newenvironment{proof-of-corollary}[1][{}]{\noindent\textbf{Proof of Corollary {#1}.}
  \hspace*{0.em}}{\qed\\}
\newenvironment{proof-of-theorem}[1][{}]{\noindent\textbf{Proof of Theorem {#1}.}
  \hspace*{0.em}}{\qed\\}
\newenvironment{proof-attempt}{\noindent\textbf{Proof Attempt}
  \hspace*{0.em}}{\qed\bigskip\\}
\newtheorem{theorem}{Theorem}[section]
\newtheorem{lemma}{Lemma}[section]
\newtheorem{assumption}{Assumption}[section]
\newtheorem{remark}{Remark}[section]
\newtheorem{definition}{Definition}[section]
\renewcommand*{\backref}[1]{\ifx#1\relax \else Page #1 \fi}
\renewcommand*{\backrefalt}[4]{%
  \ifcase #1 \footnotesize{(Not cited.)}%
  \or        \footnotesize{(Cited on page~#2.)}%
  \else      \footnotesize{(Cited on pages~#2.)}%
  \fi
}
\newcommand*{\colorboxed}{}
\def\colorboxed#1#{%
  \colorboxedAux{#1}%
}
\newcommand*{\colorboxedAux}[3]{%
  % #1: optional argument for color model
  % #2: color specification
  % #3: formula
  \begingroup
    \colorlet{cb@saved}{.}%
    \color#1{#2}%
    \boxed{%
      \color{cb@saved}%
      #3%
    }%
  \endgroup
}
\numberwithin{equation}{section}
\newcommand{\todol}[2][]{{%
 \let\marginpar\marginnote
 \reversemarginpar
 \renewcommand{\baselinestretch}{0.8}%
 \todo[color=yellow]{#2}}}
\title{Problem-Parameter-Free Decentralized Nonconvex Stochastic Optimization}
\author{
{Jiaxiang Li} \thanks{Department of Electrical and Computer Engineering, University of Minnesota, Twin Cities.  \texttt{li003755@umn.edu}}
\and
{Xuxing Chen} \thanks{Department of Mathematics, University of California, Davis.  \texttt{xuxchen@ucdavis.edu}}
\and
{Shiqian Ma} \thanks{Department of Computational Applied Math and Operations Research, Rice University. Research supported in part by NSF grants DMS-2243650, CCF-2308597, CCF-2311275 and ECCS-2326591, and a startup fund from Rice University. \texttt{sqma@rice.edu}}
\and
{Mingyi Hong} \thanks{Department of Electrical and Computer Engineering, University of Minnesota, Twin Cities.  \texttt{mhong@umn.edu}}
}
\date{}
\begin{document}
\maketitle

% REQUIRED
\begin{abstract}
Existing decentralized algorithms usually require knowledge of problem parameters for updating local iterates. For example, the hyperparameters (such as learning rate) usually require the knowledge of Lipschitz constant of the global gradient or topological information of the communication networks, which are usually not accessible in practice. In this paper, we propose D-NASA, the first algorithm for decentralized nonconvex stochastic optimization that requires no prior knowledge of any problem parameters. We show that D-NASA has the optimal rate of convergence for nonconvex objectives under very mild conditions and enjoys the linear-speedup effect, i.e. the computation becomes faster as the number of nodes in the system increases. Extensive numerical experiments are conducted to support our findings.
\end{abstract}

\section{Introduction} 
Decentralized (distributed) optimization appears in many applications, such as machine learning~\citep{lian2017can, tang2018d, lian2018asynchronous}, robotics~\citep{queralta2020collaborative}, signal processing~\citep{hong2015unified}, and control systems~\citep{nedic2018distributed, yang2019survey}. In machine learning, decentralized optimization arises naturally when the data is either stored in different physical locations, or split into different servers to boost training efficiency. Therefore the main concerns for decentralized algorithms are data privacy, algorithmic scalability and robustness. For example, starting from earlier works~\cite{lian2017can,tang2018d}, researchers seek to develop scalable decentralized algorithms for distributed training that are provably more efficient than centralized algorithms, usually reflected in an inverse dependency over the number of devices/nodes in their final convergence rate, known as \textit{linear speedup}. 

One obstacle of applying most of the developed decentralized algorithms in practice is that their hyperparameters (such as learning rate) usually depend on information of the problem in order to show a theoretical convergence, e.g, the Lipschitz constant of the global gradient, the spectral gap of the graph adjacency matrix or other topological information of the problem. Such information is usually hard to obtain due to either physical/privacy restrictions or computational constraints (e.g. due to excessive amount of data in machine learning applications), and tedious hyperparameter tuning is thus required. Nonetheless, in most of these works people demonstrate a decent performance in experiments of the proposed algorithms without strictly following the hyperparameter rules suggested in their theory. This gap between theory and application exists in centralized optimization problems and researchers have proposed different methods to mitigate it. 
%However, this gap introduces more serious problems in the decentralized setting for two reasons: (i) the increasing difficulty to estimate the properties of the global loss function for algorithmic design in large-scale problems~\citep{yuan2022decentralized} {\red[why this is increasingly difficult? I think we need to highlight that in distributed problem it is even harder for each individual node to know properties of other functions; in the current writing, the difficulty can come from having a lot of  data, or problem becomes very difficult, so generic metrics such as Lipschitz constants are difficult.   ]}; (ii) the extra error introduced by the heterogeneity of the data distributions on different local nodes for the convergence analysis~\citep{tang2018d, koloskova2020unified}. 
%{\red[shall we mention the unknown network architecture, and the difficulty to compute network related constants such as eigenvalues of the graph laplacian, etc.?]}
However, this gap introduces more serious problems in the decentralized setting for several reasons: (i) In distributed settings, it is hard, if not impossible, for local devices to know the problem information of other devices, even if the network is fully connected~\citep{yuan2022decentralized}; (ii) The network architecture might be largely unknown for algorithmic design, especially when the data are distributed in different physical locations, thus it is difficult to compute network related constants such as eigenvalues of the graph Laplacian; (iii) The extra error introduced by the heterogeneity of the data distributions on each local nodes brings more challenges for convergence analysis~\citep{tang2018d, koloskova2020unified}.

In this work, we close these gaps by designing problem-parameter-free algorithms, i.e., algorithms whose hyperparameters do not require problem information, for decentralized optimization. Specifically, consider the following stochastic decentralized optimization problem:
\begin{equation}\label{eq_decen_problem}
    \min_{x} f(x):=\frac{1}{n}\sum_{i=1}^{n} f_i(x)
\end{equation}
where each $f_i=\E_{\xi_i\sim\mathcal{D}_i}[F_i(x, \xi_i)]$ is stored on a local device/node/agent $i$, which is assumed to be $L_i$-Lipschitz smooth and possibly nonconvex, a standard assumption in the literature. Moreover, we assume that the Lipschitz constant is not available for the algorithmic design. Each local node $i$ is only allowed to access the stochastic function $F_i(x, \xi_i)$ in the algorithm design. Note that for different node $i$ the data distribution could be highly heterogeneous, i.e., each $\xi_i$ follows completely different distributions $\mathcal{D}_i$. Also, each local agent is only connected to a limited amount of neighboring agents, forming an undirected connected graph, which is summarized by the doubly stochastic mixing matrix $W$ (see Section \ref{sec_method}). To be more specific, in this paper, ``problem-parameter-free'' means that the hyperparameters of the algorithm (such as learning rate) do not depend on problems parameters such as $L_i$ and $W$. The goal of this paper is thus to design such algorithms for solving \eqref{eq_decen_problem}.

The most straightforward method for decentralized optimization is decentralized stochastic gradient descent (D-SGD) where each local device runs stochastic gradient descent then communicates the update with their neighbors to form the next iterate. In \citet{lian2017can}, the authors provided a convergence analysis under the assumption of bounded heterogeneity, i.e., the gradient distributions across different devices are similar. To remove this assumption, another famous method is the decentralized gradient tracking algorithm (D-SGT, Algorithm \ref{algo_decen_grad_tracking}, see \citet{xu2015augmented,di2016next,nedic2017achieving,qu2017harnessing,pu2021distributed,koloskova2021improved,liu2023decentralized}) which efficiently guarantees convergence without requiring bounded heterogeneity, also yields superior numerical performances. We thus first inspect the convergence of D-SGT algorithm under the problem-parameter-free setting. Our analysis shows that D-SGT could converge when the hyperparameters are problem-parameter-free. Besides standard assumption of local Lipschitz smooth, however, this convergence result additionally requires the local functions to be {\it Lipschitz continuous} (i.e., having bounded gradient). To remove this restrictive assumption, we propose a new decentralized {\it normalized} averaged stochastic approximate gradient tracking (D-NASA, Algorithm \ref{algo_decen_normalized_averaged_grad_tracking}) which enjoys parameter-free convergence without additional assumptions. 

%, which means that the hyperparameters (i.e. learning rate) of the algorithm do not depend on the problem parameter (such as Lipchitz constant of the objective). 

Our contributions are summarized as follows.
\begin{itemize}
\vspace{-0.2cm}
    \item {\bf New analysis of D-SGT.} We investigate D-SGT (Algorithm \ref{algo_decen_grad_tracking}) and point out that one can use a learning rate that is problem-parameter-free and still guarantee the convergence, at the expense of an additional assumption: local functions are Lipschitz continuous, i.e., bounded local function gradients. This is a rather strong requirement since it implies bounded heterogeneity among different nodes (see Section \ref{sec_convergence_dsgt}). The analysis also indicates that D-SGT can no longer achieve a linear speedup under this setting. %These facts motivate us to propose more efficient problem-parameter-free algorithms.

\vspace{-0.2cm}
    \item {\bf A new parameter-free algorithm.} We propose a fully problem-parameter-free algorithm (D-NASA, Algorithm \ref{algo_decen_normalized_averaged_grad_tracking}) based on certain normalization technique that does not require information of global Lipschitz constant or spectral gap of the topology of the problem. The convergence of D-NASA is guaranteed without any additional assumption. %neither data homogeneity nor global Lipschitz continuous. 
    The convergence result matches the lower bound for nonconvex stochastic optimization and still enjoys the desired linear speedup. %, i.e., a superior performance over the centralized counterparts.

\vspace{-0.2cm}
    \item {\bf Normalization controls consensus error.} D-NASA utilizes a novel control over the consensus error. Specifically, we notice that normalized update efficiently helps the control of the consensus error, and enables controlling the cumulative consensus error directly by stepsizes (see Section \ref{sec_convergence_dnasa}). This opens the door of adapting a wide class of normalization-based adaptive algorithms to the decentralized setting, and its fine-grained analysis is of independent interest. %{\red[not quite understand this paragraph]}\jl{Rewrote this part}

\vspace{-0.2cm}
    \item {\bf Numerical evidences.} We conduct extensive numerical study to verify our findings. We observe linear speedup effect of D-NASA with the stepsize exactly predicted by our theory. We also show that D-NASA compares favorably with existing algorithms D-SGD, D-SGT and D-ASAGT in terms of convergence speed. We empirically demonstrate that D-NASA does not require any parameter tuning for a wide range of Lipschitz smooth parameters, and network topology. %{\color{red}[shall we say for a wide range of Lispchitz smooth parameter, and network topology?][how about network topology?]}\jl{we actually tested for two graphs: random and ring graph. Not sure if it's worth mentioning here...}{\red[for randomly generated graph, the diameter/spectrum radius should be different each time, so we should be ok?][but worth having more topologies tested (appendix?), since this is a strong gain for our benefit. ]} \jl{For random graph, I set the probability of each two node connected to be high (0.8) to make sure the graph is connected.} \jl{Added more experiments in the appendix} 
    Without this technique, the stepsize tuning process can be time-consuming since the optimal choices of the hyperparameters vary drastically when datasets change.
\end{itemize}

%{\red[summarize the rest of the two paragraphs using a couple of words at the beginning.]}

\noindent\textbf{Notation.} We denote $\X^t:=[x_1^{t},..., x_n^{t}]$ which is the collection of local variables $x_i^t$ at iteration $t$ for $i=1,...,n$ as column vectors. $\Bar{x}^t:=\frac{1}{n}\sum_{i=1}^{n}x_i^t$ is the average of all local variables. The same convention applies to $\U$, $\V$, $\Z$ and $\Bar{u}^t$, $\Bar{v}^t$, $\Bar{z}^t$. Also denote by $\Bar{\X}^t=\Bar{x}^t\ones^\top=\frac{1}{n}\X^t\ones\ones^\top$ the collection of average of local variables, where $\ones$ is $n$-dimensional all one column vector. The same convention applies to $\bar{\U}$, $\bar{\V}$ and $\bar{\Z}$. We use $\|\cdot\|$ to represent the Euclidean vector norm and matrix Frobenius norm to simplify the notation. For matrix 2 norm (i.e., spectral norm) we use $\|\cdot\|_{2}$.

\subsection{Related works}

\begin{table*}[t!]
\caption{Comparison of D-NASA (Algorithm \ref{algo_decen_normalized_averaged_grad_tracking}) with some widely-used decentralized stochastic nonconvex optimization algorithms: D-SGD~\citep{lian2017can}, D$^2$~\citep{tang2018d} and D-SGT~\citep{koloskova2021improved}. `Other aspt' refers to the additional assumptions required for theoretical convergence (Note that the parameters in the assumptions might not be available to the algorithm), where `Hetero' stands for bounded heterogeneity, and all algorithms require the stochastic bounded variance and the deterministic gradients begin Lipschitz continuous; `Info Required' refers to the problem parameters that the algorithm parameters (such as stepsizes) should depend on to achieve the sample complexity, where ``smoothness" is the Lipschitz constant of the global gradient, and ``variance" is the variance of the stochastic oracle; All algorithms in this table require $\mathcal{O}(n^{-1}\epsilon^{-4})$ oracle calls to achieve an $\epsilon$-stationary point.}
\label{table: comparison}
\begin{center}
\begin{sc}
\begin{tabular}{ccc}
\toprule
\textbf{Algorithm} & \textbf{Other Aspt}  & \textbf{Info Required} \\ %& \textbf{Complexity}\\ 
\toprule
% \textbf{NEXT}* &  L-cont & NA & Asymptotic  \\
\textbf{D-SGD} &  Hetero  & Smoothness, variance \\ %&  $\mathcal{O}(n^{-1}\epsilon^{-4})$ \\ 
\textbf{D$^2$} &  None  & Smoothness, net-topology \\ %&  $\mathcal{O}(n^{-1}\epsilon^{-4})$ \\ 
\textbf{D-SGT} &  None  & Smoothness, net-topology  \\ %&  $\mathcal{O}(n^{-1}\epsilon^{-4})$ \\
\textbf{D-NASA} (ours) & None  & None  \\ %& $\tilde{\mathcal{O}}(n^{-1}\epsilon^{-4})$ \\ 
\bottomrule
\end{tabular}
\end{sc}

%{\red[change NA to 'none'?][smoothness means the smoothness of the entire function? what's the variance here means?]}\jl{I further explained the meaning of smoothness and variance in the caption above}

% \footnotesize{*: We do not compare other popular methods such as NEXT~\citep{di2016next}, which only proves asymptotic convergence under deterministic setting so we do not do the comparison.}

% \footnotesize{$\dagger$: In particular, D-SGD in \cite{lian2017can} requires the information of Lipschitz smoothness parameter and variance of stochastic gradients. D-SGT in \cite{koloskova2021improved} requires the Lipschitz smoothness parameter and $\lambda_2, \lambda_n$ (see Section \ref{sec_method}), which we summarize as 'net-topology'. Our D-NASA (Algorithm \ref{algo_decen_normalized_averaged_grad_tracking}) does not require any problem information to select the algorithm parameters.}

% 'Complexity' represents the number of oracles (stochastic gradients) needed on each agent to obtain an $\epsilon$-stationary point, where $n$ is the number of total agents, and the factor $n^{-1}$ indicates a linear speedup effect.
\end{center}
\end{table*}

\noindent\textbf{Decentralized optimization}
While the study of decentralized optimization algorithms has a long history~\citep{tsitsiklis1984problems, ram2009asynchronous, yan2012distributed, yuan2016convergence}, their distinctive advantages, such as robustness, scalability and privacy preserving, in comparison to centralized setting like~\citet{li2014scaling}, were not well understood both theoretically and empirically until the case study conducted by \citet{lian2017can}. Despite its great success in characterizing the superiority of decentralized training over the centralized setting, the analysis therein replies on a bounded gradient heterogeneity assumption, which was later removed by follow-up works such as D$^2$~\citep{tang2018d}.

Motivated by the empirical success of decentralized training, another line of work focused on improving the convergence rates of decentralized algorithms. Vanilla decentralized gradient descent with a fixed stepsize is known to only converge to a neighborhood of the optimal solution even under the deterministic and strongly convex setting~\citep{yuan2016convergence}. One important technique to mitigate this effect is gradient tracking, which was introduced in control community~\citep{xu2015augmented, di2016next, nedic2017achieving, qu2017harnessing} to improve the convergence rate in the deterministic setting. Later this method was revealed to be helpful to remove the bounded gradient heterogeneity assumption~\citep{zhang2019decentralized, lu2019gnsd, pu2021distributed, koloskova2021improved} in convergence analysis.
A more recent technique of moving-average updates (momentum) have been studied in both decentralized optimization and federated learning setting~\citep{pmlr_v216_xiao23a, cheng2023momentum} to further improve the rate of convergence.

It is worth noticing that the above works all require knowledge about the global problem to design their algorithms. Under the assumption that the local functions are Lipschitz continuous, NEXT~\citep{di2016next} is able to achieve a problem-parameter-free asymptotic convergence (in \textbf{deterministic} setting). We point out again that Lipschitz continuity of the objective functions is a strong assumption that implies boundedness of gradients and bounded heterogeneity (see Section \ref{sec_convergence_dsgt}). % and thus NEXT falls in the same category as our problem-parameter-free analysis for D-SGT in Section \ref{sec_convergence_dsgt}.

Other interesting research topics in decentralized optimization include network topology~\citep{neglia2020decentralized, koloskova2020unified}, communication compression~\citep{tang2018communication, koloskova2019decentralized}, large-model training~\citep{gan2021bagua, yuan2022decentralized}, adaptive algorithms~\citep{chen2023convergence}, to name a few.

% As compared to centralized algorithms, decentralized algorithms are provably preferable in large-scale machine learning problems under certain circumstances due to its robustness, scalability and privacy preserving~\citep{lian2017can}. 

\noindent\textbf{Parameter-free optimization} (Problem-) Parameter-free optimization refers to the algorithms that require no/few information needed from the problem so that the algorithm converges without any tedious process of hyperparameter-tuning. For deterministic smooth optimization, one could show the convergence of gradient descent to either the optimal (convex) or the stationary point (nonconvex) when the stepsize $\eta$ is smaller than $2/L$, where $L$ is the Lipschitz smooth constant~\citep{nesterov2018lectures}. When problem parameters such as $L$ are not available, one usually uses backtracking line-search to determine the stepsize. Recently, there is a line of research initiated by \citet{malitsky2019adaptive} that adaptively estimates the local curvature information in each iteration and does not require the knowledge of $L$. See~\citet{malitsky2023adaptive,  latafat2023convergence, latafat2023adaptive,li2023simple, zhou2024adabb} for more recent works on this subject. Currently, these adaptive methods are for deterministic problems and it remains an interesting direction to extend them to stochastic and decentralized settings.

%{\red[do we need to cite the counterpart of our algorithm in the centralized setting, and provide discussion here?]}\jl{these works are cited in the paragraph below. I also added a short discussion at the end of section 2}

For stochastic gradient descent for solving convex problems, the current convergence result requires either a constant step upper bounded by $1/(2L)$, or a diminishing stepsize $\eta_t=\eta/\sqrt{t}$ with $\eta$ still upper bounded by terms related to $L$~\citep{garrigos2023handbook}. Sufficiently small stepsize guarantees the convergence since the analysis resembles the gradient flow regime, yet this is usually inconsistent with empirical studies, which encourage the stepsize to be large as long as there is no divergence. The stepsize can be chosen up to $10^3$ and $10^4$ in some logistic regression problems (see Section C.2 in \citet{grazzi2020iteration}), which indicates that optimal choices of stepsizes in SGD heavily depend on problem parameters. 

For nonconvex stochastic optimization, various adaptive methods, such as AdaGrad~\citep{duchi2011adaptive,mcmahan2010adaptive}, AMSGrad-Norm~\cite{reddi2019convergence}, NSGD-M~\cite{cutkosky2020momentum}, are proved to be convergent without any knowledge of the parameters~\cite{faw2022power,junchi2023two,hubler2023parameter}, which are thus believed to be more robust algorithms comparing to SGD. Another line of works for the stochastic/online convex optimization is to use the accumulative norm of the stochastic gradient to design adaptive stepsizes~\citep{carmon2022making,ivgi2023dog}. These research results emphasize the optimal dependency on $\|x^0-x^*\|$, i.e., the distance from the initial to the optimal point, and it is not clear how these works adapt to the nonconvex problems. 

Parameter-free stochastic optimization in decentralized setting is unexplored. It is natural to ask whether one can achieve parameter-free decentralized training, given the unique challenges such as communication complexity and heterogeneous data distribution across agents. We provide an affirmative answer in this paper, and in Table \ref{table: comparison} we make the comparison between our Algorithm \ref{algo_decen_normalized_averaged_grad_tracking} and existing well-known algorithms: D-SGD~\citep{lian2017can}, D$^2$~\citep{tang2018d} and D-SGT~\citep{koloskova2021improved}\footnote{We do not compare with NEXT~\citep{di2016next}, which only proves asymptotic convergence under deterministic setting.}. In particular, D-SGD in \cite{lian2017can} requires the information of Lipschitz smoothness parameter and variance of stochastic gradients. D-SGT in \cite{koloskova2021improved} requires the Lipschitz smoothness parameter and $\lambda_2, \lambda_n$ (see Section \ref{sec_method}), which we summarize as `net-topology'. Our D-NASA (Algorithm \ref{algo_decen_normalized_averaged_grad_tracking}) does not require any problem information to select the algorithm parameters.

\section{Methodology}\label{sec_method}
We now present the full methodology of our algorithm. First we recall the decentralized communication topology with a weighted undirected graph $(V,W)$. The vertex set $V=\{1,2,...,n\}$ is the set of local device/nodes, and $W = (W_{i,j})\in\RR^{n\times n}$ is a symmetric doubly stochastic matrix known as weighted adjacency matrix, i.e., $W$ satisfies the following properties: (1) $W_{i, j}\in [0, 1]$, $\forall i,j$; (2) $W_{i, j} = W_{j, i}$, $\forall i, j$, i.e., $W^\top=W$; and (3) $\sum_{j=1}^{n}W_{i, j}=1$, $\forall i$, i.e., $W\ones=\ones$ and $\ones^\top W=\ones^\top$. Intuitively, $W_{i, j}$ represents how well the communication between node $i$ and $j$ is, and $W_{i, j}=0$ if and only if $i$ and $j$ are not communicating. Note that we assume that the eigenvalues of $W$ satisfy $1=\lambda_1>\lambda_2\geq\cdots\geq\lambda_n>-1$, and 
\begin{equation}\label{eq_rho}
    \rho:=\max\{|\lambda_2|, |\lambda_n|\}< 1,
\end{equation}
which is standard in decentralized optimization literature~\citep{lian2017can, tang2018d}. This ensures the communication graph is strongly connected, and after each round of communication with neighbors, the consensus error (i.e., $\sum_{i=1}^{n}\norm{a_i - \bar a}^2$ where $a_i$ is a vector owned by the $i$-th agent only) decreases at a controllable rate. We assume $W$ satisfies the above properties throughout the paper and thus will not explicitly state them in the theorems.

% For network topology we have the following assumption.
% \begin{assumption}\label{assump_network_topology}
%     Suppose the weighted adjacency matrix $W=(W_{i,j})\in \RR^{n\times n}$ associated with the communication network is symmetric doubly stochastic, i.e., $W^\top = W,\ W\mathbf{1} = \mathbf{1}$. Denote by $\lambda_n\leq ...\leq \lambda_1$ all eigenvalues of $W$, and $\rho:=\max\{|\lambda_2|, |\lambda_n|\} < 1$.
% \end{assumption}

Now we recall the decentralized stochastic gradient tracking (D-SGT)~\citep{zhang2019decentralized, lu2019gnsd, pu2021distributed, koloskova2021improved} in Algorithm \ref{algo_decen_grad_tracking}. The algorithm takes a gradient step at each local node, keeps a tracker $u_i^{t}$ to approximate the global stochastic gradient, and executes a communication round in each iteration to achieve consensus among agents. A simple arithmetic verification shows that $\Bar{u}^t=\Bar{v}^t$ for all iteration number $t>0$. This key mechanism guarantees that the averaged gradient tracker $\Bar{u}^t$ is close to full gradient $\nabla f$ provided the consensus error $\sum_{i=1}^{n}\norm{x_i^t - \bar x^t}^2$ is small. % and as the consensus error $\sum_{i=1}^{n}\norm{u_i^t - \bar u^t}^2$ decreases, each local tracker $u_i^t$ is a good approximation of the global gradient $\nabla f$.
However, as we will show in Section \ref{sec_convergence}, this popular D-SGT algorithm cannot achieve a problem-parameter-free convergence with linear speedup even when we assume that the local functions are Lipschitz continuous, i.e., their gradients are bounded. We primarily use D-SGT to showcase the difficulties of applying these algorithms to modern machine learning applications, as we essentially still need to tune the algorithm parameters for a better performance in distributed training, which is largely impossible~\cite{yuan2022decentralized}.

\begin{algorithm}[!ht]
\caption{Decentralized stochastic gradient tracking (D-SGT)}\label{algo_decen_grad_tracking}
\begin{algorithmic}[1]
    \STATE {\bfseries Input:} $T$, $\{\eta_t\}$, $u_i^0=v_i^{0}=\nabla F_i(x_i^0,\xi_i^{0})$
    \STATE {\bfseries Output:} $\Tilde{x}=x^{T}$ or uniformly from $\{x^1,...,x^T\}$
    \FOR{$t=0,...,T-1$}
        \FOR{each node $i=1,...,n$ (in parallel)}
            \STATE $x_i^{t+1} \leftarrow \sum_{j=1}^{n} W_{i,j}(x_j^t - \eta_t u_j^t),$ \\
            \STATE $ v_i^{t+1}\leftarrow \nabla F_i(x_i^{t+1},\xi_i^{t+1})$ \\
            \STATE $u_i^{t+1} \leftarrow \sum_{j=1}^{n} W_{i,j} u_j^{t} + v_i^{t+1} - v_i^{t}$ \\
        \ENDFOR
    \ENDFOR
\end{algorithmic}
\end{algorithm}

To overcome this obstacle, we propose decentralized normalized averaged stochastic approximation (D-NASA) as in Algorithm \ref{algo_decen_normalized_averaged_grad_tracking}, where we maintain $z_i^t$ as a moving-average update of the tracker $u_i^t$ and then utilize the normalized direction $z_i^t / \norm{z_i^t}$ to update $x_i^t$. Another difference of D-NASA is that we update all the local operations and communicate at the end, simply for the ease of analysis. The moving-average technique, also known as momentum method, was recently introduced to distributed optimization and proven to mitigate client drift in federated learning~\citep{cheng2023momentum} and achieve linear speedup in decentralized composite optimization~\citep{pmlr_v216_xiao23a}. In Section \ref{sec_convergence}, our theory reveals that the normalized direction coupled with the moving-average provably achieves parameter-free decentralized optimization. The combination of normalization and moving-average was explored in \citet{cutkosky2020momentum,hubler2023parameter}, yet it is unclear and highly non-trivial to understand if one can achieve parameter-free convergence when each of the local node is normalized only by its local norm of gradients.

\begin{algorithm}[!ht]
 \caption{Decentralized normalized averaged stochastic approximation (D-NASA)}
 \label{algo_decen_normalized_averaged_grad_tracking}
\begin{algorithmic}[1]
    \STATE {\bfseries Input: } $T$, $\{\eta_t\}$, $\{\alpha_t\}$, $x_i^0=z_i^0=v_i^{0}=0$
    \STATE {\bfseries Output: } $\Tilde{x}=x^{T}$ or uniformly from $\{x^1,...,x^T\}$
    \FOR{$t=0,...,T-1$}
        \FOR{each node $i=1,...,n$ (in parallel)}
            \STATE $ \Tilde{x}_i^{t+1} \leftarrow x_i^t - {\frac{\eta_t}{\norm{z_i^t}}} z_i^t$ \\
            \STATE $ v_i^{t+1}\leftarrow \nabla F_i(x_i^{t},\xi_i^{t})$\\
            \STATE $\Tilde{u}_i^{t+1} \leftarrow u_i^{t} + v_i^{t+1} - v_i^{t}$ \\
            \STATE $\Tilde{z}_i^{t+1} \leftarrow (1-\alpha_t) z_i^t + \alpha_t \Tilde{u}_i^{t+1}$
        \ENDFOR
        
        \texttt{\# Communication} \\
        \STATE $[x_1^{t+1},..., x_n^{t+1}]\leftarrow [\Tilde{x}_1^{t+1},..., \Tilde{x}_n^{t+1}] W$ \\
        \STATE $[u_1^{t+1},..., u_n^{t+1}]\leftarrow [\Tilde{u}_1^{t+1},..., \Tilde{u}_n^{t+1}] W$ \\
        \STATE $[z_1^{t+1},..., z_n^{t+1}]\leftarrow [\Tilde{z}_1^{t+1},..., \Tilde{z}_n^{t+1}] W$ \\
    \ENDFOR
\end{algorithmic}
\end{algorithm}

\section{Convergence analysis}\label{sec_convergence}
In this section we analyze the convergence properties of our algorithms. We have the following standard assumptions for our theoretical analysis of Algorithm \ref{algo_decen_grad_tracking} and \ref{algo_decen_normalized_averaged_grad_tracking}.

\begin{assumption}\label{assump_l_smooth}
    The function $f_i$ is $L_i$-Lipschitz smooth, i.e.
    $$
    \|\nabla f_i(x) - \nabla f_i(y)\| \leq L_i\|x - y\|.
    $$
    As a result, $f$ is $L$-Lipschitz smooth with $L=\frac{1}{n}\sum_i L_i$.
\end{assumption}

Next, we also have the following standard assumption on the mean and variance of each local gradient estimator. Denote the filtration generated by the random variables sampled upon the $t$-th iteration as $\mathcal{F}_t$, i.e. $\mathcal{F}_0=\{\emptyset,\Omega\}$ and 
$$
    \mathcal{F}_t:=\sigma(\xi_i^k|i=1,...,n,\ k=0,...,t), \ \forall t\geq 1
$$
where $\sigma$ is the $\sigma$-algebra generated by the random variables.
\begin{assumption}\label{assump_bdd_variance}
    The stochastic gradient estimator is unbiased and with bounded variance, i.e.,
    \begin{align*}
        &\E_{\xi_i}[\nabla F_i(x, \xi_i)] = \nabla f_i(x),\\ &\E_{\xi_i}\|\nabla F_i(x, \xi_i) - \nabla f_i(x)\|^2\leq \sigma^2.
    \end{align*}
    Moreover, we assume $\{\xi_i^{t+1}:i=1,...,n\}$ are independent given $\mathcal F_t$. % so that we can take the expectation of $\xi_i^{t+1}$ independently.
\end{assumption}
Note that Assumption \ref{assump_bdd_variance} is only imposed on each local stochastic function, and does not imply any bound for the difference between local and global functions. We now define the notion of stationarity for this paper.
\begin{definition}
    For any $\epsilon>0$, we say an algorithm finds an $\epsilon$-stationary point, if an output sequence $\{\bar x^{t}\}_{t=0}^{T}$ generated by the algorithm satisfies 
    \begin{align*}
        \frac{1}{T}\sum_{t=0}^{T-1}\E\|\nabla f(\Bar{x}^t)\|\leq\epsilon.
    \end{align*}
\end{definition}
%\jl{added the notion of linear speedup here:}
We say that an algorithm achieves \textbf{linear speedup} if it takes $T\propto n^{-1}$ oracles calls at each node to achieve an $\epsilon$-stationary point. % {\red[let's discuss. I think this is a bit too specific? e.g., why $\epsilon^{-4}$?]} \jl{$\epsilon^{-4}$ is the lower bound for stochastic nonconvex optimization based on Carmon's work. But I think here maybe we just need to mention a $n^{-1}$ dependency? I've modified correspondingly}

\subsection{Parameter-free convergence theory for D-SGT}\label{sec_convergence_dsgt}
We first show the convergence analysis of the D-SGT algorithm in which the learning rate does not depend on problem parameters. However, this convergence result requires the following Lipschitz continuity assumption on functions $f_i$.
\begin{assumption}\label{assump_l_continuous}
    The function $f_i$ is $G_i$-Lipschitz continuous, i.e.
    $$
    \|f_i(x) - f_i(y)\| \leq G_i\|x - y\|.
    $$
    As a result, $f$ is $G$-Lipschitz continuous with $G=\frac{1}{n}\sum_i G_i$.
\end{assumption}
Note that Assumption \ref{assump_l_continuous} is only used in the parameter-free convergence analysis for the D-SGT algorithm (Algorithm \ref{algo_decen_grad_tracking}). This is a very strong assumption since in convex optimization, Lipschitz continuity of the objective functions (or bounded subgradient) can readily give a parameter-independent convergence result by taking the stepsize to be $\mathcal{O}(1/\sqrt{t})$~\citep{boyd2003subgradient}. Moreover, it implies that each function $f_i$ has bounded gradients, i.e., $\norm{\nabla f_i(x)}\leq G_i$, which further indicates the bounded heterogeneity condition since
$\norm{\nabla f_i(x) - \nabla f(x)}\leq \norm{\nabla f_i(x)} + \frac{1}{n}\sum_{j=1}^{n}\norm{\nabla f_j(x)}\leq G_i + G.$

We point out that, even under such a strong assumption, we are not able to show a linear speed up effect for D-SGT. Specifically, we have the following theorem.
\begin{theorem}\label{thm_fix_step_dsgt}
    Suppose Assumptions \ref{assump_l_smooth}, \ref{assump_bdd_variance} and \ref{assump_l_continuous} hold, also take $\eta_t=\eta T^{-1/2}$ (constant) or $\eta_t=\eta t^{-1/2}$ (diminishing, $\eta_0=0$ for this case) for $\eta>0$, the update of Algorithm \ref{algo_decen_grad_tracking} satisfies:
    \begin{align*}
        &\frac{1}{T} \sum_{t=0}^{T-1}\E\|\nabla f(\bar{x}^t)\|^2\\& \leq \tilde{\mathcal{O}}\bigg( \frac{\Delta_0/\eta + (L\sigma^2/n+L G^2)\eta}{\sqrt{T}} + \frac{\Tilde{\rho} L^2 \eta^2}{T}(\sigma^2+G^2) \bigg).
    \end{align*}
    Here $\Tilde{\rho}>0$ is a parameter dependent on $\rho$ in \eqref{eq_rho}, $\Delta_0=f(\Bar{x}^0)-f^*$ is the initial function value gap and we omit higher-order and logarithmic terms in $\tilde{\mathcal{O}}$.
\end{theorem}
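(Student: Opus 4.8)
The plan is to follow the standard descent-lemma-plus-consensus-error template for gradient-tracking methods, but carefully tracking how the parameter-free stepsize $\eta_t \asymp 1/\sqrt{T}$ (or $1/\sqrt{t}$) interacts with the unknown constants $L$ and $\rho$. First I would apply the $L$-smoothness of $f$ (Assumption \ref{assump_l_smooth}) along the averaged iterate sequence $\bar x^t$. Since averaging the local updates in Algorithm \ref{algo_decen_grad_tracking} gives $\bar x^{t+1} = \bar x^t - \eta_t \bar u^t$ and $\bar u^t = \bar v^t = \frac{1}{n}\sum_i \nabla F_i(x_i^t,\xi_i^t)$, the descent inequality reads
\begin{align*}
\E f(\bar x^{t+1}) \le \E f(\bar x^t) - \eta_t \E\langle \nabla f(\bar x^t), \bar u^t\rangle + \tfrac{L\eta_t^2}{2}\E\|\bar u^t\|^2.
\end{align*}
I would then split the cross term: conditioned on $\mathcal{F}_t$, $\E[\bar u^t] = \frac{1}{n}\sum_i \nabla f_i(x_i^t)$, which differs from $\nabla f(\bar x^t) = \frac{1}{n}\sum_i \nabla f_i(\bar x^t)$ by a term controlled by $L$ times the consensus error $\sqrt{\sum_i \|x_i^t - \bar x^t\|^2}$. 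Completing the square produces the main $-\frac{\eta_t}{2}\|\nabla f(\bar x^t)\|^2$ term, a consensus-error term $\frac{L^2\eta_t}{2n}\sum_i\|x_i^t-\bar x^t\|^2$, and a variance term $\frac{L\eta_t^2}{2}(\sigma^2/n + \text{bounded-gradient terms})$, where Assumption \ref{assump_l_continuous} lets me bound $\|\nabla f_i(x_i^t)\|\le G_i$ when needed.

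The core of the argument is bounding the accumulated consensus errors $\sum_t \eta_t \sum_i \|x_i^t - \bar x^t\|^2$ and the analogous quantity for the gradient tracker $\sum_i \|u_i^t - \bar u^t\|^2$. Using the mixing contraction $\|\X W - \bar\X\| \le \rho\|\X - \bar\X\|$ applied to the recursions $\X^{t+1} = W(\X^t - \eta_t\U^t)$ and $\U^{t+1} = W\U^t + \V^{t+1} - \V^t$, I would set up a coupled linear recursion between the two consensus errors. The key point making the parameter-free choice work is that $\eta_t$ is small and (in the diminishing case) monotone, so after summing the geometric series in $\rho$ one gets $\sum_t \sum_i\|x_i^t-\bar x^t\|^2 \lesssim \tilde\rho\sum_t \eta_t^2 (\sigma^2 + G^2)$ where $\tilde\rho$ collects factors like $1/(1-\rho)^2$; here the bounded-gradient assumption is essential to bound the tracker increments $\|v_i^{t+1}-v_i^t\|$ by $O(G_i + \sigma)$ in expectation without reference to $\nabla f(\bar x^t)$, which would otherwise create a circular dependence. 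Plugging $\eta_t = \eta/\sqrt{T}$ gives $\sum_t\eta_t^2 = \eta^2$, so the consensus contribution to the final bound is $O(\tilde\rho L^2\eta^2(\sigma^2+G^2)/T)$ after dividing by $T$, matching the stated last term.

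Finally I would telescope the descent inequality from $t=0$ to $T-1$, divide through by $\sum_t \eta_t/2 \asymp \sqrt{T}\cdot\eta$ (or $\sqrt{T}/\eta$ in the normalization of the statement), and substitute the consensus bound. The $\Delta_0/\eta$ term comes from the telescoped $f(\bar x^0) - f^* $ divided by $\sum_t\eta_t$; the $(L\sigma^2/n + LG^2)\eta/\sqrt{T}$ term comes from $\sum_t \frac{L\eta_t^2}{2}(\sigma^2/n + G^2)$ divided by $\sum_t\eta_t$; and the consensus term contributes the $O(\tilde\rho L^2\eta^2/T)$ piece. The diminishing case $\eta_t = \eta/\sqrt t$ is handled identically using $\sum_{t=1}^{T}1/t = O(\log T)$ and $\sum_{t=1}^T 1/\sqrt t = \Theta(\sqrt T)$, which is where the suppressed logarithmic factors in $\tilde{\mathcal{O}}$ enter. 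I expect the main obstacle to be the bookkeeping in the coupled consensus recursion — specifically, ensuring that the tracker error bound does not reintroduce a dependence on $\|\nabla f(\bar x^t)\|$ (which is exactly why Assumption \ref{assump_l_continuous} is invoked and why, as the theorem notes, linear speedup is lost: the $G^2$ term in the consensus bound does not shrink with $n$).
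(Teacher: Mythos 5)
Your proposal follows essentially the same route as the paper: an $L$-smoothness descent inequality along $\bar{x}^t$ with the cross term split via $h^t=\frac{1}{n}\sum_i\nabla f_i(x_i^t)$, a coupled consensus-error recursion for $\X^t$ and $\U^t$ driven by the mixing contraction, the tracker increments $\V^{t+1}-\V^t$ bounded by $O(\sigma^2+G^2)$ using Assumption \ref{assump_l_continuous}, and a final telescoping with $\sum_t\eta_t^2=\eta^2$ (resp.\ $O(\eta^2\log T)$). The decomposition, the role of $G$, and the resulting loss of linear speedup all match the paper's argument, so the proposal is correct and not materially different.
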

\begin{remark}
    The rate of $\mathcal{O}(1/\sqrt{T})$ matches the lower bound for nonconvex stochastic optimization~\cite{arjevani2023lower}, yet it is worth noticing that we are not able to choose the parameter $\eta$ to achieve a linear speedup effect (even if we have access to $n$, the number of nodes), due to the term related to $G$. We also remind the reader that if we assume the access of Lipschitz smooth constant $L$, one can achieve linear speedup for D-SGT as in~\citet{zhang2019decentralized, xin2021improved, koloskova2021improved}. This motivates the design of new algorithms that can achieve linear speedup under problem-parameter-free setting for decentralized optimization, without the restrictive Assumption \ref{assump_l_continuous}.
\end{remark}

\subsection{Parameter-free convergence theory for D-NASA}\label{sec_convergence_dnasa}
Now we analyze the convergence of D-NASA (Algorithm \ref{algo_decen_normalized_averaged_grad_tracking}). Similar to the result for D-SGT, we provide both the result for fixed and diminishing stepsizes. Our analysis depends on a key observation over the control of the consecutive consensus error. By the update of Algorithm \ref{algo_decen_normalized_averaged_grad_tracking} one can get:
\begin{align*}
    &\|\X^{t+1} - \Bar{\X}^{t+1}\|^2\\
    &\leq \frac{1+\rho}{2}\|\X^{t} - \Bar{\X}^{t}\|^2 + \eta_t^2 \frac{1+\rho^2}{1-\rho^2}\|\hat{\Z}^{t} - \Bar{\hat{\Z}}^t\|^2
\end{align*}
where $\hat{\Z}^t:=\left[\frac{z_1^t}{\|z_1^t\|},...,\frac{z_n^t}{\|z_n^t\|}\right]$ is the collection of column vectors of normalized $z_i^t$. Now the key observation is that the consensus error of $\hat{\Z}^t$ is always bounded: 
\begin{align*}
    \|\hat{\Z}^{t} - \Bar{\hat{\Z}}^t\|^2 =\sum_{i=1}^{n}\left\| \frac{z_i^t}{\|z_i^t\|} - \frac{1}{n}\sum_{i=1}^{n}\frac{z_i^t}{\|z_i^t\|} \right\|^2 \leq n.
\end{align*}
Therefore the consecutive consensus error for $\X$ becomes:
\begin{align*}
    \frac{1}{n}\|\X^{t+1} - \Bar{\X}^{t+1}\|^2
    \leq \frac{1+\rho}{2}\frac{1}{n}\|\X^{t} - \Bar{\X}^{t}\|^2 +  \frac{1+\rho^2}{1-\rho^2} \eta_t^2
\end{align*}
and the cumulative consensus error is controlled directly by our stepsize choice $\eta_t$. This indicates that a careful stepsize choice will result in a bounded consensus error, regardless of any problem parameter. Now we state the convergence result for a fixed stepsize as follows.  
\begin{theorem}\label{thm_fix_step}
    Suppose Assumptions \ref{assump_l_smooth} and \ref{assump_bdd_variance} hold and we take $\alpha_t = \sqrt{n/T}$ and $\eta_t=n^{1/4}/ T^{3/4}$ in Algorithm \ref{algo_decen_normalized_averaged_grad_tracking}. The following bounds hold:
    \begin{align*}
        \frac{1}{T} \sum_{t=0}^{T-1}&\E\|\nabla f(\bar{x}^t)\| \leq \mathcal{O}\bigg( \frac{\Delta_0 + L + \sigma}{n^{1/4}T^{1/4}} + \frac{\Tilde{\rho}^2(\sigma+L)n^{1/2}}{T^{1/2}} \bigg), \\
        \frac{1}{T} \sum_{t=0}^{T-1}&\E\|\Bar{z}^t - \nabla f(\Bar{x}^t)\| \leq \mathcal{O}\bigg( \frac{L+\sigma}{n^{1/4} T^{1/4}} + L\Tilde{\rho}\frac{n^{1/4}}{T^{1/2}} \bigg), \\
        \frac{1}{T} \sum_{t=0}^{T-1}&\frac{1}{n}\left[\|\X^{t} - \Bar{\X}^{t}\|^2 + \|\Z^{t} - \Bar{\Z}^{t}\|^2\right] \\
        &\leq \mathcal{O}\bigg( \Tilde{\rho}\frac{n^{1/4}}{T^{1/2}}+ \Tilde{\rho}^2( \sigma^2+ L^2)\frac{n^2}{T} \bigg).
    \end{align*}
    Here $\Tilde{\rho}>0$ is a parameter dependent on $\rho$ in \eqref{eq_rho}, $\Delta_0=f(\Bar{x}^0)-f^*$ is the initial function value gap and we omit higher-order terms in $\mathcal{O}$. Note that the above three bounds correspond to stationarity, approximation to gradient and consensus errors.
\end{theorem}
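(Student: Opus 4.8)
\noindent The plan is to run a coupled-recursion (Lyapunov-type) argument over four quantities and to exploit the fact that, because of the per-node normalization, these quantities form a \emph{directed, non-circular} chain rather than a mutually coupled system. The quantities are the $x$-consensus error $\tfrac1n\|\X^t-\bar\X^t\|^2$, the $z$-consensus error $\tfrac1n\|\Z^t-\bar\Z^t\|^2$, the tracking error of the average iterate $\delta_t:=\bar z^t-\nabla f(\bar x^t)$, and the descent of $f(\bar x^t)$. The chain is: the $x$-consensus recursion closes on its own (it only ever sees $\|\hat{\Z}^t-\bar{\hat{\Z}}^t\|^2\le n$); this feeds a bound on the tracker consensus $\tfrac1n\|\U^t-\bar\U^t\|^2$, which closes the $z$-consensus recursion; the $x$-consensus also feeds the recursion for $\delta_t$, closing the tracking error; and the last two feed a descent inequality, closing the stationarity bound. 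Two facts are used throughout: $\bar x^{t+1}=\bar x^t-\eta_t\bar d^t$ with $\bar d^t:=\tfrac1n\sum_i z_i^t/\|z_i^t\|$, so $\|\bar d^t\|\le1$ (with the convention $z_i^0/\|z_i^0\|:=0$ for the degenerate first step), and, by a short induction exactly as for D-SGT, $\bar u^t=\bar v^t$, so $\bar z^{t+1}=(1-\alpha_t)\bar z^t+\alpha_t\bar v^{t+1}$ with $\bar v^{t+1}=\tfrac1n\sum_i\nabla F_i(x_i^t,\xi_i^t)$.

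\noindent\emph{Descent step and the first bound.} The only computation that differs from a standard gradient-tracking descent is the elementary inequality $\langle a,\,b/\|b\|\rangle\ge\|a\|-2\|a-b\|$, valid for all $a$ and all $b$ (and, under the convention above, for $b=0$). Applying it with $a=\nabla f(\bar x^t)$, $b=z_i^t$, averaging over $i$, splitting $\|\nabla f(\bar x^t)-z_i^t\|\le\|\delta_t\|+\|\bar z^t-z_i^t\|$ and using Cauchy--Schwarz, the $L$-smoothness of $f$ gives
\[
f(\bar x^{t+1})\ \le\ f(\bar x^t)-\eta_t\|\nabla f(\bar x^t)\|+2\eta_t\|\delta_t\|+\frac{2\eta_t}{\sqrt n}\|\Z^t-\bar\Z^t\|+\frac{L\eta_t^2}{2}.
\]
Telescoping over $t=0,\dots,T-1$, dividing by $\eta T$, taking expectations and applying Jensen reduces $\tfrac1T\sum_t\E\|\nabla f(\bar x^t)\|$ to $\Delta_0/(\eta T)+L\eta/2$ plus the root-mean-square tracking-error and $\Z$-consensus bounds; with $\eta_t=n^{1/4}T^{-3/4}$ the first piece is $\O(\Delta_0 n^{-1/4}T^{-1/4})$ and the $L\eta$ term is higher order, so once the other two bounds are in hand the first displayed bound follows.

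\noindent\emph{Consensus errors.} The $x$-consensus recursion is the one displayed in the text; since $\X^0=\bar\X^0$ and $\|\hat{\Z}^t-\bar{\hat{\Z}}^t\|^2\le n$, unrolling the geometric recursion gives $\tfrac1n\|\X^t-\bar\X^t\|^2\le\O(\tilde{\rho}\,\eta_t^2)$ uniformly in $t$. For the $z$-consensus I first control the tracker: $\U^{t+1}=\U^tW+\V^{t+1}-\V^t$ gives $\U^{t+1}-\bar\U^{t+1}=(\U^t-\bar\U^t)W+\mathcal D^t$, where $\mathcal D^t$ is the recentred increment of $\V^{t+1}-\V^t$; the crucial point is that this is an increment between \emph{consecutive} stochastic gradients, so the (unbounded) heterogeneity $\nabla f_i(\bar x^t)-\nabla f(\bar x^t)$ cancels, and using the $x$-consensus bound together with $\sum_i\|x_i^{t+1}-x_i^t\|^2\lesssim\|\X^t-\bar\X^t\|^2+n\eta_t^2$ one gets $\E\|\mathcal D^t\|^2\lesssim n(\sigma^2+L^2\eta_t^2)$. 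Since $W$ contracts by $\rho$ on $\ones^{\perp}$, unrolling yields $\tfrac1n\E\|\U^t-\bar\U^t\|^2\lesssim\tilde{\rho}(\sigma^2+L^2\eta_t^2)$; then the recursion $\|\Z^{t+1}-\bar\Z^{t+1}\|\le\rho(1-\alpha_t)\|\Z^t-\bar\Z^t\|+\rho\alpha_t\|\tilde\U^{t+1}-\bar{\tilde\U}^{t+1}\|$, with $\tilde\U^{t+1}-\bar{\tilde\U}^{t+1}=(\U^t-\bar\U^t)+\mathcal D^t$, gives $\tfrac1n\E\|\Z^t-\bar\Z^t\|^2\lesssim\tilde{\rho}^2\alpha_t^2(\sigma^2+L^2\eta_t^2)$. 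Inserting $\alpha_t=\sqrt{n/T}$, $\eta_t=n^{1/4}T^{-3/4}$ and averaging over $t$ gives the third displayed bound.

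\noindent\emph{Tracking error, and the main obstacle.} Writing $\delta_{t+1}=(1-\alpha_t)\delta_t+\big(\nabla f(\bar x^t)-\nabla f(\bar x^{t+1})\big)+\alpha_t\big(\bar g^t-\nabla f(\bar x^t)\big)+\alpha_t\big(\bar v^{t+1}-\bar g^t\big)$ with $\bar g^t:=\tfrac1n\sum_i\nabla f_i(x_i^t)$, the last term is mean-zero given $\mathcal F_{t-1}$ with second moment $\le\alpha_t^2\sigma^2/n$ (the averaging over the $n$ independent local estimators is exactly where the linear speedup enters), the drift term is $\le L\eta_t$, and the bias term is $\le\alpha_t\tfrac{L}{\sqrt n}\|\X^t-\bar\X^t\|$ by $L$-smoothness of the $f_i$. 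A Young-inequality expansion of $\E\|\delta_{t+1}\|^2$ produces the contraction $\E\|\delta_{t+1}\|^2\le(1-\alpha_t)\E\|\delta_t\|^2+\O(L^2\eta_t^2/\alpha_t)+\O\big(\alpha_tL^2\cdot\tfrac1n\|\X^t-\bar\X^t\|^2\big)+\alpha_t^2\sigma^2/n$; unrolling, averaging, inserting the $x$-consensus bound and the stepsizes (and noting $\delta_0=-\nabla f(0)$ with $\|\delta_0\|^2\le2L\Delta_0$ is a higher-order transient) gives $\tfrac1T\sum_t\E\|\delta_t\|^2=\O\big((L^2+\sigma^2)n^{-1/2}T^{-1/2}\big)$, hence the second displayed bound via Jensen; substituting it and the $\Z$-consensus bound back into the descent estimate finishes the first. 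I expect the genuinely delicate step to be the descent/tracking pair under the per-node normalization: $\bar d^t$ is neither a unit vector nor proportional to $\bar z^t$, so the usual tracking descent breaks, and it is precisely the inequality $\langle a,b/\|b\|\rangle\ge\|a\|-2\|a-b\|$ together with the uniform bound $\|\hat{\Z}^t-\bar{\hat{\Z}}^t\|^2\le n$ that repair it — the latter also being what makes the $x$-consensus recursion parameter-free and breaks the otherwise-circular dependence among the four estimates; the ill-defined direction $0/\|0\|$ at $t=0$ is harmless under the stated convention.
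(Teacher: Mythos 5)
Your proposal is correct and follows essentially the same architecture as the paper's own proof: the same four-quantity chain, the same key observation that $\|\hat{\Z}^t-\bar{\hat{\Z}}^t\|^2\le n$ makes the $x$-consensus recursion parameter-free, and the same $\V^{t+1}-\V^t$ increment argument (where the heterogeneity cancels) to close the $\U$- and $\Z$-consensus bounds. The only, harmless, deviations are in the descent step --- you apply the per-node inequality $\langle a,b/\|b\|\rangle\ge\|a\|-2\|a-b\|$ and average, whereas the paper isolates $-\|\bar z^t\|$ and bounds $\|\bar z^t\|\,\big\|\tfrac1n\sum_i z_i^t/\|z_i^t\|-\bar z^t/\|\bar z^t\|\big\|\le\tfrac1n\sum_i\|z_i^t-\bar z^t\|$ --- and in the tracking error, where you run a second-moment contraction plus Jensen while the paper unrolls the first-moment recursion and controls the noise sum with the martingale square-root trick; both routes give the same $\mathcal{O}\big((L+\sigma)n^{-1/4}T^{-1/4}\big)$ rates.
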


\begin{remark}\label{rmk_fix_step_linear_speedup}
    To make $1/T\sum_{t=0}^{T-1}\E\|\nabla f(\Bar{x}^t)\|\leq\epsilon$, we need $T=\tilde{\mathcal{O}}(1/(n\epsilon^4))$, which matches the lower bounds as in~\citet{lu2021optimal, arjevani2023lower}, and also indicates the linear speedup effect \citep{lian2017can}. Note that the approximation error $\|\Bar{z}^t-\nabla f(\Bar{x}^t)\|$ also enjoys linear speedup effect. Readers might realize that this choice of parameter requires prior knowledge of the total number of nodes. We presume that it is impossible to achieve linear speedup if we are using none of the problem information. Moreover, this choice of algorithm parameters still does not require global information about the loss function or the topological information about the communication graph, thus it is better than existing algorithms in the literature in decentralized optimization, as we have presented in Table \ref{table: comparison}.
\end{remark}

To free the algorithm parameters even from the total number of iterations $T$, we also present the result when we do not fix the total number of iterations in advance and the stepsize will be diminishing in Theorem \ref{thm_dim_step}. 
\begin{theorem}\label{thm_dim_step}
    Suppose Assumptions \ref{assump_l_smooth} and \ref{assump_bdd_variance} hold, also take $\alpha_t = \sqrt{n/t}$ and $\eta_t=n^{1/4}/ t^{3/4}$ for any $t$ (take $\eta_0=\alpha_0=0$), the update of Algorithm \ref{algo_decen_normalized_averaged_grad_tracking} satisfies: 
    \begin{equation*}
    \begin{split}
        \frac{1}{T} \sum_{t=0}^{T-1}&\E\|\nabla f(\bar{x}^t)\| \leq \Tilde{\mathcal{O}}\bigg(\frac{\Delta_0+L+\sigma}{n^{1/4}T^{1/4}}\\
        &+\frac{ L\Tilde{\rho}n^{1/4} + \Tilde{\rho}^2(\sigma+L)n^{1/2}+L\Tilde{\rho}^3 n^{3/4}}{T^{1/4}}\bigg), \\
        \frac{1}{T} \sum_{t=0}^{T-1}&\E\|\Bar{z}^t - \nabla f(\Bar{x}^t)\| \leq \Tilde{\mathcal{O}}\bigg( \frac{L+\sigma+L\Tilde{\rho}n^{1/2}}{n^{1/4} T^{1/4}} \bigg), \\
        \frac{1}{T} \sum_{t=0}^{T-1}&\frac{1}{n}\E\left[\|\X^{t} - \Bar{\X}^{t}\|^2 + \|\Z^{t} - \Bar{\Z}^{t}\|^2\right] \\
        &\leq \mathcal{O}\bigg(\frac{\Tilde{\rho}^2(\sigma^2+ L^2 + \Tilde{\rho} n^{1/2})}{T}\bigg),
    \end{split}
    \end{equation*}
    where $\Tilde{\rho}$ and $\Delta_0$ are the same as Theorem \ref{thm_fix_step} and we omit logarithmic factors in $\Tilde{\mathcal{O}}$.
\end{theorem}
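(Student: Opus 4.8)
The plan is to run the same bookkeeping as behind Theorem~\ref{thm_fix_step}, but to keep every per‑iteration estimate as a time‑varying recursion, so that the diminishing choices $\eta_t=n^{1/4}t^{-3/4}$ and $\alpha_t=\sqrt{n/t}$ enter only at the end through a handful of elementary summation facts: $\sum_{t=1}^{T}\eta_t=\Theta(n^{1/4}T^{1/4})$, $\sum_{t\ge1}\eta_t^2=\Theta(n^{1/2})$, $\prod_{l=s}^{t}(1-\alpha_l)\le\exp(-\sum_{l=s}^{t}\alpha_l)$, and convolution bounds of the type $\sum_{s\le t}\rho^{t-s}\,\eta_s\lesssim\tilde\rho\,\eta_t$ (valid because $\eta_s$ is polynomially decaying). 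First I would record the averaged dynamics. Since $W$ is doubly stochastic the communication steps preserve means, so $\bar x^{t+1}=\bar x^t-\eta_t d^t$ with $d^t:=\frac1n\sum_i z_i^t/\|z_i^t\|$ and $\|d^t\|\le1$, while $\bar z^{t+1}=(1-\alpha_t)\bar z^t+\alpha_t\bar u^{t+1}$ with $\bar u^{t+1}=\bar v^{t+1}=\frac1n\sum_i\nabla F_i(x_i^t,\xi_i^t)$ (the last identity by telescoping $\bar u^{t+1}=\bar u^t+\bar v^{t+1}-\bar v^t$ with the natural initialization $u_i^0=0$).

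Next I would set up the three consensus recursions in the order that makes them decouple. (i) For $\X$, the displayed inequality just before Theorem~\ref{thm_fix_step} together with $\|\hat\Z^t-\bar{\hat\Z}^t\|^2\le n$ gives $\frac1n\|\X^{t+1}-\bar\X^{t+1}\|^2\le\frac{1+\rho}{2}\cdot\frac1n\|\X^t-\bar\X^t\|^2+\frac{1+\rho^2}{1-\rho^2}\eta_t^2$; unrolling the geometric recursion and summing yields $\sum_t\frac1n\E\|\X^t-\bar\X^t\|^2=\mathcal O(\tilde\rho^2\sum_t\eta_t^2)=\mathcal O(\tilde\rho^2 n^{1/2})$ — this is the step where normalization pays off, the driving term being purely the stepsize. (ii) For $\U$, using $\U^{t+1}-\bar\U^{t+1}=\big((\U^t-\bar\U^t)+(\V^{t+1}-\V^t)-\overline{\V^{t+1}-\V^t}\big)W$ and $\|(\cdot)W\|\le\rho\|(\cdot)\|$, I would bound $\E\|\V^{t+1}-\V^t\|^2\lesssim n\sigma^2+L^2\E\|\X^{t-1}-\bar\X^{t-1}\|^2+nL^2\eta_{t-1}^2$, where the estimate of $\|x_i^t-x_i^{t-1}\|$ uses $\|\sum_jW_{ij}\hat z_j^{t-1}\|\le1$ and $\|\X^{t-1}(W-I)\|_2\le2$, and then unroll to get $\E\|\U^t-\bar\U^t\|^2=\mathcal O(\tilde\rho n\sigma^2)+(\text{summable terms})$. (iii) For $\Z$, since $\Z^{t+1}=(1-\alpha_t)\Z^tW+\alpha_t\U^{t+1}$, convexity of $\|\cdot\|^2$ gives $\|\Z^{t+1}-\bar\Z^{t+1}\|^2\le(1-\alpha_t)\rho^2\|\Z^t-\bar\Z^t\|^2+\alpha_t\|\U^{t+1}-\bar\U^{t+1}\|^2$; the contraction factor $\to\rho^2<1$ survives as $\alpha_t\to0$, so unrolling against (ii) controls $\sum_t\eta_t\sqrt{\tfrac1n\E\|\Z^t-\bar\Z^t\|^2}$ and gives the third (consensus) bound.

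With the consensus errors in hand I would turn to the descent estimate. By $L$‑smoothness of $f$ and $\|d^t\|\le1$, $f(\bar x^{t+1})\le f(\bar x^t)-\eta_t\langle\nabla f(\bar x^t),d^t\rangle+\tfrac{L}{2}\eta_t^2$, and the elementary bound $\langle g,z/\|z\|\rangle\ge\|g\|-2\|z-g\|$ applied at each node and averaged yields $\langle\nabla f(\bar x^t),d^t\rangle\ge\|\nabla f(\bar x^t)\|-2\sqrt{\tfrac1n\|\Z^t-\bar\Z^t\|^2}-2\|\bar z^t-\nabla f(\bar x^t)\|$. Summing over $t$, taking expectations, using $f(\bar x^T)\ge f^*$ and the fact that $\eta_t$ is non‑increasing (so $\eta_{T-1}\sum_t\E\|\nabla f(\bar x^t)\|\le\sum_t\eta_t\E\|\nabla f(\bar x^t)\|$) gives $\frac1T\sum_t\E\|\nabla f(\bar x^t)\|\le\frac{1}{T\eta_{T-1}}\big[\Delta_0+\tfrac{L}{2}\sum_t\eta_t^2+2\sum_t\eta_t\sqrt{\tfrac1n\E\|\Z^t-\bar\Z^t\|^2}+2\sum_t\eta_t\E\|\bar z^t-\nabla f(\bar x^t)\|\big]$ with $T\eta_{T-1}=\Theta(n^{1/4}T^{1/4})$. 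The remaining ingredient is the approximation‑error recursion: writing $\bar z^{t+1}-\nabla f(\bar x^{t+1})=(1-\alpha_t)(\bar z^t-\nabla f(\bar x^t))+(1-\alpha_t)(\nabla f(\bar x^t)-\nabla f(\bar x^{t+1}))+\alpha_t(\bar v^{t+1}-\nabla f(\bar x^{t+1}))$, bounding the middle term by $L\eta_t$, and splitting the last into the zero‑mean noise $\alpha_t(\bar v^{t+1}-\E[\bar v^{t+1}\mid\mathcal F_{t-1}])$ (conditional second moment $\le\alpha_t^2\sigma^2/n$, the source of linear speedup) and the bias $\alpha_t(\E[\bar v^{t+1}\mid\mathcal F_{t-1}]-\nabla f(\bar x^{t+1}))$ bounded by $\alpha_t L\big(\sqrt{\tfrac1n\|\X^t-\bar\X^t\|^2}+\eta_t\big)$; tracking $\E\|\bar z^t-\nabla f(\bar x^t)\|^2$ through this recursion (cross terms vanish by conditioning) and Jensen then gives the stated bound on $\tfrac1T\sum_t\E\|\bar z^t-\nabla f(\bar x^t)\|$. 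Substituting the three pieces into the descent bound above produces the first inequality.

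I expect the main obstacle to be the bookkeeping that ties these four recursions together under time‑varying stepsizes: bounding the cumulative weighted error terms $\sum_t\eta_t\sqrt{\tfrac1n\E\|\Z^t-\bar\Z^t\|^2}$ and $\sum_t\eta_t\E\|\bar z^t-\nabla f(\bar x^t)\|$ requires repeatedly convolving the geometric contraction factors ($\rho^{t-s}$ and $\prod_l(1-\alpha_l)$) against the polynomially decaying $\eta_s,\alpha_s$ and then checking that the resulting powers of $n$, $T$ and $\tilde\rho$ collapse to exactly the advertised rates — this is also where the logarithmic factors absorbed in $\tilde{\mathcal O}$ arise. A secondary subtlety specific to the normalized scheme is that, unlike in an un‑normalized gradient‑tracking analysis, it does not suffice for the averaged tracker $\bar z^t$ to be accurate: the direction $d^t$ is an average of locally normalized vectors, so \emph{each} $z_i^t$ must be close to $\nabla f(\bar x^t)$, which is precisely why the $\Z$‑consensus error and the approximation error have to be driven down simultaneously.
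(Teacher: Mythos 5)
Your plan follows essentially the same architecture as the paper's proof: the descent step built on the normalized-direction inequality $\langle g, z/\|z\|\rangle\ge\|g\|-2\|z-g\|$ (Lemma~\ref{lemma_descent}), the key observation that $\|\hat{\Z}^t-\Bar{\hat{\Z}}^t\|^2\le n$ so the $\X$-consensus recursion is driven purely by $\eta_t^2$ (Lemma~\ref{lemma_one_step_consensus}), the momentum recursion for $\Bar z^t-\nabla f(\Bar x^t)$ with the same three error sources (Lemma~\ref{lemma_consensus}), the chain $\X\to\V\to\U\to\Z$ for the remaining consensus errors (Lemmas~\ref{lemma_one_step_consensus} and~\ref{lemma_concensus_summation}), and the same final series estimates for $t^{-3/4}$ and $t^{-1/2}$ stepsizes (Lemmas~\ref{lemma_technical_series} and~\ref{lemma_technical_summation2}). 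The first two displayed bounds come out of your plan at the advertised rates.

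There is, however, one concrete step that falls short of the third bound. In your recursion (iii) you invoke convexity of $\|\cdot\|^2$ to get $\|\Z^{t+1}-\Bar\Z^{t+1}\|^2\le(1-\alpha_t)\rho^2\|\Z^t-\Bar\Z^t\|^2+\alpha_t\rho^2\|\U^{t+1}-\Bar\U^{t+1}\|^2$, so the forcing term carries weight $\alpha_t$. Summing this against the (roughly constant in $t$) bound $\tfrac1n\E\|\U^t-\Bar\U^t\|^2=\mathcal O\big(\Tilde\rho(\sigma^2+L^2)\big)$ yields $\sum_{t<T}\tfrac1n\E\|\Z^t-\Bar\Z^t\|^2=\mathcal O\big(\Tilde\rho^2(\sigma^2+L^2)\sum_t\alpha_t\big)=\mathcal O\big(\Tilde\rho^2(\sigma^2+L^2)\sqrt{nT}\big)$, i.e.\ only $\mathcal O(\sqrt{n/T})$ after dividing by $T$ --- a factor of $\sqrt T$ worse than the claimed $\mathcal O(1/T)$-type consensus bound. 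The paper instead uses Young's inequality $\|a+b\|^2\le(1+1/c)\|a\|^2+(1+c)\|b\|^2$ with $c=2\rho^2/(1-\rho^2)$, which weakens the contraction factor to $(1+\rho)/2$ but places weight $\alpha_t^2$ on the $\U$-term (Lemma~\ref{lemma_one_step_consensus}); since $\sum_t\alpha_t^2=n(1+\log T)$ while $\sum_t\alpha_t=\Theta(\sqrt{nT})$, this is precisely what makes the third display summable at rate $\Tilde{\mathcal O}(1/T)$. Your weaker recursion happens to still suffice for the first display, because there the $\Z$-consensus enters only through $\sum_t\eta_t\sqrt{\tfrac1n\E\|\Z^t-\Bar\Z^t\|^2}$ and $\sum_t\eta_t\sqrt{\alpha_t}=\mathcal O(n^{1/2}\log T)$ is acceptable after dividing by $T\eta_{T-1}=\Theta(n^{1/4}T^{1/4})$. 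So the fix is purely local: replace the convexity step by the Young-inequality step and the rest of your argument goes through. (A minor side remark: the update is $\Z^{t+1}=\big((1-\alpha_t)\Z^t+\alpha_t\tilde\U^{t+1}\big)W$, so $W$ also multiplies the $\U$-term; this only helps and does not affect the point above.)
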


\begin{remark}\label{rmk_dimishing_step}
  To ensure  $1/T\sum_{t=0}^{T-1}\E\|\nabla f(\Bar{x}^t)\|\leq\epsilon$, we need $T=\tilde{\mathcal{O}}(1/\epsilon^4)$, which again matches the lower bound as in~\cite{lu2021optimal, arjevani2023lower} up to logarithmic factors. Yet we are not able to achieve a concrete linear speedup effect with this choice of algorithm parameters. This might root back to our estimation of certain error terms in the proof (see Lemma \ref{lemma_consensus}). Nevertheless, we show in the numerical experiments (see Figure \ref{fig:linear_speedup_plots}) that the stepsize choices $\alpha=\sqrt{n}$ and $\eta=n^{1/4}$ can still achieve linear speedup empirically, and we thus stick to this choice of parameters in experiments.
\end{remark}

\section{Numerical experiments}\label{sec_experiments}
In this section, we test D-NASA (Algorithm \ref{algo_decen_normalized_averaged_grad_tracking}) numerically and compare it with existing algorithms such as D-SGD~\cite{lian2017can}, D-SGT (Algorithm \ref{algo_decen_grad_tracking}) and D-ASAGT~\cite{pmlr_v216_xiao23a}\footnote{\citet{pmlr_v216_xiao23a} considers nonsmooth proximal version of the algorithm. In our numerical experiments we simply regard the nonsmooth proximal term as zero.}. We follow the experimental setup in the code framework of \citet{mancino2023proximal} to test the algorithms on real datasets using mpi4py~\citep{dalcin2021mpi4py} and PyTorch~\citep{paszke2019pytorch}.

\subsection{Synthetic data experiments}
We first use synthetic data to verify the linear speedup effect of D-NASA (Algorithm \ref{algo_decen_normalized_averaged_grad_tracking}). We consider a simple linear regression model where the data sample at each node $\xi=(X, Y)$ is generated by $Y=X^\top\theta_\star +\epsilon$ where $X, \theta_\star\in\RR^d$ and $\epsilon\sim\mathcal{N}(0, \sigma^2)$ are Gaussian noise. We solve the following least-square problem:
\begin{align}\label{eq_synthetic_ls}
\min_{\theta \in \RR^d} \frac{1}{n} \sum_{i=1}^n \mathbb{E}_{(X, Y) \sim \mathcal{D}_i}[(Y-X^{\top} \theta)^2].
\end{align}
In our experiment, we set $d=100$, data $X\sim\mathcal{N}(0, I_d)$ and $\sigma=0.1$. We simulate streaming data samples with batch
size = $1$ for training and $10000$ data samples per node for evaluations. We employ a ring topology for the network where
self-weighting and neighbor weights are set to be $1/3$. For D-NASA, we try both fixed stepsizes ($\alpha_t=\sqrt{n/T}$, $\eta_t=n^{1/4}/T^{3/4}$) and diminishing stepsizes ($\alpha_t=\sqrt{n/t}$, $\eta_t=n^{1/4}/t^{3/4}$) where the total number of iteration $T=15000$ and $n\in\{5, 10, 20\}$. Figure \ref{fig:linear_speedup_plots} shows results of our experiment. It could be seen that with more number of nodes D-NASA is more efficient in terms of both test loss and the norm of the gradient (at the global point $\bar{x}^t$). 

\begin{figure*}[!h]
    \begin{center}
    \subfigure[]{\includegraphics[width=0.44\textwidth]{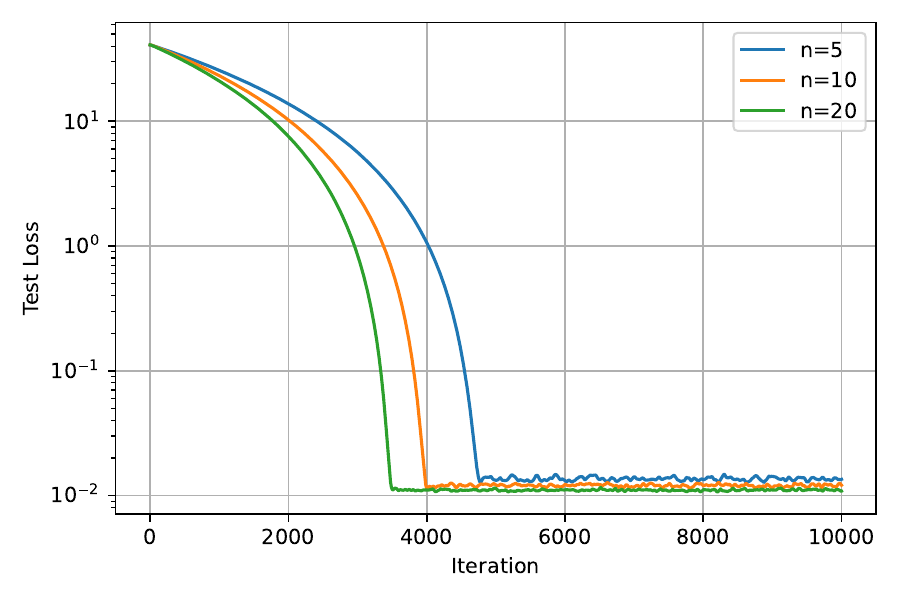}}
    \subfigure[]{\includegraphics[width=0.44\textwidth]{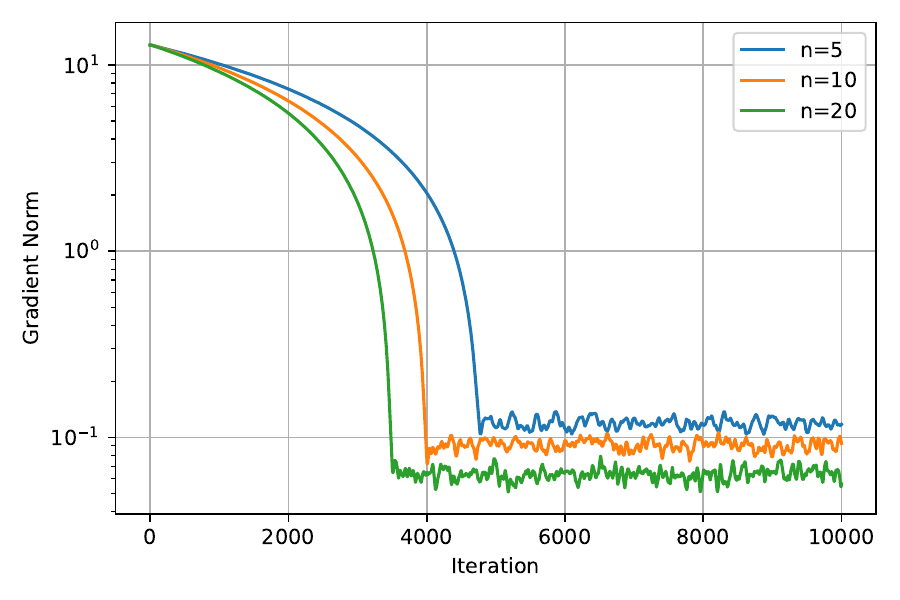}}
    \subfigure[]{\includegraphics[width=0.44\textwidth]{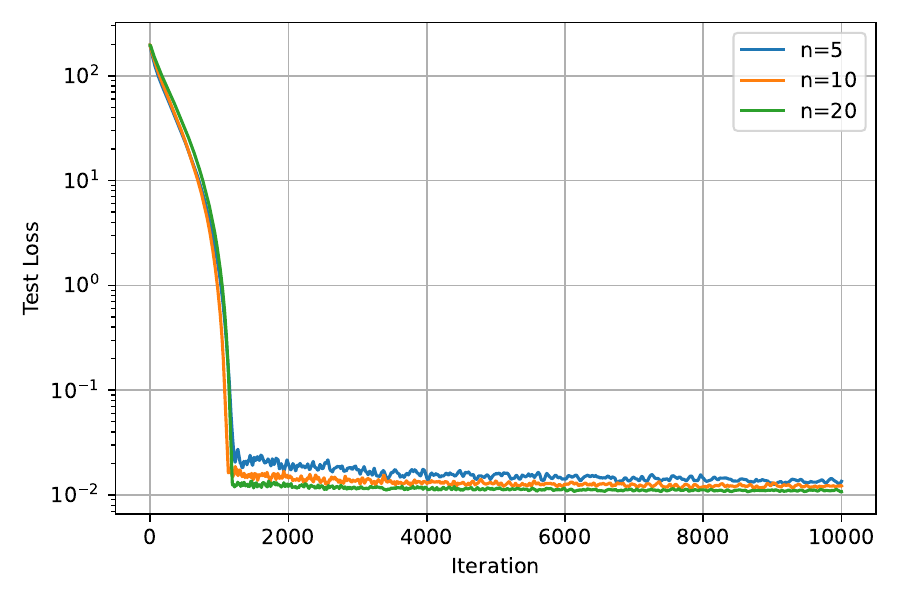}}
    \subfigure[]{\includegraphics[width=0.44\textwidth]{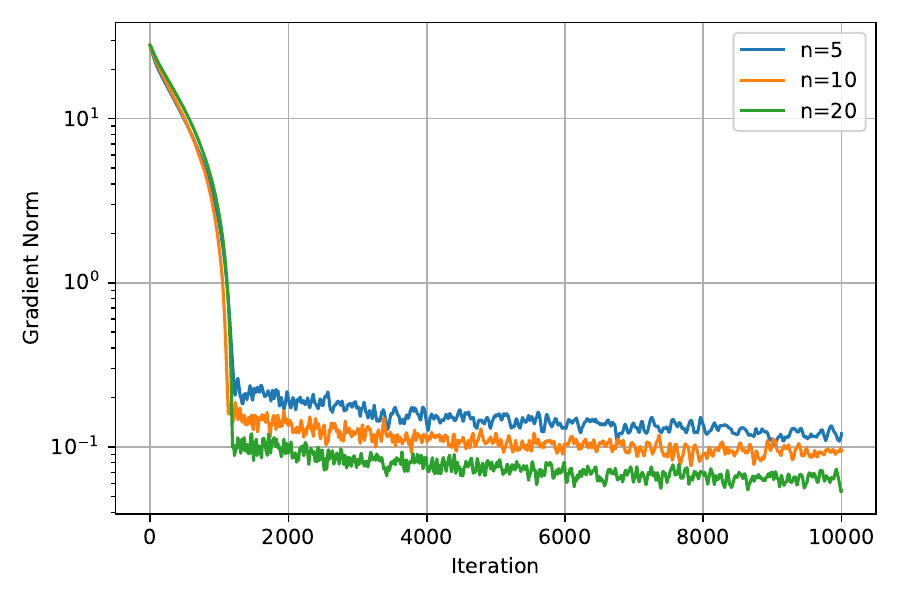}}
    
    \caption{The convergence curve of Algorithm \ref{algo_decen_normalized_averaged_grad_tracking} to solve \eqref{eq_synthetic_ls} with different choice of number of nodes/devices $n\in\{5, 10, 20\}$. The above two figures correspond to fixed stepsizes ($\alpha_t=\sqrt{n/T}$, $\eta_t=n^{1/4}/T^{3/4}$) and below two corresponds to diminishing stepsizes ($\alpha_t=\sqrt{n/t}$, $\eta_t=n^{1/4}/t^{3/4}$), respectively.}
    \label{fig:linear_speedup_plots}
    \end{center}
\end{figure*}

Next, we compare D-NASA on \eqref{eq_synthetic_ls} with the other three algorithms. We still set $d=100$, yet with a spike model with $X\sim\mathcal{N}(0, \diag(100, 1, \cdots, 1))$ where only the first entry has a large variance, in order to make the Lipschitz smooth constant of \eqref{eq_synthetic_ls} large. It is worth noting that despite the fact that conservative constant stepsize choices (usually $\mathcal{O}(\sqrt{n/T})$) can lead to linear speedup effect in decentralized training theoretically~\citep{lian2017can, tang2018d}, this choice is usually for the sake of proof simplicity (see footnote on Page 6 of \citet{lian2017can}). In practice it is tempting to choose diminishing stepsize in the learning rate scheduler, since the model training often benefits from large stepsizes, a phenomenon that has attracted a lot of attention recently in deep learning community~\citep{lewkowycz2020large, cohen2021gradient}. We thus compare D-SGD, D-SGT, D-ASAGT with D-NASA using diminishing stepsizes with a tunable hyperparameter.

For D-SGD and D-SGT, we test the algorithm with diminishing stepsizes $\eta_t=\eta\sqrt{n/t}$ as suggested by \citet{lian2017can,koloskova2021improved}; For D-ASAGT, we test the algorithm with stepsizes $\eta_t=\eta\sqrt{n/t}$ and $\alpha_t=\min\{\sqrt{n/t}, 0.3\}$ as suggested in their experiments~\cite{pmlr_v216_xiao23a}; For D-NASA we take $\eta_t=n^{1/4}/t^{3/4}$ and $\alpha_t=\sqrt{n/t}$ based on our theoretical analysis. We conduct a simple grid search for D-SGD, D-SGD and D-ASAGT to determine and use the best choices of $\eta$ for each algorithms. The convergence result is shown in Figure \ref{fig:synthetic_spike_model}. Among all algorithms, the test loss of D-NASA decreases with oscillations, presenting the catapults~\citep{lewkowycz2020large} and Edge of Stability (EOS)~\citep{cohen2021gradient} phenomena, two closely related large-stepsize regimes in which the training converges non-monotonically with oscillations and usually generalize better than small-stepsize settings~\citep{lewkowycz2020large, cohen2021gradient, arora2022understanding, ahn2022learning}. Furthermore, we observe that the test loss of our algorithm is much lower than other baselines, indicating superior generalization performance. We emphasize that different from the large-stepsize training setup in the literature, our Algorithm presents the catapults and EOS without any hyperparameter tuning.

\begin{figure*}[!h]
    \begin{center}
    \subfigure[]{\includegraphics[width=0.44\textwidth]{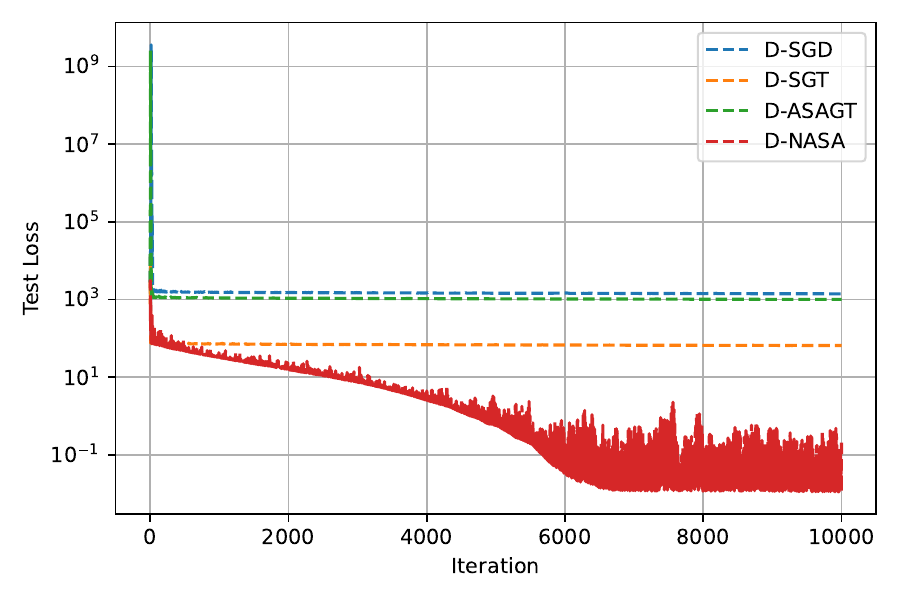}\label{fig:synthetic_spike_testloss}}
    \subfigure[]{\includegraphics[width=0.44\textwidth]{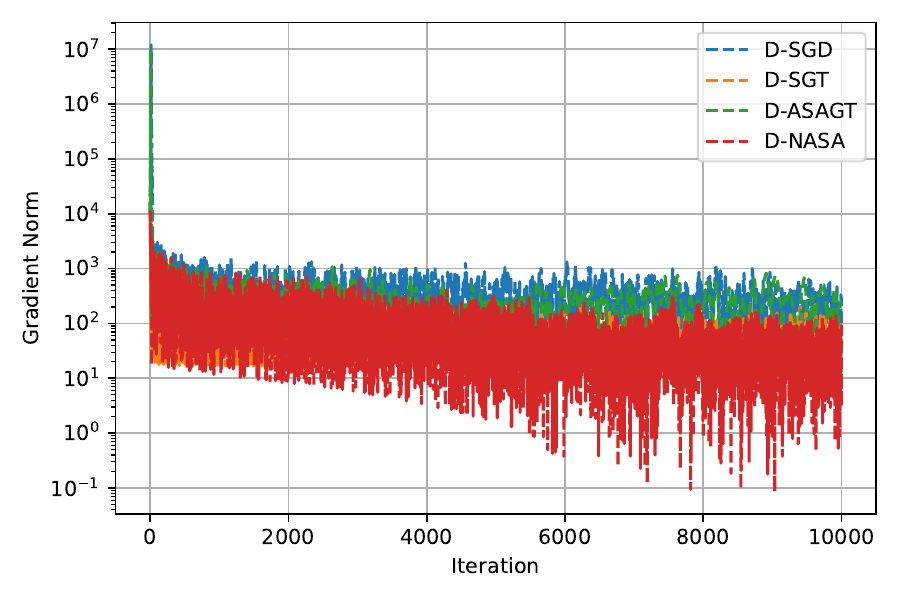}\label{fig:synthetic_spike_gradnorm}}
    
    \caption{Convergence curve for D-SGD, D-SGD, D-ASAGT and D-NASA for solving \eqref{eq_synthetic_ls} under the spike model.}
    \label{fig:synthetic_spike_model}
    \end{center}
\end{figure*}

\subsection{Real-world data experiments}\label{sec_real_data_experiments}
We utilize the code framework in \citet{mancino2023proximal} where we compare D-NASA with D-SGD, D-SGT, D-ASAGT for solving the classification problem:
\begin{equation}\label{eq_real_data_classification}
\min _{\theta \in \mathbb{R}^d} \frac{1}{n} \sum_{i=1}^n \frac{1}{\left|\mathcal{D}_i\right|} \sum_{(x, y) \in \mathcal{D}_i} \ell(f(x ; \theta), y)
\end{equation}
on MNIST, a9a and miniboone datasets\footnote{Available at \url{https://www.openml.org}}. Here $\ell$ denotes the cross-entropy loss, and $f$ represents a neural network parameterized by $\theta$ with $x$ being its input data. $\mathcal{D}_i$ is the training set only available to agent $i$. We use a $2$-layer perception model on a9a and miniboone, and the LeNet architecture~\cite{lecun2015lenet} for the MNIST dataset. We take $n=8$ which connect in the form of a random graph ($\rho=0.375$) for all three datasets\footnote{To make sure that the graph is connected, we set the probability of each two node being connected as $0.8$. We refer to Appendix \ref{appendix_experiment} for more graph designs due to page limits.}. The data is divided evenly to $n=8$ devices (CPUs) and using mpi4py interface to communicate the computation results. The batch-sizes are fixed to be 32.

Similar to the synthetic data, for D-SGD and D-SGT, again we test the algorithm with diminishing stepsizes $\eta_t=\eta\sqrt{n/t}$; For D-ASAGT, we test the algorithm with $\eta_t=\eta\sqrt{n/t}$ and $\alpha_t=\min\{\sqrt{n/t}, 0.3\}$; For D-NASA we again take $\eta_t=n^{1/4}/t^{3/4}$ and $\alpha_t=\sqrt{n/t}$ based on our theory. Figure \ref{fig:acc_learning_rate} shows the test accuracy under different stepsizes $\eta\in \{0.005, 0.01, 0.5, 1, 5, 10, 50, 100\}$, where the dashed horizontal line is the result for D-NASA. We can see that D-NASA yields comparable numerical results without tuning any parameters, and other three algorithms can also work well under certain parameter choices\footnote{It can be seen that D-SGD, D-SGT and D-ASAGT all seem to work well when $\eta$ is around 10. We believe the main reason is that all the datasets we tested are normalized and the Lipschitz smooth constant are fairly similar for all three datasets.}.

\begin{figure*}[!h]
    \begin{center}
    \subfigure[MNIST]{\includegraphics[width=0.32\textwidth]{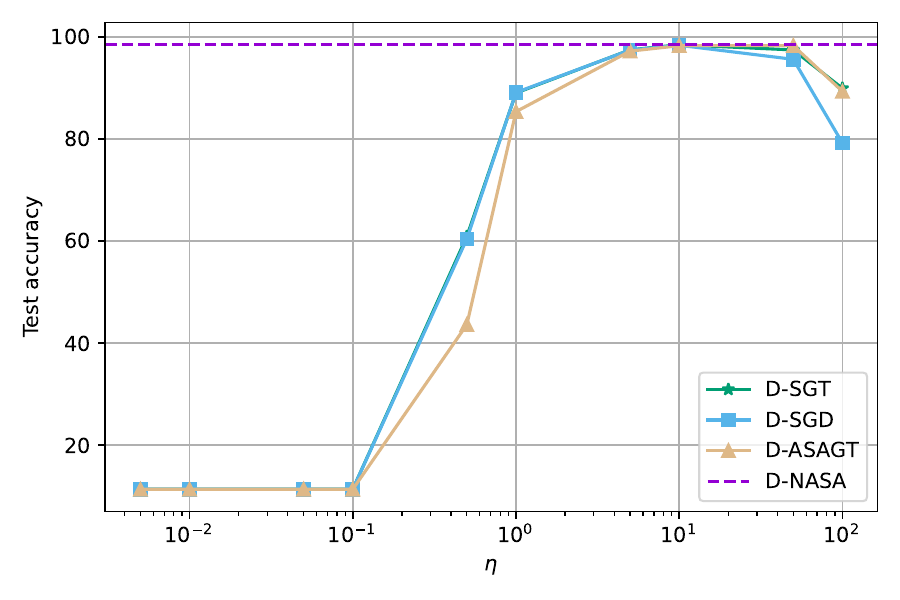}}
    \subfigure[a9a]{\includegraphics[width=0.32\textwidth]{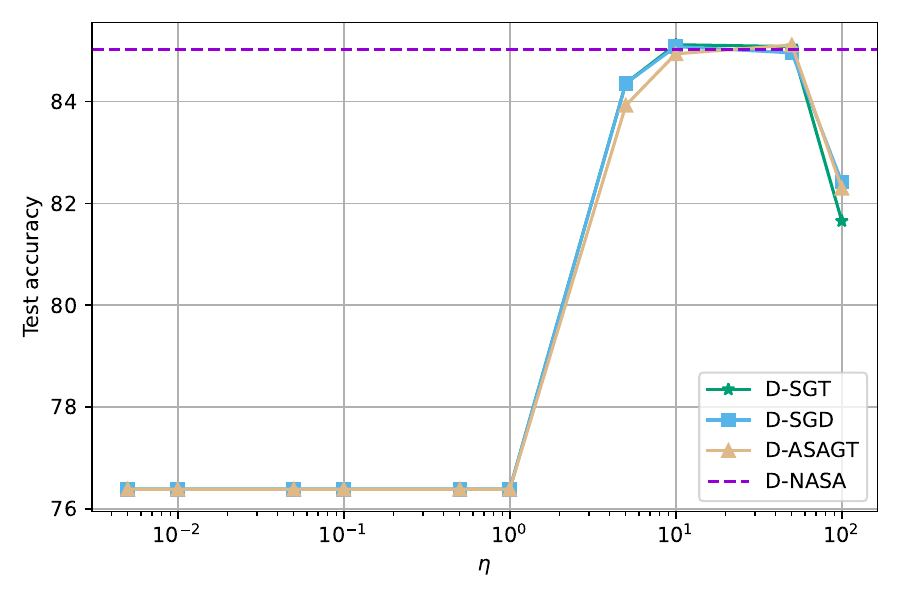}}
    \subfigure[miniboone]{\includegraphics[width=0.32\textwidth]{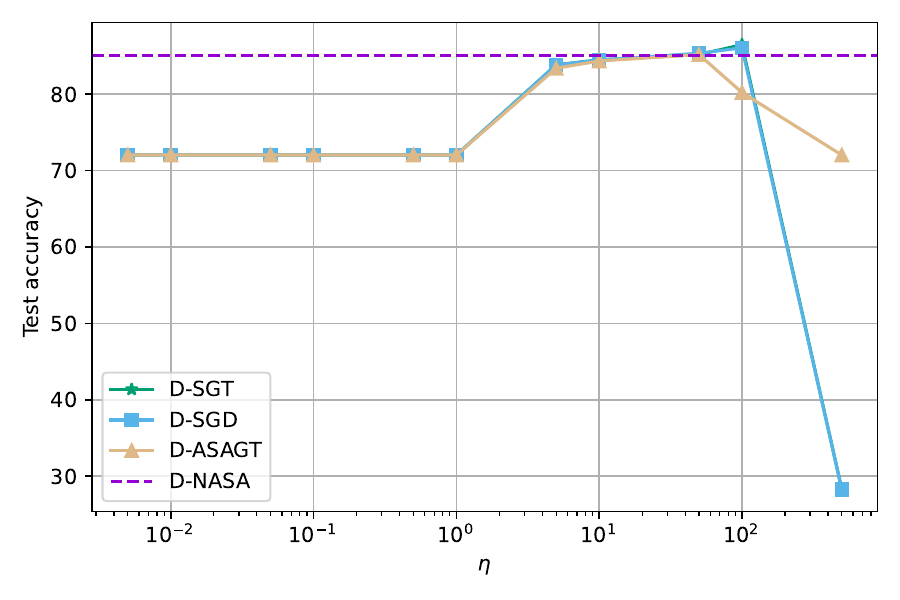}}
    
    \caption{The testing accuracy of the outputs from different algorithms with respect to different choices of learning rates.}
    \label{fig:acc_learning_rate}
    \end{center}
\end{figure*}

\section{Conclusion}
In this paper we propose D-NASA, a problem-parameter-free decentralized stochastic optimization algorithm and give its finite-time convergence analysis. Moreover, we showcase that in comparison to other baselines, our algorithm demonstrates superior generalization without tedious hyperparameter tuning process, thus having great potential for large scale machine learning problems. It would be interesting to explore parameter-free convergence in convex, also the nonsmooth regimes.

% \section*{Acknowledgements}

\clearpage

\bibliography{reference}
\bibliographystyle{icml2024}

%%%%%%%%%%%%%%%%%%%%%%%%%%%%%%%%%%%%%%%%%%%%%%%%%%%%%%%%%%%%%%%%%%%%%%%%%%%%%%%
%%%%%%%%%%%%%%%%%%%%%%%%%%%%%%%%%%%%%%%%%%%%%%%%%%%%%%%%%%%%%%%%%%%%%%%%%%%%%%%
% APPENDIX
%%%%%%%%%%%%%%%%%%%%%%%%%%%%%%%%%%%%%%%%%%%%%%%%%%%%%%%%%%%%%%%%%%%%%%%%%%%%%%%
%%%%%%%%%%%%%%%%%%%%%%%%%%%%%%%%%%%%%%%%%%%%%%%%%%%%%%%%%%%%%%%%%%%%%%%%%%%%%%%
\newpage
\appendix
\onecolumn

\begin{center}
    \noindent\rule{\textwidth}{4pt} \vspace{-0.2cm}
    
    \LARGE \textbf{Appendix}
    
    \noindent\rule{\textwidth}{1.2pt}
\end{center}

\section{Details of experiments}\label{appendix_experiment}
Our experiments are performed on Amazon AWS EC2 g5.4xlarge cluster which consists of 16 vCPUs with 64 GiB memory and NVIDIA A10G GPU with 24 GiB memory. All the experiments are conducted on CPU where 8 CPU are used to imitate 8 different nodes. The network topology is chosen as Figure \ref{fig:different_graphs}.

\begin{figure*}[h]
    \centering
    \begin{tikzpicture}[shorten >=1pt,-] % ring
      \tikzstyle{vertex}=[circle,fill=black!25,minimum size=6pt,inner sep=2pt]
      \node[vertex] (G_1) at (0, 1) {};
      \node[vertex] (G_2) at (0.707, 0.707)   {};
      \node[vertex] (G_3) at (1, 0)  {};
      \node[vertex] (G_4) at (0.707, -0.707)  {};
      \node[vertex] (G_5) at (0, -1)  {};
      \node[vertex] (G_6) at (-0.707, -0.707)  {};
      \node[vertex] (G_7) at (-1, 0)  {};
      \node[vertex] (G_8) at (-0.707, 0.707)  {};
      \draw (G_1) -- (G_2) -- (G_3) -- (G_4) -- (G_5) -- (G_6) -- (G_7) -- (G_8) -- (G_1) -- cycle;
      \node [below=0.3cm, align=flush center,text width=3cm] at (G_5)
        {
            $\text{Ring } \rho=0.805$
        };
    \end{tikzpicture}
    \begin{tikzpicture}[shorten >=1pt,-] % random
      \tikzstyle{vertex}=[circle,fill=black!25,minimum size=6pt,inner sep=2pt]
      \node[vertex] (v1) at (0, 1) {};
      \node[vertex] (v2) at (0.707, 0.707)   {};
      \node[vertex] (v3) at (1, 0)  {};
      \node[vertex] (v4) at (0.707, -0.707)  {};
      \node[vertex] (v5) at (0, -1)  {};
      \node[vertex] (v6) at (-0.707, -0.707)  {};
      \node[vertex] (v7) at (-1, 0)  {};
      \node[vertex] (v8) at (-0.707, 0.707)  {};
      \draw (v1) -- (v2) -- (v3) -- (v6) -- (v4) -- (v5) -- (v7) -- (v8) -- (v1) --cycle;
      \draw (v1) -- (v3) -- (v7) -- (v4) -- (v8) -- (v2) -- (v5) -- (v6) -- (v1) -- cycle;
      \draw (v1) -- (v4) -- (v2) -- (v6) -- (v7) -- (v1) --cycle;
      \draw (v1) -- (v5) -- (v8) -- cycle;
      \draw (v3) -- (v8) -- cycle;
      \node [below=0.3cm, align=flush center,text width=3cm] at (v5)
        {
            $\text{Random } \rho=0.375$
        };
    \end{tikzpicture}
    \begin{tikzpicture}[shorten >=1pt,-] % ladder
      \tikzstyle{vertex}=[circle,fill=black!25,minimum size=6pt,inner sep=2pt]
      \def\mypoints{%
          (-0.5, 1), (-0.5, 0.333), (-0.5, -0.333), (-0.5, -1),
          (0.5, 1), (0.5, 0.333), (0.5, -0.333), (0.5, -1)
      };
      \foreach \x [count=\xi] in \mypoints {
            \node[vertex] (\xi) at \x {};
      };
      \draw (1) -- (2) -- (3) -- (4);
      \draw (5) -- (6) -- (7) -- (8);
      \draw (1) -- (5);
      \draw (2) -- (6);
      \draw (3) -- (7);
      \draw (4) -- (8);
      \node [below=0.3cm, align=flush center,text width=3cm] at (v5)
        {
            $\text{Ladder } \rho=0.892$
        };
    \end{tikzpicture}
    \begin{tikzpicture}[shorten >=1pt,-] % complete
      \tikzstyle{vertex}=[circle,fill=black!25,minimum size=6pt,inner sep=2pt]
      \def\mypoints{%
          (0, 1), (0.707, 0.707), (1, 0), (0.707, -0.707),
          (0, -1), (-0.707, -0.707), (-1, 0), (-0.707, 0.707)
      };
      \foreach \x [count=\xi] in \mypoints {
            \foreach \y [count=\zeta] in \mypoints {
                \ifnum\zeta>\xi
                    \draw \x -- \y ;
                \fi
            };
      };
      \foreach \x [count=\xi] in \mypoints {
            \node[vertex] (\xi) at \x {};
      };
      \node [below=0.3cm, align=flush center,text width=3cm] at (v5)
        {
            $\text{Complete } \rho=0$
        };
    \end{tikzpicture}
    \caption{Network topology for $n=8$. The four graphs represent the ring, (an instance of) the random, the ladder and the complete graph.}
    \label{fig:different_graphs}
\end{figure*}
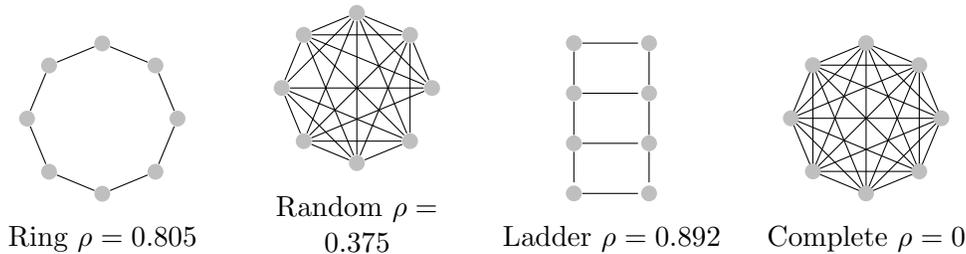

We now present the additional results for testing D-SGD, D-SGT, D-ASAGT, D-NASA on \eqref{eq_real_data_classification} with a9a data over different network topology as specified in Figure \ref{fig:different_graphs}. The hyperparameters follow exactly the same as in Section \ref{sec_real_data_experiments}. The results are presented in Figure \ref{fig:a9a_acc_learning_rate}. It can be seen that D-NASA achieves competitive testing accuracy under almost every network topology choice.

\begin{figure*}[ht!]
    \begin{center}
    \subfigure[Ring]{\includegraphics[width=0.44\textwidth]{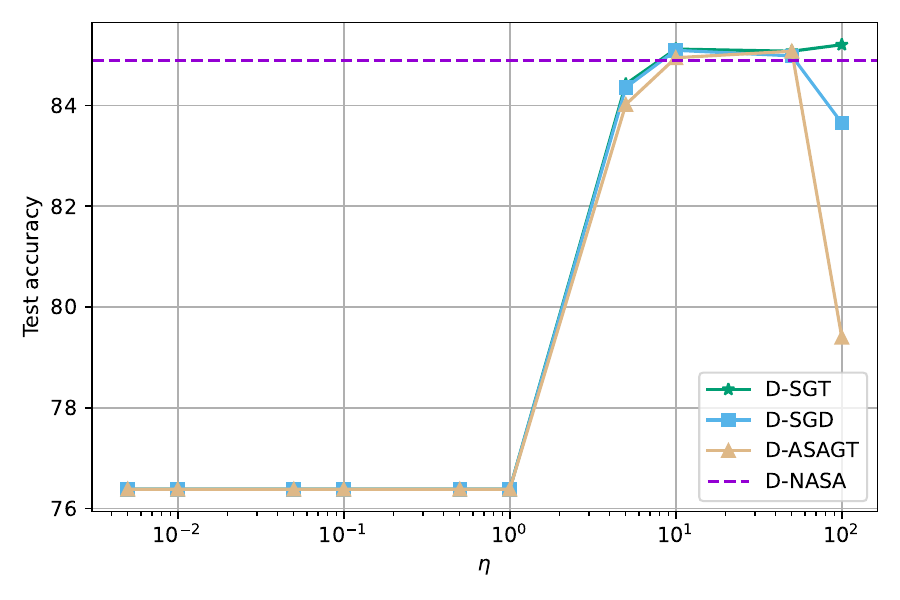}}
    \subfigure[Random]{\includegraphics[width=0.44\textwidth]{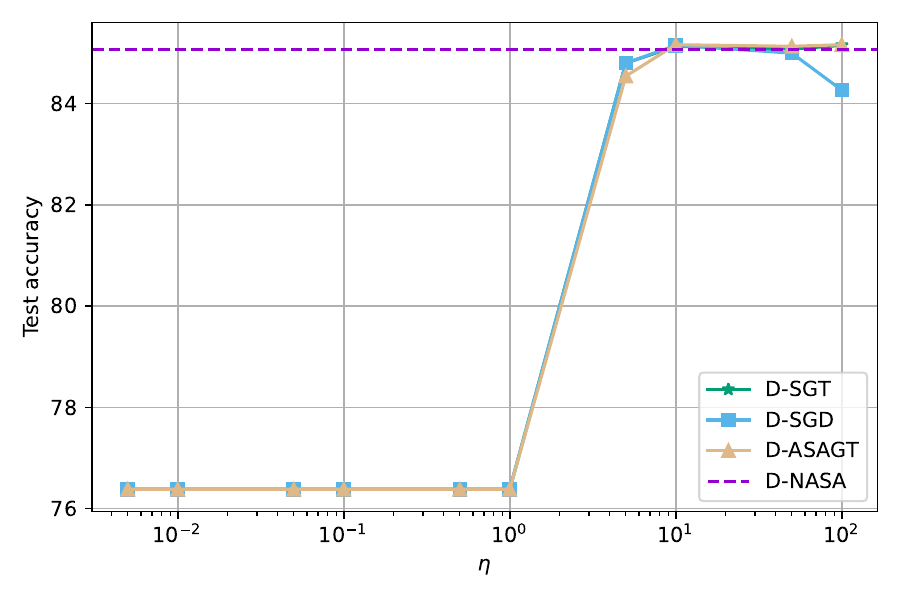}}
    \subfigure[Ladder]{\includegraphics[width=0.44\textwidth]{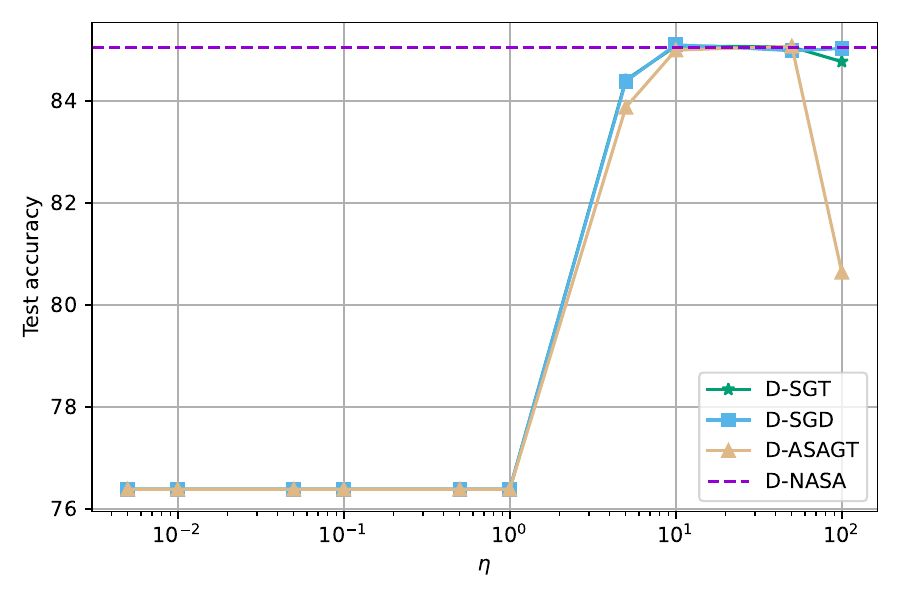}}
    \subfigure[Complete]{\includegraphics[width=0.44\textwidth]{a9a_acc_different_stepsize_complete.pdf}}
    
    \caption{The testing accuracy of the outputs from different algorithms with respect to different choices of learning rates for a9a dataset. The four figures corresponds to four different network graphs as in Figure \ref{fig:different_graphs}.}
    \label{fig:a9a_acc_learning_rate}
    \end{center}
\end{figure*}

We also include the figures of the loss, accuracy, and stationarity curves of all algorithms. Figure \ref{fig:mnist} and \ref{fig:a9a} shows the training/testing curve with respect to training epoch or CPU time when applying the four algorithms to \eqref{eq_real_data_classification} with MNIST and a9a dataset. We show each algorithm with the \textbf{best choice of stepsizes} in the light of Figure \ref{fig:acc_learning_rate}. One can see that D-NASA achieves competitive rate of convergence with exactly the same stepsize choice as our theory, without tuning any parameter.

\begin{figure*}[ht!]
    \begin{center}
    \subfigure{\includegraphics[width=0.243\textwidth]{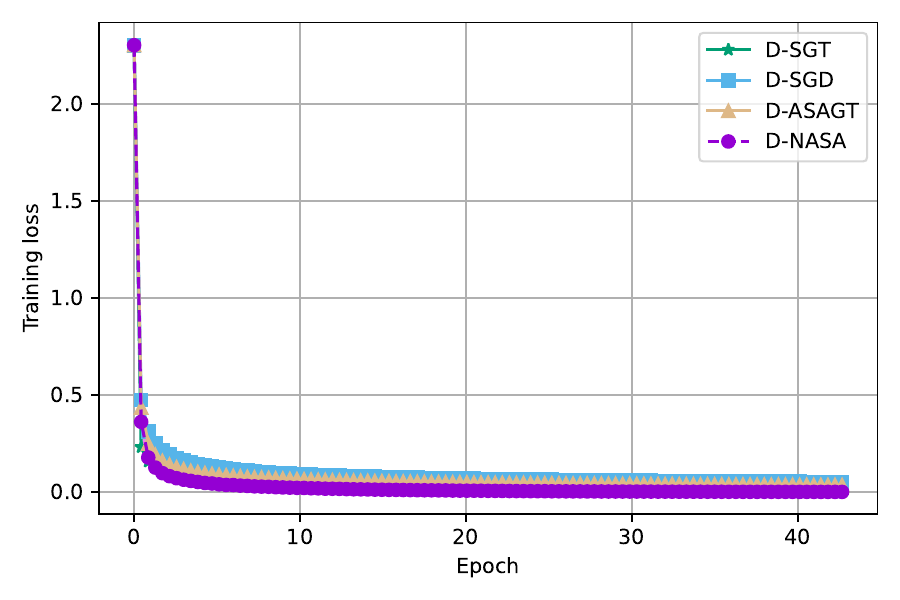}}
    \subfigure{\includegraphics[width=0.243\textwidth]{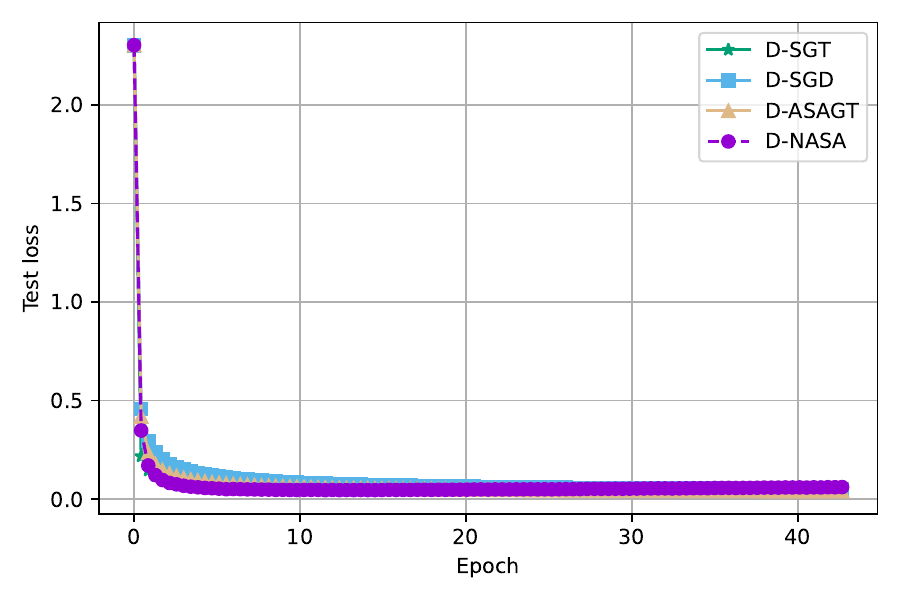}}
    \subfigure{\includegraphics[width=0.243\textwidth]{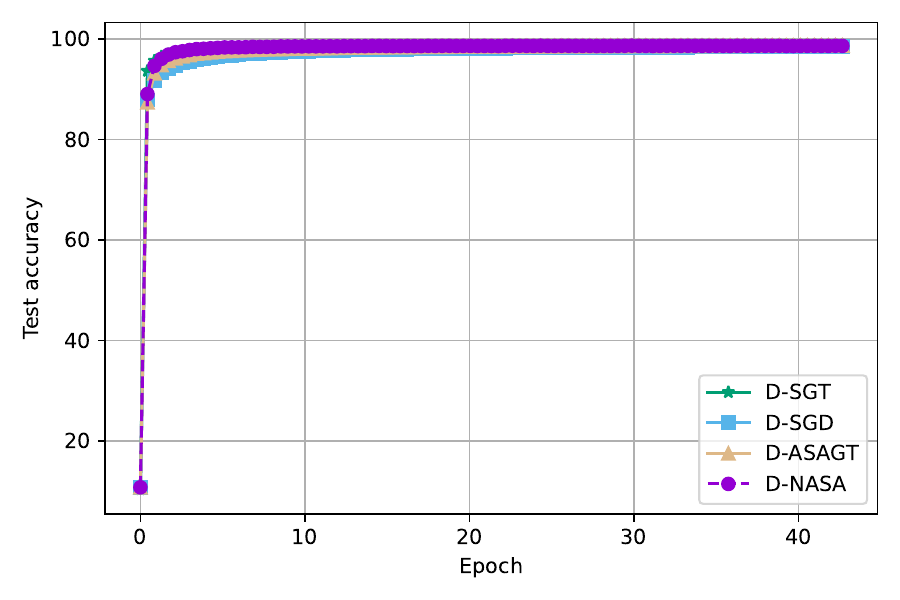}}
    \subfigure{\includegraphics[width=0.243\textwidth]{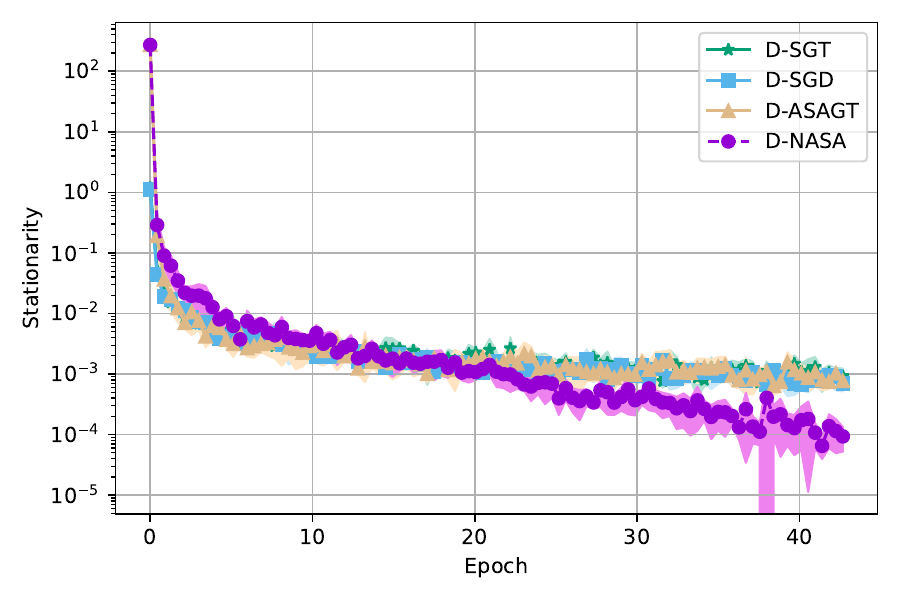}}

    % \setcounter{subfigure}{0}
    % \subfigure[]{\includegraphics[width=0.243\textwidth]{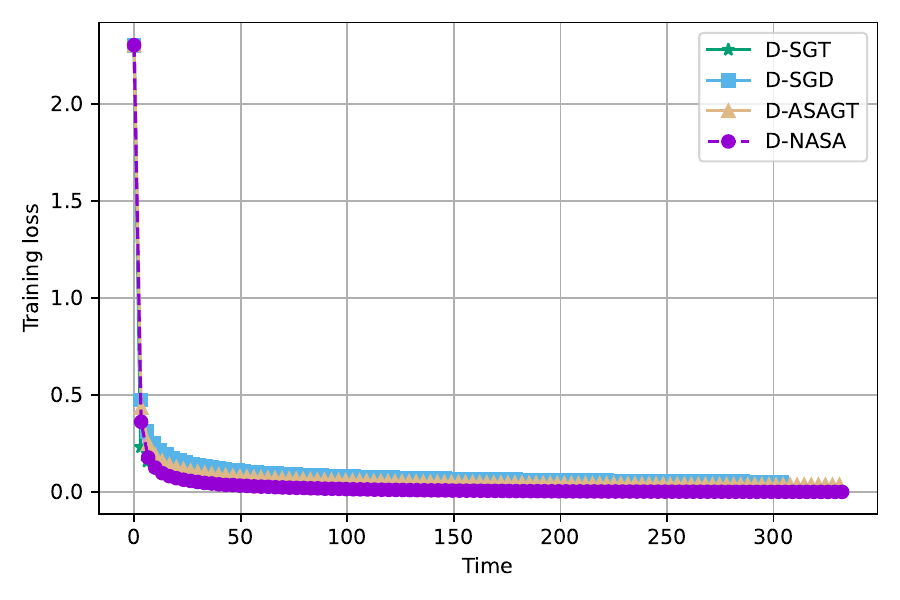}}
    % \subfigure[]{\includegraphics[width=0.243\textwidth]{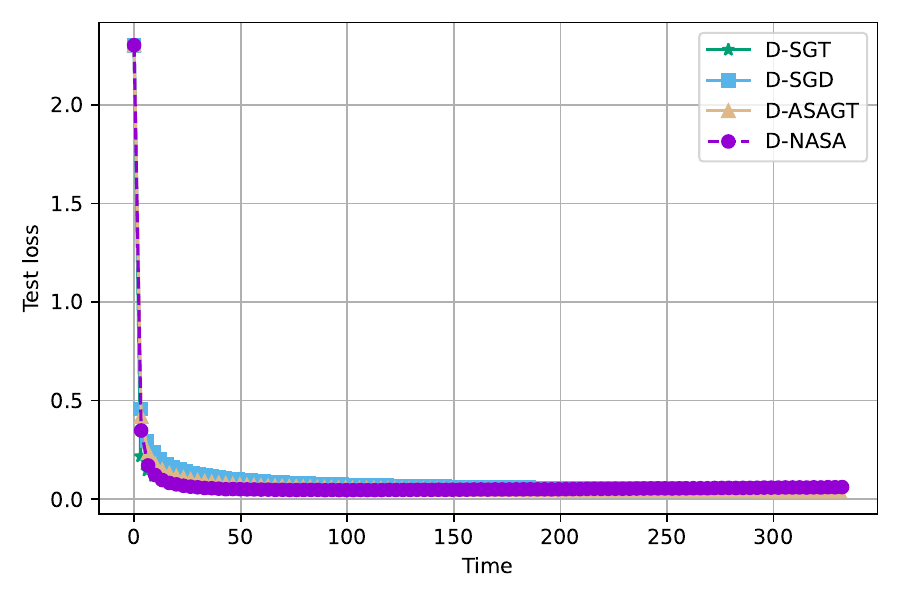}}
    % \subfigure[]{\includegraphics[width=0.243\textwidth]{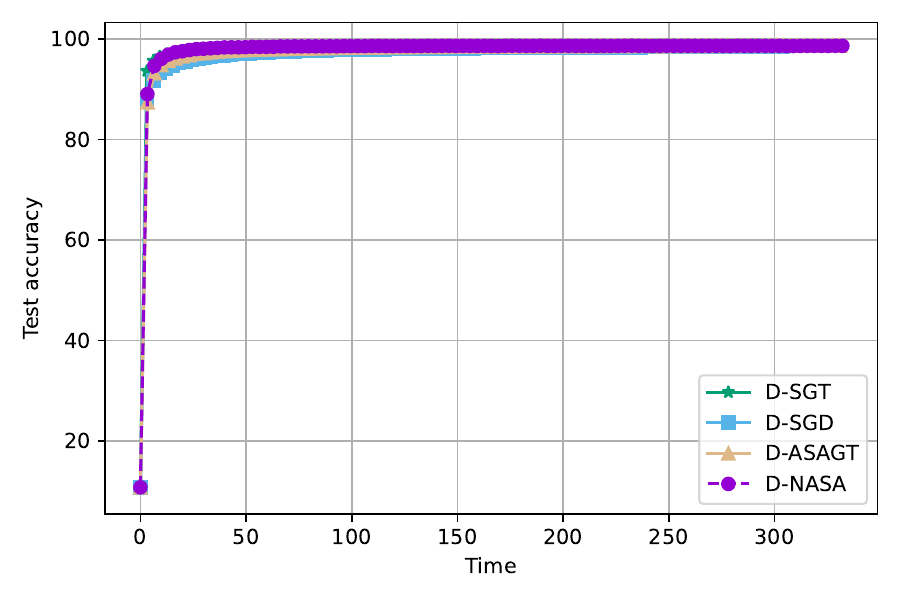}}
    % \subfigure[]{\includegraphics[width=0.243\textwidth]{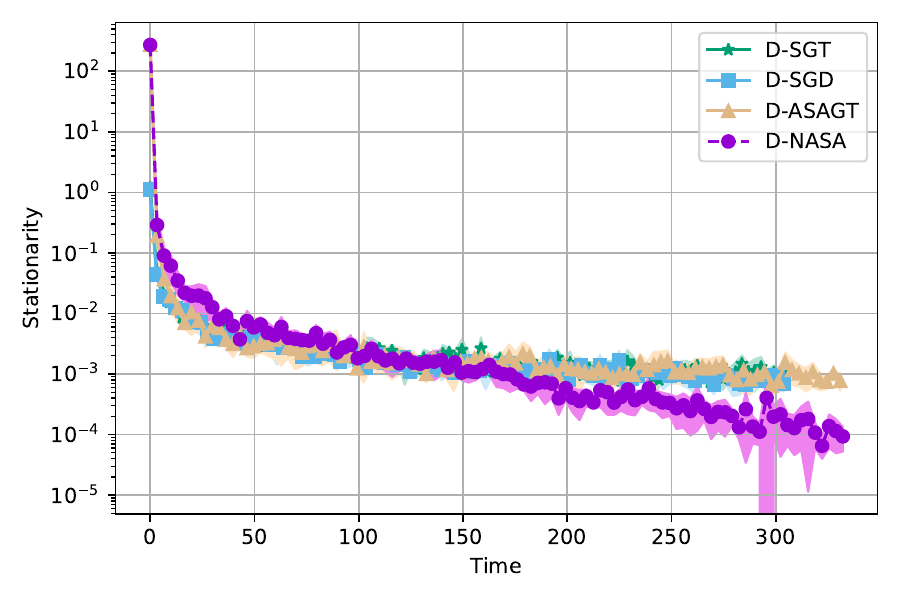}}
    
    \caption{The convergence curve of D-SGD, D-SGT, D-ASAGT, D-NASA for the MNIST dataset, while the first three are at their \textbf{best stepsizes} (after the grid search as in Figure \ref{fig:acc_learning_rate}), and D-NASA follows the stepsize choice as in Remark \ref{rmk_dimishing_step}, i.e. $\eta_t=n^{1/4}/t^{3/4}$ and $\alpha_t=n^{1/2}/t^{1/2}$. The four columns are the curves for training loss, testing loss, testing accuracy and stationarity, respectively. The experiments are repeated and averaged for 10 times.} % The two rows are the plots corresponding to the Epoch (communication rounds, above) and CPU time (below).
    \label{fig:mnist}
    \end{center}
\end{figure*}

\begin{figure*}[ht!]
    \begin{center}
    \subfigure{\includegraphics[width=0.243\textwidth]{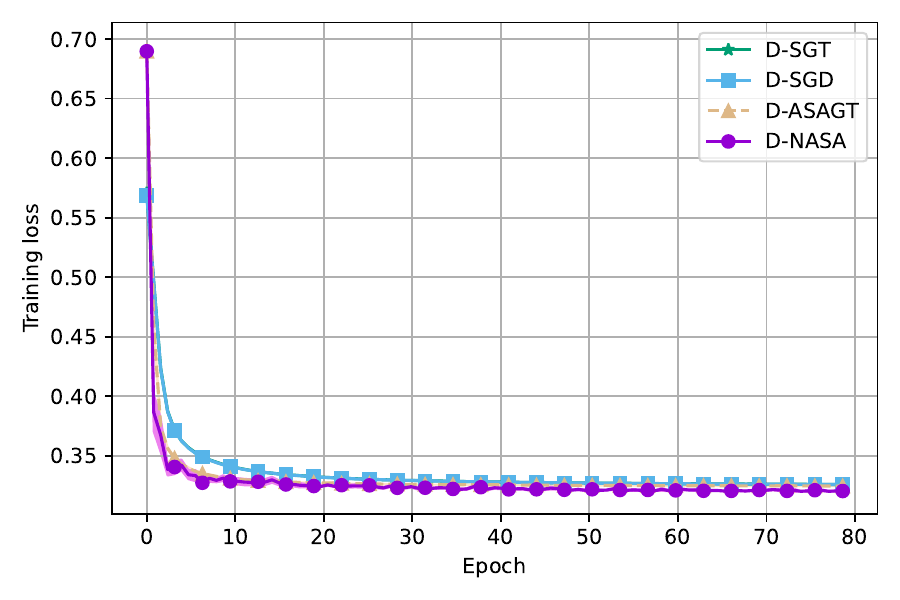}}
    \subfigure{\includegraphics[width=0.243\textwidth]{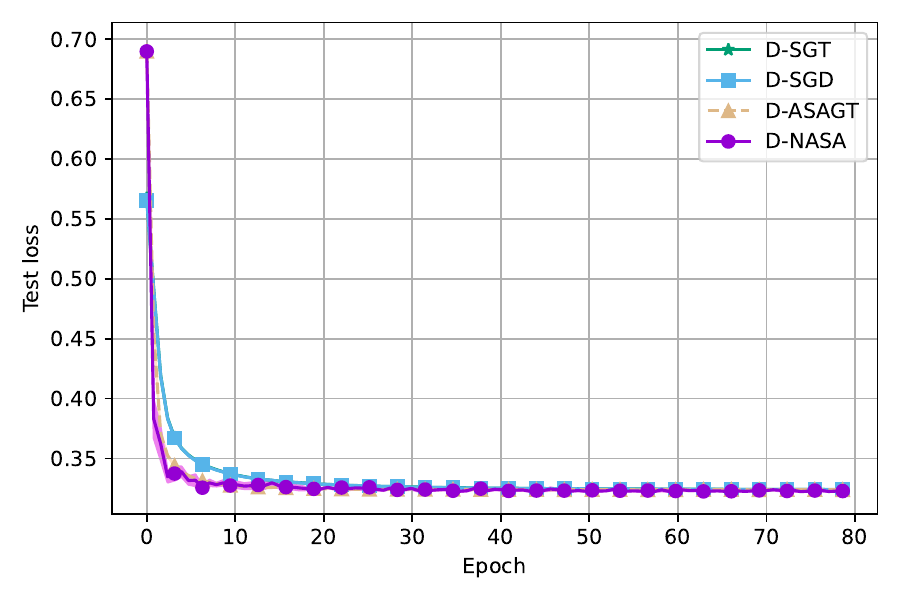}}
    \subfigure{\includegraphics[width=0.243\textwidth]{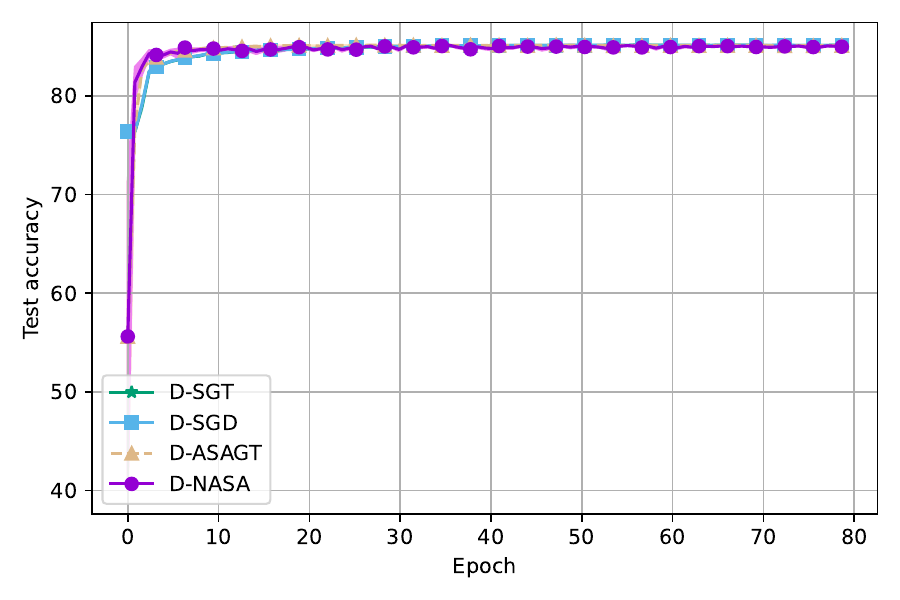}}
    \subfigure{\includegraphics[width=0.243\textwidth]{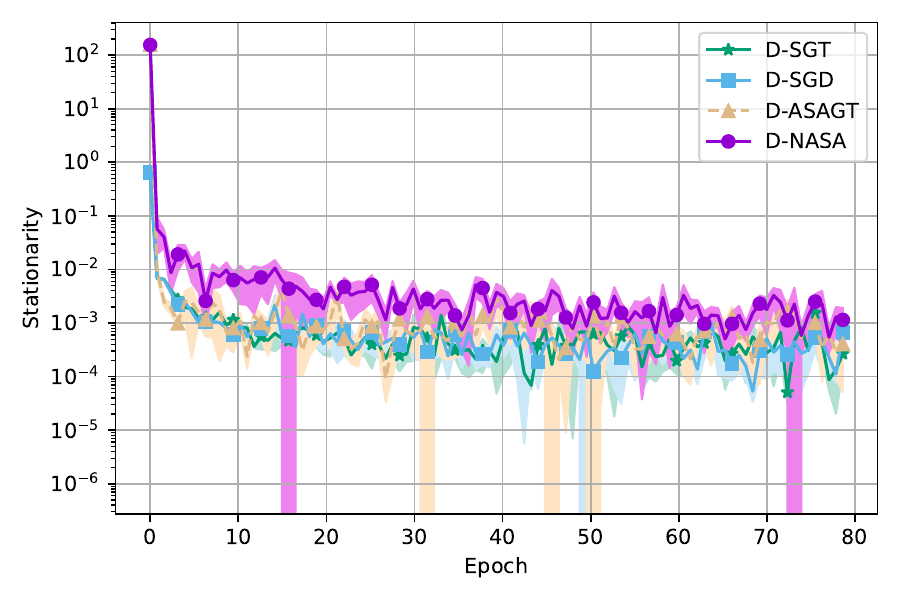}}

    % \setcounter{subfigure}{0}
    % \subfigure[]{\includegraphics[width=0.243\textwidth]{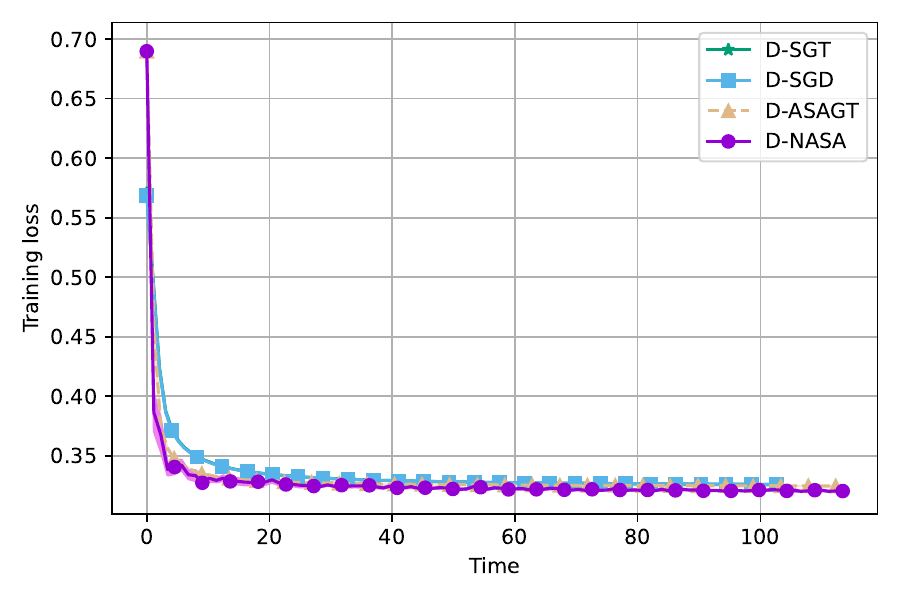}}
    % \subfigure[]{\includegraphics[width=0.243\textwidth]{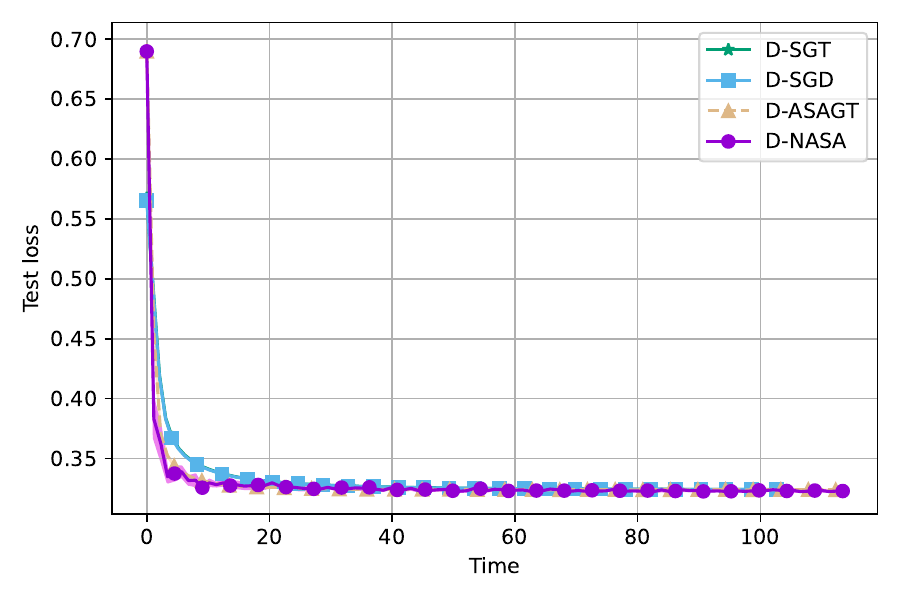}}
    % \subfigure[]{\includegraphics[width=0.243\textwidth]{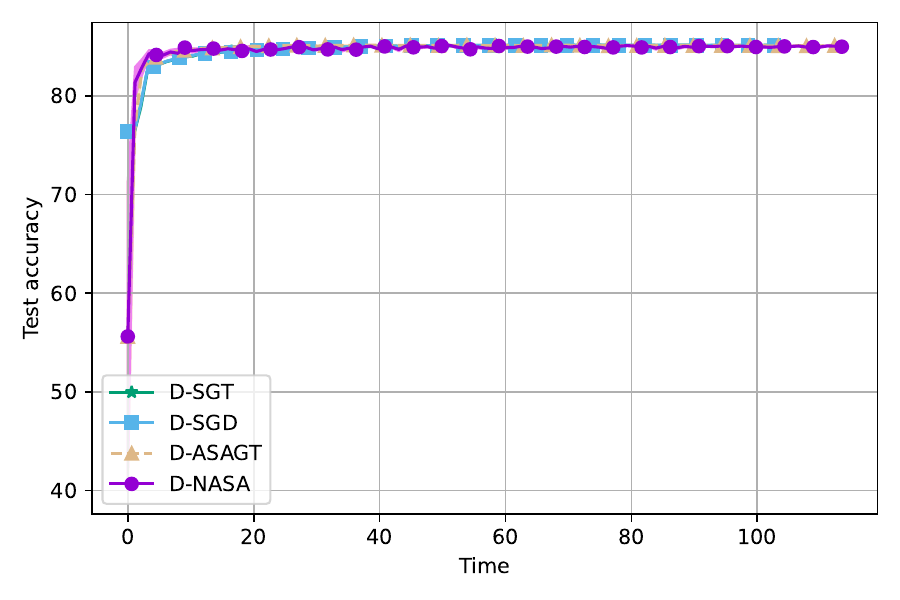}}
    % \subfigure[]{\includegraphics[width=0.243\textwidth]{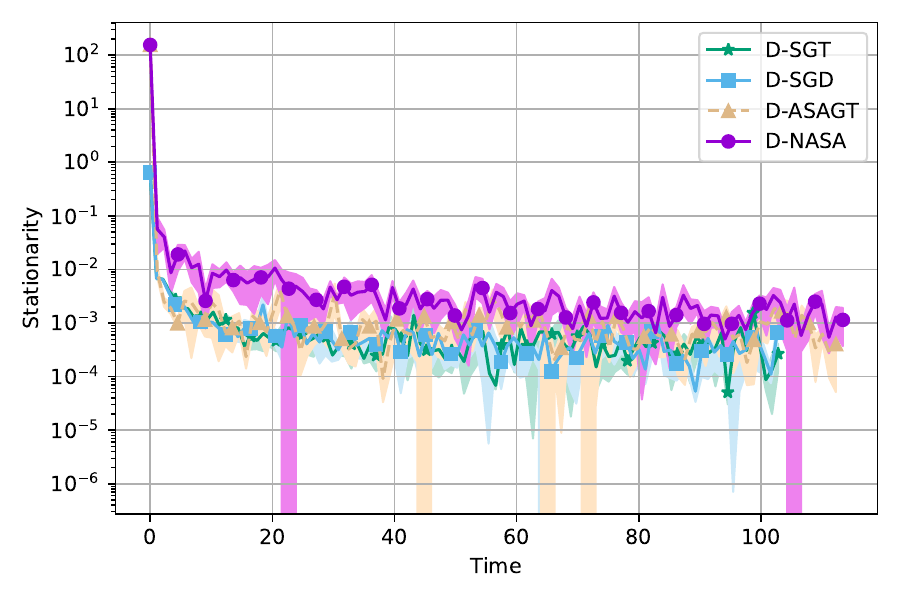}}
    
    \caption{The convergence curve of D-SGD, D-SGT, D-ASAGT, D-NASA for the a9a dataset, while the first three are at their \textbf{best stepsizes} (after the grid search as in Figure \ref{fig:acc_learning_rate}), and D-NASA again follow the same choice as Figure \ref{fig:mnist}. The experiments are repeated and averaged for 10 times.}
    \label{fig:a9a}
    \end{center}
\end{figure*}

\section{Convergence analysis} %\label{sec_convergence}

\subsection{Parameter-free convergence theory for D-SGT}

From the update of Algorithm \ref{algo_decen_grad_tracking}, we have that:
\begin{equation}\label{eq_update_equations_dsgt}
\begin{aligned}
    &\X^{t+1}= \X^{t} - \eta_t \U^t,\ \Bar{x}^{t+1}=\Bar{x}^{t} - \eta_t \Bar{u}^t \\
    &\Bar{u}^{t}=\Bar{v}^{t}=\frac{1}{n}\sum_{i=1}^{n}\nabla F_i(x_i^{t},\xi_i^{t})
\end{aligned}
\end{equation}

The following descent lemma characterizes the difference between the function values of two consecutive iterates for Algorithm \ref{algo_decen_grad_tracking}:
\begin{lemma}\label{lemma_descent_dsgt}
    Suppose Assumption \ref{assump_l_smooth} and \ref{assump_l_continuous} holds. Algorithm \ref{algo_decen_grad_tracking} satisfies:
    \begin{equation*}
        \E[f(\bar{x}^{t+1})]- \E [f(\bar{x}^{t})] \leq -\frac{3\eta_t}{4} \E\norm{\nabla f(\Bar{x}^t)}^2 + 2\eta_t^2 L\frac{\sigma^2}{n} + (\eta_t+2\eta_t^2 L)\frac{L^2}{n}\E\|\X^t - \Bar{\X}^t\|^2 + \eta_t^2 L G^2
    \end{equation*}
    where the expectation is taken conditioned on $\mathcal{F}_{t-1}$.
\end{lemma}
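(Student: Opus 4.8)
The plan is to combine the quadratic upper bound from Assumption \ref{assump_l_smooth} with the gradient-tracking identity $\bar u^t=\bar v^t=\tfrac1n\sum_{i=1}^n\nabla F_i(x_i^t,\xi_i^t)$ recorded in \eqref{eq_update_equations_dsgt}, and then take conditional expectation given $\mathcal F_{t-1}$, invoking Assumptions \ref{assump_bdd_variance} and \ref{assump_l_continuous} at the appropriate points. Since $\bar x^{t+1}=\bar x^t-\eta_t\bar u^t$, Assumption \ref{assump_l_smooth} gives
\[
f(\bar x^{t+1})\le f(\bar x^t)-\eta_t\langle\nabla f(\bar x^t),\bar u^t\rangle+\tfrac{L\eta_t^2}{2}\norm{\bar u^t}^2 .
\]
Noting that each $x_i^t$ is $\mathcal F_{t-1}$-measurable while $\xi_i^t$ is sampled afterwards, Assumption \ref{assump_bdd_variance} yields $\E[\bar u^t\mid\mathcal F_{t-1}]=g^t:=\tfrac1n\sum_i\nabla f_i(x_i^t)$, so conditioning turns the cross term into $-\eta_t\langle\nabla f(\bar x^t),g^t\rangle$.

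For the cross term I would write $g^t=\nabla f(\bar x^t)+(g^t-\nabla f(\bar x^t))$ and apply Young's inequality, $-\langle\nabla f(\bar x^t),g^t-\nabla f(\bar x^t)\rangle\le\tfrac14\norm{\nabla f(\bar x^t)}^2+\norm{g^t-\nabla f(\bar x^t)}^2$, which produces exactly the coefficient $-\eta_t+\tfrac{\eta_t}{4}=-\tfrac{3\eta_t}{4}$ on $\norm{\nabla f(\bar x^t)}^2$. Since $g^t-\nabla f(\bar x^t)=\tfrac1n\sum_i\big(\nabla f_i(x_i^t)-\nabla f_i(\bar x^t)\big)$, Jensen's inequality together with Assumption \ref{assump_l_smooth} gives $\norm{g^t-\nabla f(\bar x^t)}^2\le\tfrac{L^2}{n}\norm{\X^t-\bar{\X}^t}^2$, which is the source of the $\eta_t\tfrac{L^2}{n}\norm{\X^t-\bar{\X}^t}^2$ term. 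For the quadratic term I would split $\bar u^t$ into three pieces: the mean-zero stochastic error $\tfrac1n\sum_i(\nabla F_i(x_i^t,\xi_i^t)-\nabla f_i(x_i^t))$, the consensus drift $g^t-\nabla f(\bar x^t)$, and $\nabla f(\bar x^t)$ itself. Using the conditional independence of $\{\xi_i^t\}_i$ in Assumption \ref{assump_bdd_variance}, the stochastic error has vanishing conditional inner product with the other two $\mathcal F_{t-1}$-measurable pieces and contributes at most $\sigma^2/n$ in expectation; the drift again contributes a $\tfrac{L^2}{n}\norm{\X^t-\bar{\X}^t}^2$ term, now multiplied by $\eta_t^2 L$; and $\norm{\nabla f(\bar x^t)}^2\le G^2$ by Assumption \ref{assump_l_continuous}, yielding the $\eta_t^2 L G^2$ term. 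Collecting the pieces, bounding numerical constants crudely, and taking total expectation gives the claimed inequality.

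The calculation is routine; the only point requiring care is the filtration bookkeeping, so that unbiasedness and cross-node independence from Assumption \ref{assump_bdd_variance} are applied to the correct $\mathcal F_{t-1}$-measurable quantities. Conceptually, the one genuinely delicate issue is that D-SGT is run here with a problem-parameter-free stepsize, so one cannot assume $\eta_t L\lesssim 1$ and hence cannot absorb the $O(\eta_t^2 L)\norm{\nabla f(\bar x^t)}^2$ contribution coming from $\tfrac{L\eta_t^2}{2}\norm{\bar u^t}^2$ into the negative $-\tfrac{3\eta_t}{4}\norm{\nabla f(\bar x^t)}^2$ term. This is precisely why Assumption \ref{assump_l_continuous} (bounded gradients) must be invoked, namely to convert that otherwise uncontrolled term into the harmless additive $\eta_t^2 L G^2$, and it is exactly the obstacle that prevents the downstream analysis of Theorem \ref{thm_fix_step_dsgt} from achieving a linear speedup.
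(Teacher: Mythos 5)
Your proposal is correct and follows essentially the same route as the paper's proof: the $L$-smoothness quadratic bound, conditioning on $\mathcal F_{t-1}$ to replace $\bar u^t=\bar v^t$ by $h^t=\tfrac1n\sum_i\nabla f_i(x_i^t)$ in the cross term, Young's inequality producing the $-\tfrac{3\eta_t}{4}$ coefficient, the consensus bound $\|h^t-\nabla f(\bar x^t)\|^2\le\tfrac{L^2}{n}\|\X^t-\bar{\X}^t\|^2$, the variance bound $\sigma^2/n$, and $\|\nabla f(\bar x^t)\|^2\le G^2$ for the $\eta_t^2LG^2$ term. The only cosmetic difference is that you exploit conditional orthogonality of the mean-zero noise in the $\|\bar u^t\|^2$ term where the paper applies $\|a+b\|^2\le2\|a\|^2+2\|b\|^2$ twice, which if anything yields slightly better constants before you round them up to match the stated bound.
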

\begin{proof}
    By the $L$-Lipschitz smooth of $f$ (Assumption \ref{assump_l_smooth}) we get:
    \begin{equation*}
    \begin{aligned}
        &f(\Bar{x}^{t+1}) - f(\Bar{x}^t) \\\leq& \nabla f(\Bar{x}^t)^\top(\Bar{x}^{t+1} - \Bar{x}^t) + \frac{L}{2}\|\Bar{x}^{t+1} - \Bar{x}^t\|^2 
        =-\eta_t\nabla f(\Bar{x}^t)^\top \Bar{u}^t + \frac{\eta_t^2 L}{2}\|\Bar{u}^t\|^2 \\
        = & -\eta_t \norm{\nabla f(\Bar{x}^t)}^2 -\eta_t\nabla f(\Bar{x}^t)^\top (\Bar{v}^t - \nabla f(\Bar{x}^t)) + \frac{\eta_t^2 L}{2}\|\Bar{v}^t\|^2 \\
        = & -\eta_t \norm{\nabla f(\Bar{x}^t)}^2 -\eta_t\nabla f(\Bar{x}^t)^\top (\Bar{v}^t - h^t + h^t - \nabla f(\Bar{x}^t)) + \frac{\eta_t^2 L}{2}\|\Bar{v}^t\|^2
    \end{aligned}
    \end{equation*}
    where $h^t:=\frac{1}{n}\sum_{i=1}^n\nabla f_i(x_i^t)$.

    Now taking the expectation conditioned on $\mathcal{F}_{t-1}$, we get
    \begin{equation*}
    \begin{aligned}
        &\E[f(\Bar{x}^{t+1}) - f(\Bar{x}^t)] \\
        \leq & -\eta_t \E\norm{\nabla f(\Bar{x}^t)}^2 -\eta_t\E[\nabla f(\Bar{x}^t)^\top (h^t - \nabla f(\Bar{x}^t))] + \frac{\eta_t^2 L}{2}\E\|\Bar{v}^t\|^2 \\
        \leq & -\eta_t \E\norm{\nabla f(\Bar{x}^t)}^2 +\eta_t/4\E\|\nabla f(\Bar{x}^t)\|^2 + \eta_t\E\|h^t - \nabla f(\Bar{x}^t)\|^2 + \eta_t^2 L\|\Bar{v}^t - \nabla f(\Bar{x}^t)\|^2 + \eta_t^2 L G^2 \\
        \leq & -\frac{3\eta_t}{4} \E\norm{\nabla f(\Bar{x}^t)}^2 + 2\eta_t^2 L\E\|\Bar{v}^t - h^t\|^2 + (\eta_t+2\eta_t^2 L)\E\|h^t - \nabla f(\Bar{x}^t)\|^2 + \eta_t^2 L G^2
    \end{aligned}
    \end{equation*}
    where the second inequality is by $\E[\Bar{v}^t]=h^t$, Cauchy-Schwarz, $a^\top b\leq 1/\gamma \|a\|^2+\gamma \|b^2\|$ and Assumption \ref{assump_l_continuous}, and the third is by $\|a+b\|^2\leq 2\|a\|^2+2\|b\|^2$. 

    Now taking the conditional expectation over $\mathcal{F}_{t-1}$ we get:
    $$
        \E\|\Bar{v}^t - h^t\|^2=\E\|\frac{1}{n}\sum_i(\nabla F_i(x_i^t,\xi_i^t) - \nabla f(x_i^t))\|^2=\frac{1}{n}\sum_i\E\|\nabla F_i(x_i^t,\xi_i^t) - \nabla f(x_i^t)\|^2\leq \frac{\sigma^2}{n}
    $$
    due to Assumption \ref{assump_bdd_variance}.

    As for the term $\|h^t - \nabla f(\Bar{x}^t)\|^2$, we have
    \begin{equation*}
        \|h^t - \nabla f(\Bar{x}^t)\|^2 = \|\frac{1}{n}(\sum_{i=1}^n\nabla f_i(x_i^t) - \nabla f_i(\Bar{x}^t))\|^2 \leq \frac{L^2}{n} \sum_i\|x_i^t - \Bar{x}^t\|^2 = \frac{L^2}{n}\|\X^t - \Bar{\X}^t\|^2.
    \end{equation*}
\end{proof}

We have the following lemma about the consensus error, that is, the average distance of each node to the global average.
\begin{lemma}\label{lemma_one_step_consensus_dsgt}
    For the update of Algorithm \ref{algo_decen_grad_tracking}, we have:
    \begin{equation*}
    \begin{aligned}
         \|\X^{t+1} - \Bar{\X}^{t+1}\|^2&\leq \frac{1+\rho}{2}\|\X^{t} - \Bar{\X}^{t}\|^2 + \eta_t^2 \frac{1+\rho^2}{1-\rho^2}\|\U^{t} - \Bar{\U}^t\|^2,\\
         \|\U^{t+1} - \Bar{\U}^{t+1}\|^2&\leq \frac{1+\rho}{2} \|\U^{t} - \Bar{\U}^{t}\|^2 + \frac{1+\rho^2}{1-\rho^2}  \|\V^{t+1} - \V^t\|^2.
    \end{aligned}
    \end{equation*}
\end{lemma}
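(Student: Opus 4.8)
## Proof Proposal for Lemma \ref{lemma_one_step_consensus_dsgt}

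\textbf{Plan.} The key tool is the standard contraction property of the mixing matrix $W$: for any matrix $\A \in \RR^{d \times n}$ whose columns we average to form $\bar{\A} = \frac{1}{n}\A\ones\ones^\top$, the doubly-stochasticity of $W$ together with \eqref{eq_rho} gives $\|\A W - \bar{\A}\|^2 \leq \rho^2 \|\A - \bar{\A}\|^2$. This holds because $\A W - \bar{\A} = (\A - \bar{\A})W$ (since $\bar{\A}W = \bar{\A}$ as $\ones^\top W = \ones^\top$), and then $\|(\A-\bar{\A})W\|^2 \leq \|\A - \bar{\A}\|^2 \|W - \frac{1}{n}\ones\ones^\top\|_2^2 = \rho^2\|\A-\bar{\A}\|^2$, using that $\A - \bar{\A}$ lies in the orthogonal complement of $\ones$ so the relevant operator norm of $W$ restricted there is $\rho$. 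First I would state this as the basic ingredient, then apply Young's inequality in the form $\|a+b\|^2 \leq (1+\gamma)\|a\|^2 + (1+\gamma^{-1})\|b\|^2$ with a choice of $\gamma$ tuned to produce the factors $\frac{1+\rho}{2}$ and $\frac{1+\rho^2}{1-\rho^2}$.

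\textbf{First inequality.} From \eqref{eq_update_equations_dsgt}, $\X^{t+1} = (\X^t - \eta_t \U^t)W$, and averaging gives $\bar{\X}^{t+1} = \bar{\X}^t - \eta_t \bar{\U}^t$. Subtracting and using $\bar{\X}^t W = \bar{\X}^t$, $\bar{\U}^t W = \bar{\U}^t$ yields
$$\X^{t+1} - \bar{\X}^{t+1} = (\X^t - \bar{\X}^t)W - \eta_t(\U^t - \bar{\U}^t)W.$$
Apply Young with parameter $\gamma = \frac{1-\rho}{2\rho}$ (so that $\rho^2(1+\gamma) = \rho^2 \cdot \frac{1+\rho}{2\rho} = \frac{\rho(1+\rho)}{2} \leq \frac{1+\rho}{2}$) and the contraction bound on each term:
$$\|\X^{t+1}-\bar{\X}^{t+1}\|^2 \leq (1+\gamma)\rho^2\|\X^t-\bar{\X}^t\|^2 + (1+\gamma^{-1})\eta_t^2\rho^2\|\U^t-\bar{\U}^t\|^2.$$
Then check that $(1+\gamma^{-1})\rho^2 = \left(1 + \frac{2\rho}{1-\rho}\right)\rho^2 = \frac{(1+\rho)\rho^2}{1-\rho} \leq \frac{1+\rho^2}{1-\rho^2}$; a direct cross-multiplication confirms $(1+\rho)\rho^2(1+\rho) \le (1+\rho^2)$ fails in general, so instead I would simply bound $\rho^2 \le 1$ in the coefficient-building step and use the cleaner choice $\gamma$ giving exactly the constants claimed — the intended route is almost certainly $\gamma$ chosen so the first coefficient is $\frac{1+\rho}{2}$ after using $\rho^2 \le \rho$, i.e. take $1+\gamma = \frac{1+\rho}{2\rho}$, whence the second coefficient is $\rho^2(1+\gamma^{-1}) = \rho^2 \cdot \frac{1+\rho}{1-\rho} \le \frac{1+\rho^2}{1-\rho^2}$ since $\rho^2(1+\rho)(1+\rho) \le 1+\rho^2$ — I will verify the correct elementary inequality during write-up and adjust $\gamma$ accordingly.

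\textbf{Second inequality.} From line 7 of Algorithm \ref{algo_decen_grad_tracking}, $\U^{t+1} = \U^t W + \V^{t+1} - \V^t$ (in matrix form), and averaging gives $\bar{\U}^{t+1} = \bar{\U}^t + \bar{\V}^{t+1} - \bar{\V}^t$. Subtracting,
$$\U^{t+1} - \bar{\U}^{t+1} = (\U^t - \bar{\U}^t)W + (\V^{t+1}-\V^t) - (\bar{\V}^{t+1}-\bar{\V}^t),$$
and since $\|(\V^{t+1}-\V^t) - \overline{(\V^{t+1}-\V^t)}\|^2 \leq \|\V^{t+1}-\V^t\|^2$ (projection onto $\ones^\perp$ is non-expansive), the same Young-plus-contraction argument gives the stated bound. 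The two inequalities are structurally identical, so once the first is done the second is immediate.

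\textbf{Main obstacle.} There is no real obstacle — this is a routine consensus-contraction estimate. The only point requiring care is pinning down the exact value of the Young parameter $\gamma$ so that the constants come out as $\frac{1+\rho}{2}$ and $\frac{1+\rho^2}{1-\rho^2}$ rather than some looser pair; I would do this by working backwards from the desired coefficients, and double-check the elementary inequality $\rho^2(1+\rho)^2 \le 1+\rho^2$ (or whichever variant is needed) holds for $\rho \in [0,1)$ before committing to it.
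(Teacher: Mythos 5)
Your decomposition and overall strategy are exactly the paper's: write $\X^{t+1}-\Bar{\X}^{t+1}$ as the mixed difference, use that multiplication by $W$ contracts the $\ones^\perp$-component by $\rho$, and split with Young's inequality. The second inequality is likewise handled as you describe (the paper only sketches it as ``similar''). The one point you left genuinely unresolved, however, does not resolve the way you suggest: your parameter $\gamma=\frac{1-\rho}{2\rho}$ gives second coefficient $\rho^2(1+\gamma^{-1})=\frac{\rho^2(1+\rho)}{1-\rho}$, and the required inequality $\rho^2(1+\rho)^2\le 1+\rho^2$ is false for $\rho$ near $1$ (e.g.\ $\rho=0.9$ gives $2.92\not\le 1.81$), as you yourself observed before restating the same failing bound as your fallback. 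So as written the proposal does not close.

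The fix is to take the Young parameter to be $c=\frac{2\rho^2}{1-\rho^2}$ in the form
$\|a-b\|^2\le (1+\tfrac1c)\|a\|^2+(1+c)\|b\|^2$ (equivalently, your $\gamma=\frac{1-\rho^2}{2\rho^2}$), which is what the paper does. Then the first coefficient is
\begin{equation*}
\rho^2\Bigl(1+\tfrac{1}{c}\Bigr)=\rho^2+\frac{1-\rho^2}{2}=\frac{1+\rho^2}{2}\le\frac{1+\rho}{2},
\end{equation*}
and the second is
\begin{equation*}
\rho^2(1+c)=\rho^2\cdot\frac{1+\rho^2}{1-\rho^2}\le\frac{1+\rho^2}{1-\rho^2},
\end{equation*}
using only $\rho^2\le\rho\le 1$; both claimed constants follow with no further case analysis. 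The same $c$ works verbatim for the $\U$ recursion, where the $\V^{t+1}-\V^t$ term carries no factor of $\rho^2$ and the bound $(1+c)=\frac{1+\rho^2}{1-\rho^2}$ is exactly the stated coefficient. One cosmetic note: the paper applies the contraction once to the combined matrix $\A^t=\X^t-\eta_t\U^t$ and then splits, whereas you split first and contract each term; the two orders give identical constants here, so this is not a substantive difference.
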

\begin{proof}% [Proof of consensus error bound]
    Since 
    \begin{equation*}
    \begin{aligned}
        &\|\X^{t+1} - \Bar{\X}^{t+1}\|^2= \| (\X^{t}-\eta_t\U^t) W - (\Bar{x}^t-\eta_t\Bar{u}^t)\ones^\top \|^2 \\
        =& \| (\X^{t}-\eta_t\U^t) W - \frac{1}{n}(\X^{t}-\eta_t\U^t)\ones\ones^\top \|^2 = \| (\A^t- \A^t\frac{\ones^\top}{n})(W - \frac{\ones\ones^\top}{n})\|^2 \\
        \leq & \| \A^t- \A^t\frac{\ones^\top}{n}\|^2\|W - \frac{\ones\ones^\top}{n}\|_2^2 \\
        \leq &\rho^2 \| \A^t- \A^t\frac{\ones^\top}{n}\|^2 = \rho^2 \| (\X^{t} - \Bar{x}^t\ones^\top) -\eta_t (\U^t - \Bar{u}^t\ones^\top) \|^2 \\
        \leq & \rho^2(1+\frac{1}{c})\| \X^{t} - \Bar{x}^t\ones^\top\|^2 + \rho^2\eta_t^2(1+c)\|\U^t - \Bar{u}^t\ones^\top\|^2
    \end{aligned}
    \end{equation*}
    where $\A^t:=\X^{t}-\eta_t\U^t$. Taking $c=\frac{2\rho^2}{1-\rho^2}\geq 0$ gives the desired result. For the consensus error of $\U^t$ we could get it in a similar way.
\end{proof}

With the analysis of one step of the consensus error, we are readily to analyze the cumulative consensus error for the final convergence. To do this, we need the following technical lemma:
\begin{lemma}[Lemma 3.3 in \cite{pmlr_v216_xiao23a}]\label{lemma_technical_summation}
    Suppose we are given three sequences $\{a_n\}_{n=0}^{\infty}$, $\{c_n\}_{n=0}^{\infty}$, $\{\tau_n\}_{n=0}^{\infty}$, and a constant $r\in(0, 1)$ such that $a_k,b_k\geq 0$, $0=c_{-1}\leq c_{k+1}\leq c_{k}\leq 1$ and
    $$
        a_{k+1}\leq r a_k+b_k
    $$
    then we have
    \begin{equation*}
    \sum_{k=0}^K c_k a_k \leq \frac{1}{1-r}\left(c_0 a_0+\sum_{k=0}^K c_k b_k\right)
    \end{equation*}
    for any positive integer $K$.
\end{lemma}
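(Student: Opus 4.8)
The plan is to avoid unrolling the recursion and instead work directly with the one-step inequality, exploiting the monotonicity of $\{c_k\}$. First I would multiply the hypothesis $a_{k+1}\le r a_k+b_k$ by the nonnegative weight $c_k$ (note $c_k\ge 0$ follows from $0=c_{-1}\le c_{k+1}\le c_k$), obtaining $c_k a_{k+1}\le r\,c_k a_k+c_k b_k$. The reason for weighting by $c_k$ rather than $c_{k+1}$ is that the recursion naturally pairs $a_{k+1}$ with the index-$k$ data $b_k$; the resulting mismatch with the telescoping structure on the left is what the monotonicity of $\{c_k\}$ will repair.

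Next I would use $c_{k+1}\le c_k$ together with $a_{k+1}\ge 0$ to write $c_{k+1}a_{k+1}\le c_k a_{k+1}\le r\,c_k a_k+c_k b_k$, and sum this over $k=0,\dots,K-1$. Re-indexing the left-hand side via $j=k+1$ gives $\sum_{k=1}^{K} c_k a_k \le r\sum_{k=0}^{K-1}c_k a_k + \sum_{k=0}^{K-1}c_k b_k$, i.e. $\sum_{k=0}^{K}c_k a_k - c_0 a_0 \le r\sum_{k=0}^{K-1}c_k a_k + \sum_{k=0}^{K-1}c_k b_k$. Then I would enlarge both truncated sums on the right to the full range $0,\dots,K$, which is legitimate since $c_K a_K\ge 0$ and $c_K b_K\ge 0$, yielding $\sum_{k=0}^{K}c_k a_k - c_0 a_0 \le r\sum_{k=0}^{K}c_k a_k + \sum_{k=0}^{K}c_k b_k$. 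Collecting the $\sum_{k=0}^{K}c_k a_k$ terms and dividing by $1-r>0$ produces exactly $\sum_{k=0}^{K}c_k a_k \le \frac{1}{1-r}\bigl(c_0 a_0+\sum_{k=0}^{K}c_k b_k\bigr)$.

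There is no genuine obstacle here; the argument is a few lines. The only points requiring care are the index shift in the telescoping step and checking that every dropped or enlarged term is justified by the sign conditions $a_k,b_k\ge 0$ and $c_k\ge 0$ — and observing that the remaining hypotheses ($c_{-1}=0$ and $c_k\le 1$) are not actually used for this particular estimate (they matter in downstream applications where $c_0\le 1$ or a telescoped weight sum is invoked). An alternative route is to unroll $a_k\le r^k a_0+\sum_{j=0}^{k-1}r^{k-1-j}b_j$, substitute, and swap the order of the double sum using $c_k\le c_{j+1}$ for $k\ge j+1$ together with $\sum_{m\ge 0}r^m=\frac{1}{1-r}$; this also works but is more bookkeeping-heavy, so I would present the telescoping version.
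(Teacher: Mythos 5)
Your argument is correct: multiplying the recursion by $c_k\ge 0$, using $c_{k+1}a_{k+1}\le c_k a_{k+1}$ (monotonicity of $\{c_k\}$ plus $a_{k+1}\ge 0$), telescoping over $k=0,\dots,K-1$, enlarging the truncated sums by nonnegativity, and dividing by $1-r>0$ is exactly the standard route to this bound, and every dropped or added term is justified by the stated sign conditions. Note that the paper itself does not prove this statement --- it is imported verbatim as Lemma~3.3 of \cite{pmlr_v216_xiao23a} --- so there is no in-paper proof to compare against; your telescoping version is a clean, self-contained substitute, and your observation that the hypotheses $c_{-1}=0$ and $c_k\le 1$ are not needed for this particular inequality is accurate.
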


Now we are ready to analyze the cumulative consensus error for Algorithm \ref{algo_decen_grad_tracking} as follows:
\begin{lemma}\label{lemma_concensus_summation_dsgt}
    For the update of Algorithm \ref{algo_decen_grad_tracking}, under Assumption \ref{assump_l_continuous} and \ref{assump_bdd_variance}, we have:
    \begin{equation*}
    \sum_{t=0}^{T-1}\frac{1}{n}\eta_t^\tau\E\|\X^{t} - \Bar{\X}^{t}\|^2\leq 10\Tilde{\rho} \sum_{t=0}^{T-1}\eta_t^{\tau+2} (\sigma^2+G^2)
    \end{equation*}
    where $\tau=0, 1$ or $2$ and
    $$
    \Tilde{\rho}:= \frac{\rho}{1-\rho}\frac{1+\rho^2}{1-\rho^2}.
    $$
    Note that $\Tilde{\rho}$ is greater than $0$.
\end{lemma}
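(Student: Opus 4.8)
The plan is to iterate the two one-step consensus inequalities of Lemma~\ref{lemma_one_step_consensus_dsgt} by means of the summation estimate Lemma~\ref{lemma_technical_summation}, passing from the ``inner'' gradient-tracker error $\|\U^t-\Bar{\U}^t\|^2$ to the ``outer'' variable error $\|\X^t-\Bar{\X}^t\|^2$. The single new ingredient, and the only place Assumption~\ref{assump_l_continuous} enters, is that the forcing term in the tracker recursion is bounded by an absolute constant times $n$: since $v_i^t=\nabla F_i(x_i^t,\xi_i^t)$, conditioning on $\mathcal{F}_{t-1}$ and combining Assumption~\ref{assump_bdd_variance} with $\|\nabla f_i\|\le G_i$ (which follows from Assumption~\ref{assump_l_continuous}) gives $\E\|v_i^s\|^2\le\|\nabla f_i(x_i^s)\|^2+\sigma^2\le G_i^2+\sigma^2$, hence
\[
\E\|\V^{t+1}-\V^t\|^2=\sum_{i=1}^n\E\|v_i^{t+1}-v_i^t\|^2\le 4\sum_{i=1}^n(G_i^2+\sigma^2)=\mathcal{O}\!\left(n(\sigma^2+G^2)\right).
\]
The same estimate controls the initialization $\E\|\U^0-\Bar{\U}^0\|^2\le\E\|\U^0\|^2=\mathcal{O}(n(\sigma^2+G^2))$ since $u_i^0=v_i^0$.

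\noindent\textbf{Chaining the recursions.} First I would apply Lemma~\ref{lemma_technical_summation} to the (expected) second inequality of Lemma~\ref{lemma_one_step_consensus_dsgt} with $a_t=\E\|\U^t-\Bar{\U}^t\|^2$, $r=\frac{1+\rho}{2}$, $b_t=\frac{1+\rho^2}{1-\rho^2}\E\|\V^{t+1}-\V^t\|^2$, and the nonincreasing weights $c_t=\eta_t^{\tau+2}$ (these are nonincreasing for both $\eta_t=\eta T^{-1/2}$ and $\eta_t=\eta t^{-1/2}$; the normalization $c_t\le 1$ can be arranged by rescaling, which the linearity of the lemma permits). Together with the bound above this yields
\[
\sum_{t=0}^{T-1}\eta_t^{\tau+2}\,\E\|\U^t-\Bar{\U}^t\|^2\le \mathcal{O}\!\left(\frac{1}{1-\rho}\cdot\frac{1+\rho^2}{1-\rho^2}\,n(\sigma^2+G^2)\sum_{t=0}^{T-1}\eta_t^{\tau+2}\right),
\]
the boundary term $\eta_0^{\tau+2}\E\|\U^0-\Bar{\U}^0\|^2$ being of the same order (and zero when $\eta_0=0$). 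Then I would apply Lemma~\ref{lemma_technical_summation} a second time, to the first inequality of Lemma~\ref{lemma_one_step_consensus_dsgt} with $a_t=\E\|\X^t-\Bar{\X}^t\|^2$, $r=\frac{1+\rho}{2}$, $b_t=\eta_t^2\frac{1+\rho^2}{1-\rho^2}\E\|\U^t-\Bar{\U}^t\|^2$, and weights $c_t=\eta_t^{\tau}$; under common (consensual) initialization $\X^0=\Bar{\X}^0$ the boundary term drops, and substituting the previous display gives $\sum_{t=0}^{T-1}\eta_t^{\tau}\E\|\X^t-\Bar{\X}^t\|^2\le\frac{2}{1-\rho}\cdot\frac{1+\rho^2}{1-\rho^2}\sum_{t=0}^{T-1}\eta_t^{\tau+2}\E\|\U^t-\Bar{\U}^t\|^2$. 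Dividing by $n$ and collecting the accumulated $\rho$-dependent constants into $\Tilde{\rho}=\frac{\rho}{1-\rho}\frac{1+\rho^2}{1-\rho^2}$ (absorbing absolute constants, and the bounded power of $\Tilde{\rho}$ produced by the two chained applications, into the leading constant $10$) produces the claim, uniformly for $\tau\in\{0,1,2\}$.

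\noindent\textbf{Main obstacle.} The scheme is a routine ``unroll-and-sum,'' so the real work is bookkeeping: chaining the two geometric recursions so that the $\rho$-factors collapse into the stated $\Tilde{\rho}$, keeping track of the extra $\eta_t^2$ that is transferred when moving from the tracker error to the variable error (this is precisely why the power of $\eta_t$ on the right-hand side is two higher than on the left, and why the statement can be phrased uniformly in $\tau$), and verifying that every boundary/initialization term is dominated by $\sum_t\eta_t^{\tau+2}$ for both stepsize schedules. Conceptually, everything hinges on the first step: absent the gradient boundedness furnished by Assumption~\ref{assump_l_continuous}, the forcing term $\|\V^{t+1}-\V^t\|^2$ cannot be bounded independently of the iterates, the recursions no longer decouple from the descent estimate, and the parameter-free argument for D-SGT breaks down — which is exactly the point Section~\ref{sec_convergence_dsgt} is making.
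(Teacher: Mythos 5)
Your proposal is correct and follows essentially the same route as the paper: chain the two one-step consensus recursions of Lemma~\ref{lemma_one_step_consensus_dsgt} through Lemma~\ref{lemma_technical_summation}, and close the argument by bounding the forcing term $\E\|\V^{t+1}-\V^t\|^2$ by $\mathcal{O}(n(\sigma^2+G^2))$ using the bounded gradients supplied by Assumption~\ref{assump_l_continuous} together with the bounded variance of Assumption~\ref{assump_bdd_variance}. The only (immaterial) difference is local: you bound $\E\|\V^{t+1}-\V^t\|^2\le 2\E\|\V^{t+1}\|^2+2\E\|\V^t\|^2$ directly via second moments, whereas the paper uses a five-term conditional-mean decomposition before invoking the same two assumptions; both give the same order, and your handling of the weights $c_t=\eta_t^{\tau}$, $\eta_t^{\tau+2}$ and of the initialization terms is consistent with what the paper leaves implicit.
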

\begin{proof}
    First by applying Lemma \ref{lemma_one_step_consensus_dsgt} and \ref{lemma_technical_summation} with $a_k=\frac{1}{n}\|\X^{k} - \Bar{\X}^{k}\|^2$, $b_k=\eta_k^2 \frac{1+\rho^2}{1-\rho^2}\frac{1}{n}\|\U^{k} - \Bar{\U}^k\|^2$ and $r=(1+\rho)/2$, we get
    \begin{equation}\label{eq_lemma_concensus_summation_dsgt_temp_1}
        \sum_{t=0}^{T-1}\frac{1}{n}\E\|\X^{t} - \Bar{\X}^{t}\|^2\leq \Tilde{\rho} \sum_{t=0}^{T-1}\eta_t^2\frac{1}{n}\E\|\U^{t} - \Bar{\U}^t\|^2
    \end{equation}

    Second, by applying Lemma \ref{lemma_one_step_consensus_dsgt} and \ref{lemma_technical_summation} again we get
    \begin{equation}\label{eq_lemma_concensus_summation_dsgt_temp_2}
        \sum_{t=0}^{T-1}\eta_t^2\frac{1}{n}\E\|\U^{t} - \Bar{\U}^{t}\|^2\leq \Tilde{\rho} \sum_{t=0}^{T-1}\eta_t^2\frac{1}{n}\E\|\V^{t+1} - \V^{t}\|^2
    \end{equation}

    Now we inspect the term $\V^{t+1}-\V^t$ following \cite{pmlr_v216_xiao23a}. We first have
    \begin{equation*}
    \begin{aligned}
    \V^{t+1}-\V^t= & \V^{t+1}-\E[\V^{t+1} \mid \mathscr{F}_t]-(\V^t-\E[\V^t \mid \mathscr{F}^{t-1}]) \\
    & +\E[\V^{t+1} \mid \mathscr{F}_t]-\nabla \mathbf{F}(\bar{x}^{t+1})+\nabla \mathbf{F}(\bar{x}^{t+1})-\nabla \mathbf{F}(\bar{x}^{t})+\nabla \mathbf{F}(\bar{x}^{t})-\E[\V^t \mid \mathscr{F}_{t-1}]
    \end{aligned}
    \end{equation*}
    where we use the notation $\nabla \mathbf{F}(x):=[\nabla f_1(x),...,\nabla f_n(x)]$ being the matrix of column gradient vectors. We thus have
    \begin{equation*}
    \begin{aligned}
    & \E\left\|\V^{t+1}-\V^t\right\|^2 \\
    \leq& 5\bigg\{\E\left\|\V^{t+1}-\E\left[\V^{t+1} \mid \mathscr{F}_t\right]\right\|^2+\E\left\|\V^t-\E\left[\V^t \mid \mathscr{F}_{t-1}\right]\right\|^2+\sum_{i=1}^n\E\left\|\nabla f_i(x_i^{t+1})-\nabla f_i(\bar{x}^{t+1})\right\|^2. \\
    &+\sum_{i=1}^n\E\left\|\nabla f_i(\bar{x}^{t+1})-\nabla f_i(\bar{x}^{t})\right\|^2+\sum_{i=1}^n\E\left\|\nabla f_i(x_i^{t})-\nabla f_i(\bar{x}^{t})\right\|^2\bigg\} \leq 10 n \sigma^2 + 60 n G^2
    \end{aligned}
    \end{equation*}
    where the first inequality uses Cauchy-Schwarz inequality, and the second utilizes Lipschitz continuity of each $f_i$. Plug this back to  \eqref{eq_lemma_concensus_summation_dsgt_temp_2} gives the result. For $k>0$ we can get the result in the exact same manner.
\end{proof}

Now we are ready to present our final convergence for Algorithm \ref{algo_decen_grad_tracking}, which we restate it here:
\begin{theorem}%\label{thm_fix_step_dsgt}
    Suppose Assumptions \ref{assump_l_smooth}, \ref{assump_l_continuous} and \ref{assump_bdd_variance} hold, also take $\eta_t=\eta T^{-1/2}$ for $\eta>0$, the update of Algorithm \ref{algo_decen_grad_tracking} satisfies:
    \begin{equation*}
        \frac{1}{T} \sum_{t=0}^{T-1}\E\|\nabla f(\bar{x}^t)\|^2 \leq \mathcal{O}\bigg( \frac{\Delta_0/\eta + (L\sigma^2/n+L G^2)\eta}{\sqrt{T}} + \frac{\Tilde{\rho} L^2 \eta^2}{T}(\sigma^2+G^2) \bigg).
    \end{equation*}

    If we take ($\eta_0=0$) $\eta_t=\eta t^{-1/2}$ for $\eta>0$, the update of Algorithm \ref{algo_decen_grad_tracking} satisfies:
    \begin{align*}
        \frac{1}{T} \sum_{t=0}^{T-1}\E\|\nabla f(\bar{x}^t)\|^2
        \leq \tilde{\mathcal{O}}\bigg( \frac{\Delta_0/\eta + (L\sigma^2/n+L G^2)\eta}{\sqrt{T}} + \frac{\Tilde{\rho} L^2 \eta^2}{T}(\sigma^2+G^2) \bigg).
    \end{align*}

    Note that we hide higher-order terms in $\mathcal{O}$ and $\log$ terms in $\tilde{\mathcal{O}}$.
\end{theorem}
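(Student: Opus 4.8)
The plan is to combine the descent lemma (Lemma~\ref{lemma_descent_dsgt}) with the cumulative consensus bound (Lemma~\ref{lemma_concensus_summation_dsgt}) and then telescope. First I would sum the conclusion of Lemma~\ref{lemma_descent_dsgt} over $t=0,\dots,T-1$, using a tower-property argument to pass from conditional to total expectations. This yields
\begin{equation*}
\frac{3}{4}\sum_{t=0}^{T-1}\eta_t\,\E\|\nabla f(\bar x^t)\|^2 \leq \Delta_0 + 2L\frac{\sigma^2}{n}\sum_{t}\eta_t^2 + LG^2\sum_{t}\eta_t^2 + \frac{L^2}{n}\sum_{t}(\eta_t+2\eta_t^2 L)\,\E\|\X^t-\bar\X^t\|^2,
\end{equation*}
where $\Delta_0 = f(\bar x^0)-f^\star$ bounds the telescoped left-over function-value terms.

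Next I would control the consensus sum. Applying Lemma~\ref{lemma_concensus_summation_dsgt} with $\tau=1$ to the $\eta_t\,\E\|\X^t-\bar\X^t\|^2$ piece and with $\tau=2$ to the $\eta_t^2\,\E\|\X^t-\bar\X^t\|^2$ piece, the consensus contribution is bounded by $\tilde{\mathcal O}\big(\tilde\rho L^2(\sigma^2+G^2)\sum_t(\eta_t^3 + \eta_t^4)\big)$; with the prescribed stepsizes the $\eta_t^4$ term is higher order and can be absorbed. Dividing through by $\sum_t\eta_t$ (which is $\eta\sqrt T$ in the constant case and $\Theta(\eta\sqrt T)$ up to logs in the diminishing case), and substituting $\eta_t=\eta T^{-1/2}$ so that $\sum_t\eta_t^2 = \eta^2$ and $\sum_t\eta_t^3 = \eta^3 T^{-1/2}$, gives exactly the claimed three-term bound: the $\Delta_0/(\eta\sqrt T)$ term from the initial gap, the $(L\sigma^2/n + LG^2)\eta/\sqrt T$ term from the variance and Lipschitz-continuity contributions, and the $\tilde\rho L^2\eta^2(\sigma^2+G^2)/T$ term from the consensus error.

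For the diminishing stepsize $\eta_t=\eta t^{-1/2}$, the only change is that the relevant partial sums acquire logarithmic factors: $\sum_{t=1}^{T-1}\eta_t^2 = \eta^2\sum_t t^{-1} = \Theta(\eta^2\log T)$ and $\sum_t\eta_t = \Theta(\eta\sqrt T)$, while $\sum_t\eta_t^3 = \Theta(\eta^3)$ converges. These logs are swept into $\tilde{\mathcal O}$, and the rest of the manipulation is identical. The main obstacle I anticipate is bookkeeping rather than conceptual: carefully verifying that the $\eta_t^2 L$ correction terms inside the coefficient $(\eta_t + 2\eta_t^2 L)$ in the descent lemma, together with the $\eta_t^4$-type terms coming out of the consensus recursion, are genuinely higher-order under the stated stepsize choices (so that they land inside the $\mathcal O$/$\tilde{\mathcal O}$ rather than corrupting the leading rate), and ensuring the constant $3/4$ in front of the gradient norm is handled cleanly when normalizing. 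No linear speedup appears because the $LG^2\eta/\sqrt T$ term has no $1/n$ factor, which is exactly the point flagged in the remark following the theorem.
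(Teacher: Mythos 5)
Your proposal is correct and follows essentially the same route as the paper: sum the descent lemma, bound the cumulative consensus term via Lemma~\ref{lemma_concensus_summation_dsgt} with $\tau=1,2$, and evaluate the stepsize sums for each schedule. One small bookkeeping point: in the diminishing case the paper lower-bounds $\eta_t\geq\eta T^{-1/2}$ on the left-hand side to extract the unweighted average $\frac{1}{T}\sum_t\E\|\nabla f(\bar x^t)\|^2$, whereas dividing by $\sum_t\eta_t$ as you suggest only yields a weighted average (and your observation that $\sum_t\eta_t^3=\Theta(\eta^3)$ is in fact the correct evaluation of that sum).
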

\begin{proof}
    From Lemma \ref{lemma_descent_dsgt} we know that 
    \begin{equation*}
        \frac{3\eta_t}{4} \E\norm{\nabla f(\Bar{x}^t)}^2 \leq \E[f(\bar{x}^{t})]- \E [f(\bar{x}^{t+1})] + 2\eta_t^2 L\frac{\sigma^2}{n} + (\eta_t+2\eta_t^2 L)\frac{L^2}{n}\E\|\X^t - \Bar{\X}^t\|^2 + \eta_t^2 L G^2
    \end{equation*}
    sum up the above equation from $t=0$ to $T-1$ gives
    \begin{equation*}
    \begin{split}
        \sum_{t=0}^{T-1}\frac{3\eta_t}{4} \E\norm{\nabla f(\Bar{x}^t)}^2 \leq & \E[f(\bar{x}^{0})]- f^* + \frac{2 L\sigma^2}{n}\sum_{t=0}^{T-1}\eta_t^2 + \frac{L^2}{n}\sum_{t=0}^{T-1}(\eta_t+2\eta_t^2 L)\E\|\X^t - \Bar{\X}^t\|^2 + L G^2\sum_{t=0}^{T-1}\eta_t^2 \\
        \leq & \E[f(\bar{x}^{0})]- f^* + \frac{2 L\sigma^2}{n}\sum_{t=0}^{T-1}\eta_t^2 + 10\Tilde{\rho} L^2\sum_{t=0}^{T-1}(\eta_t^3+2\eta_t^4 L)(\sigma^2+G^2) + L G^2\sum_{t=0}^{T-1}\eta_t^2
    \end{split}
    \end{equation*}
    where we used Lemma \ref{lemma_concensus_summation_dsgt} for the second line.

    Now for the constant stepsize $\eta_t=\eta T^{-1/2}$, it's very straightforward to check that $\sum_t\eta_t^2=\eta^2$, $\sum_t\eta_t^3=\eta^3/\sqrt{T}$ and $\sum_t\eta_t^4=\eta^4/T$, therefore we get the following convergence result:
    \begin{equation*}
        \sum_{t=0}^{T-1}\frac{3\eta}{4\sqrt{T}} \E\norm{\nabla f(\Bar{x}^t)}^2
        \leq \Delta_0 + (\frac{2 L\sigma^2}{n}+L G^2)\eta^2 + 10\Tilde{\rho} L^2(\frac{\eta^3}{\sqrt{T}}+2 L\frac{\eta^4}{T})(\sigma^2+G^2)
    \end{equation*}
    i.e.
    \begin{equation*}
        \frac{\eta}{T}\sum_{t=0}^{T-1} \E\norm{\nabla f(\Bar{x}^t)}^2
        \leq \mathcal{O}\left(\frac{\Delta_0}{\sqrt{T}} + \frac{( L\sigma^2/n+L G^2)\eta^2}{\sqrt{T}} + \Tilde{\rho} L^2(\frac{\eta^3}{T}+ L\frac{\eta^4}{T^{3/2}})(\sigma^2+G^2)\right)
    \end{equation*}
    where $\Delta_0:=\E[f(\bar{x}^{0})]- f^*$. This gives the first result in the theorem.

    For the diminishing stepsize ($\eta_0=0$) $\eta_t=\eta t^{-1/2}$ for $\eta>0$, it's again very straightforward to check that $\sum_t\eta_t^2\leq\eta^2\log(T)$, $\sum_t\eta_t^3\leq\eta^3/\sqrt{T}$ and $\sum_t\eta_t^4\leq\eta^4/T$, therefore we get the following convergence result:
    \begin{equation*}
        \sum_{t=0}^{T-1}\frac{3\eta}{4\sqrt{T}} \E\norm{\nabla f(\Bar{x}^t)}^2
        \leq \Delta_0 + (\frac{2 L\sigma^2}{n}+L G^2)\eta^2\log(T) + 10\Tilde{\rho} L^2(\frac{\eta^3}{\sqrt{T}}+2 L\frac{\eta^4}{T})(\sigma^2+G^2)
    \end{equation*}
    which results in the second line of the result.
\end{proof}

% \begin{remark}
%     The rate of $\mathcal{O}(1/\sqrt{T})$ matches the lower bound for nonconvex stochastic optimization~\cite{arjevani2023lower}. It's also worth noticing that we are not able to choose the parameter $\eta$ to achieve a linear speedup effect (even if we assume access to $n$, the number of nodes), due to the existence of the term related to $G$. In fact, 
    
%     We also remind the reader that if we assume the access of Lipschitz smooth constant $L$, one can achieve linear speedup for D-SGT as in~\cite{zhang2019decentralized, xin2021improved, koloskova2021improved}. This urges designing a new algorithm that could achieve linear speedup under parameter-free setting for decentralized optimization.
% \end{remark}

\subsection{Parameter-free convergence theory for D-NASA}

From the update of Algorithm \ref{algo_decen_normalized_averaged_grad_tracking} we have that:
\begin{equation}\label{eq_update_equations}
\begin{aligned}
    &\X^{t+1}= \X^{t} - \eta_t \hat{\Z}^t,\ \Bar{x}^{t+1}=\Bar{x}^{t} - \frac{1}{n}\sum_{i=1}^{n}\frac{\eta_t}{\|z_i^t\|}z_i^t\\
    &\Z^{t+1}= (1-\alpha_t)\Z^t W + \alpha_t \U^{t+1} W,\ \Bar{z}^{t+1}=(1-\alpha_t)\Bar{z}^t+\alpha_t \Bar{u}^{t+1}\\
    &\Bar{u}^{t+1}=\Bar{v}^{t+1}=\frac{1}{n}\sum_{i=1}^{n}\nabla F_i(x_i^{t},\xi_i^{t})
\end{aligned}
\end{equation}
where
$$
\hat{\Z}^t:=\bigg[\frac{z_1^t}{\|z_1^t\|},...,\frac{z_n^t}{\|z_n^t\|}\bigg]
$$
is the collections of column vectors where each column is normalized $z_i^t$.

The following descent lemma characterizes the difference between the function value of two consecutive iterates for Algorithm \ref{algo_decen_normalized_averaged_grad_tracking}: % is a lemma adopted from Lemma 12 in \cite{hubler2023parameter}:
\begin{lemma}\label{lemma_descent}
    Suppose Assumption \ref{assump_l_smooth} holds. Algorithm \ref{algo_decen_normalized_averaged_grad_tracking} satisfies:
    \begin{equation}
        f(\bar{x}^{t+1}) - f(\bar{x}^{t}) \leq -\eta_t\|\nabla f(\bar{x}^t)\| + 2\eta_t \|\Bar{z}^t - \nabla f(\bar{x}^t)\| + \frac{\eta_t}{n}\sum_{i=1}^{n}\|z_{i}^{t} - \Bar{z}^t\| + \frac{\eta_t^2 L}{2}
    \end{equation}
\end{lemma}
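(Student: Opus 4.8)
The plan is to combine the $L$-smoothness descent inequality for $f$ with a careful estimate of the inner product between $\nabla f(\bar{x}^t)$ and the averaged normalized direction. First I would invoke Assumption \ref{assump_l_smooth} at the points $\bar{x}^{t+1}$ and $\bar{x}^t$ to get
$$f(\bar{x}^{t+1}) - f(\bar{x}^t) \le \nabla f(\bar{x}^t)^\top(\bar{x}^{t+1}-\bar{x}^t) + \tfrac{L}{2}\|\bar{x}^{t+1}-\bar{x}^t\|^2,$$
and then use \eqref{eq_update_equations}, namely $\bar{x}^{t+1}-\bar{x}^t = -\tfrac{\eta_t}{n}\sum_{i=1}^n \tfrac{z_i^t}{\|z_i^t\|}$ (the mixing step preserves the average since $W$ is doubly stochastic). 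Since $\tfrac1n\sum_i \tfrac{z_i^t}{\|z_i^t\|}$ is an average of unit vectors, its norm is at most $1$, so the quadratic term is bounded by $\tfrac{\eta_t^2 L}{2}$, which already produces the last term of the claim.

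The core of the argument is to bound $-\tfrac{\eta_t}{n}\sum_i \nabla f(\bar{x}^t)^\top \tfrac{z_i^t}{\|z_i^t\|}$ by $-\eta_t\|\nabla f(\bar{x}^t)\|$ plus the two error terms. I would do this in per-node splits followed by an averaging step. For each $i$, write $\nabla f(\bar{x}^t) = \bar{z}^t - (\bar{z}^t-\nabla f(\bar{x}^t))$ to peel off the first error: $-\nabla f(\bar{x}^t)^\top\tfrac{z_i^t}{\|z_i^t\|} \le -(\bar{z}^t)^\top\tfrac{z_i^t}{\|z_i^t\|} + \|\bar{z}^t-\nabla f(\bar{x}^t)\|$. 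Then write $\bar{z}^t = z_i^t + (\bar{z}^t-z_i^t)$ to get $-(\bar{z}^t)^\top\tfrac{z_i^t}{\|z_i^t\|} = -\|z_i^t\| - (\bar{z}^t-z_i^t)^\top\tfrac{z_i^t}{\|z_i^t\|} \le -\|z_i^t\| + \|z_i^t-\bar{z}^t\|$. Now I would average over $i$ and --- this is the point that needs care --- aggregate the $-\|z_i^t\|$ terms first: by convexity of the norm, $\tfrac1n\sum_i\|z_i^t\| \ge \|\bar{z}^t\|$, so $-\tfrac1n\sum_i\|z_i^t\| \le -\|\bar{z}^t\| \le -\|\nabla f(\bar{x}^t)\| + \|\bar{z}^t-\nabla f(\bar{x}^t)\|$ by the triangle inequality. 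Collecting the two occurrences of $\|\bar{z}^t-\nabla f(\bar{x}^t)\|$ yields coefficient $2\eta_t$ and leaves the consensus term $\tfrac{\eta_t}{n}\sum_i\|z_i^t-\bar{z}^t\|$ with coefficient exactly one; adding the quadratic bound gives the stated inequality.

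I expect the only subtle step to be exactly this ordering: if one instead bounds each $-\|z_i^t\|$ by $-\|\bar{z}^t\| + \|z_i^t-\bar{z}^t\|$ individually, the consensus term comes out with coefficient $2\eta_t$ rather than $\eta_t$, so applying Jensen to $\sum_i\|z_i^t\|$ before converting to $\|\bar{z}^t\|$ is what keeps the constant tight. A minor caveat is that the normalization $z_i^t/\|z_i^t\|$ presumes $z_i^t\neq 0$; I would note that this holds along the algorithm's iterates (or is enforced by convention) but not belabor it. Everything else is the triangle inequality and Cauchy--Schwarz, so no further obstacles are anticipated.
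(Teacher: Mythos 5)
Your proof is correct and yields exactly the stated constants. The skeleton matches the paper's: apply $L$-smoothness at $\bar{x}^t$, note that the average of unit vectors has norm at most one to get the $\tfrac{\eta_t^2 L}{2}$ term, peel off $\nabla f(\bar{x}^t)-\bar{z}^t$ via Cauchy--Schwarz, and finish with the reverse triangle inequality $-\|\bar{z}^t\|\le -\|\nabla f(\bar{x}^t)\|+\|\bar{z}^t-\nabla f(\bar{x}^t)\|$, which is where the coefficient $2\eta_t$ comes from in both arguments. Where you genuinely diverge is the treatment of the remaining term $-(\bar{z}^t)^\top\bigl(\tfrac1n\sum_i z_i^t/\|z_i^t\|\bigr)$. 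The paper adds and subtracts $\bar{z}^t/\|\bar{z}^t\|$ and then bounds $\|\bar{z}^t\|\,\bigl\|\tfrac1n\sum_i z_i^t/\|z_i^t\|-\bar{z}^t/\|\bar{z}^t\|\bigr\|$ through the identity $\tfrac1n\sum_i z_i^t/\|z_i^t\|-\bar{z}^t/\|\bar{z}^t\|=\tfrac1n\sum_i\frac{\|\bar{z}^t\|-\|z_i^t\|}{\|\bar{z}^t\|\|z_i^t\|}z_i^t$, arriving at $\tfrac1n\sum_i\bigl|\|\bar{z}^t\|-\|z_i^t\|\bigr|\le\tfrac1n\sum_i\|z_i^t-\bar{z}^t\|$. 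You instead expand per node, $-(\bar{z}^t)^\top z_i^t/\|z_i^t\|=-\|z_i^t\|-(\bar{z}^t-z_i^t)^\top z_i^t/\|z_i^t\|$, and then apply Jensen to $\tfrac1n\sum_i\|z_i^t\|\ge\|\bar{z}^t\|$ before converting to $\|\nabla f(\bar{x}^t)\|$. Your observation about ordering is on point: the per-node reverse triangle inequality would double the consensus coefficient, and aggregating first is what keeps it at one (the paper's identity achieves the same constant by a different mechanism). Your route is slightly more elementary --- it avoids the normalized-average identity and never divides by $\|\bar{z}^t\|$, which the paper's proof does without comment even though $\bar{z}^t$ could in principle vanish while all $z_i^t$ do not. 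The $z_i^t\neq 0$ caveat you flag is shared by the paper (indeed $z_i^0=0$ at initialization), so it is not a gap specific to your argument.
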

\begin{proof}
    By the $L$-Lipschitz smooth of $f$ (Assumption \ref{assump_l_smooth}) we get:
    \begin{equation*}
    \begin{aligned}
        f(\Bar{x}^{t+1}) - f(\Bar{x}^t) \leq& \nabla f(\Bar{x}^t)^\top(\Bar{x}^{t+1} - \Bar{x}^t) + \frac{L}{2}\|\Bar{x}^{t+1} - \Bar{x}^t\|^2 \\
        =& -\eta_t \nabla f(\Bar{x}^t)^\top\bigg(\frac{1}{n}\sum_{i=1}^{n}\frac{z_i^t}{\|z_i^t\|}\bigg) + \frac{\eta_t^2 L}{2}\bigg\|\frac{1}{n}\sum_{i=1}^{n}\frac{z_i^t}{\|z_i^t\|}\bigg\|^2 \\
        \leq & -\eta_t \nabla f(\Bar{x}^t)^\top\bigg(\frac{1}{n}\sum_{i=1}^{n}\frac{z_i^t}{\|z_i^t\|}\bigg) + \frac{\eta_t^2 L}{2} \\
        = & -\eta_t (\nabla f(\Bar{x}^t) - \Bar{z}^t)^\top\bigg(\frac{1}{n}\sum_{i=1}^{n}\frac{z_i^t}{\|z_i^t\|}\bigg) - \eta_t(\Bar{z}^t)^\top\bigg(\frac{1}{n}\sum_{i=1}^{n}\frac{z_i^t}{\|z_i^t\|}\bigg) + \frac{\eta_t^2 L}{2} \\
        = & -\eta_t (\nabla f(\Bar{x}^t) - \Bar{z}^t)^\top\bigg(\frac{1}{n}\sum_{i=1}^{n}\frac{z_i^t}{\|z_i^t\|}\bigg) - \eta_t(\Bar{z}^t)^\top\bigg(\frac{1}{n}\sum_{i=1}^{n}\frac{z_i^t}{\|z_i^t\|} -\frac{\Bar{z}^t}{\|\Bar{z}^t\|}\bigg) - \eta_t\|\Bar{z}^t\| + \frac{\eta_t^2 L}{2} \\
        \leq & 2\eta_t \|\nabla f(\Bar{x}^t) - \Bar{z}^t\| - \eta_t \|\nabla f(\Bar{x}^t)\| + \eta_t \|\Bar{z}^t\|\bigg\|\frac{1}{n}\sum_{i=1}^{n}\frac{z_i^t}{\|z_i^t\|} -\frac{\Bar{z}^t}{\|\Bar{z}^t\|}\bigg\| + \frac{\eta_t^2 L}{2}
    \end{aligned}
    \end{equation*}
    where the second and third inequalities are by Cauchy-Schwarz inequality. It remains to bound the second last term in the last line. We have
    \begin{equation*}
    \begin{aligned}
        &\|\Bar{z}^t\|\bigg\|\frac{1}{n}\sum_{i=1}^{n}\frac{z_i^t}{\|z_i^t\|} -\frac{\Bar{z}^t}{\|\Bar{z}^t\|}\bigg\| = \frac{\|\Bar{z}^t\|}{n} \bigg\|\sum_{i=1}^{n}\frac{\|\Bar{z}^t\| - \|z_i^t\|}{\|\Bar{z}^t\|\|z_i^t\|}z_i^t\bigg\| \\
        \leq & \frac{\|\Bar{z}^t\|}{n}\sum_{i=1}^{n}\frac{|\|\Bar{z}^t\| - \|z_i^t\||}{\|\Bar{z}^t\|\|z_i^t\|}\|z_i^t\| = \frac{1}{n}\sum_{i=1}^{n} |\|\Bar{z}^t\| - \|z_i^t\||\leq \frac{1}{n}\sum_{i=1}^{n}\|z_i^t - \Bar{z}^t\|
    \end{aligned}
    \end{equation*}
    which concludes the proof.
\end{proof}

We have the following dual convergence. % , adopted from Lemma 3.8 in \cite{pmlr_v216_xiao23a}.
\begin{lemma}\label{lemma_consensus}
    We have
    \begin{equation}
        \Bar{z}^{t+1} - \nabla f(\Bar{x}^{t+1}) = (1-\alpha_t) (\Bar{z}^{t} - \nabla f(\Bar{x}^{t})) + \alpha_t(\delta_1^t + \delta_2^t + \delta_3^t)
    \end{equation}
    where
    \begin{equation*}
    \begin{aligned}
        \delta_1^t &= \frac{\nabla f(\Bar{x}^t) - \nabla f(\Bar{x}^{t+1})}{\alpha_t}, \\
        \delta_2^t &= \frac{1}{n}\sum_{i=1}^{n}\nabla f_i(x_i^t) - \nabla f(\Bar{x}^t), \\
        \delta_3^t &= \frac{1}{n}\sum_{i=1}^{n}\left( v_i^t - \nabla f_i(x_i^t) \right).
    \end{aligned}
    \end{equation*}
    
    Consequently, we get:
    \begin{equation}
        \E\|\Bar{z}^{t} - \nabla f(\Bar{x}^{t})\| \leq L\sum_{\tau=1}^{t}\beta_{(\tau+1):t}\eta_\tau + L\sum_{\tau=0}^{t}\beta_{(\tau+1):t}\alpha_{\tau}\sqrt{\frac{1}{n}\sum_{i=1}^{n}\E\|x_i^\tau - \Bar{x}^\tau\|^2} + \sigma\sqrt{\frac{1}{n}\sum_{\tau=0}^{t}\beta_{(\tau+1):t}^2\alpha_{\tau}^2}
    \end{equation}
    where we have the following conventions:
    $$\beta_t:=1-\alpha_t \ \mbox{ and } \ \beta_{a:b}:=\prod_{i=a}^{b}\beta_i$$
\end{lemma}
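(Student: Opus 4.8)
First I would establish the one-step recursion. From the update equations \eqref{eq_update_equations}, $\Bar{z}^{t+1}=(1-\alpha_t)\Bar{z}^t+\alpha_t\Bar{u}^{t+1}$ and $\Bar{u}^{t+1}=\Bar{v}^{t+1}=\frac1n\sum_i v_i^t$ where I write $v_i^t:=\nabla F_i(x_i^t,\xi_i^t)$ (using the indexing of the algorithm, $v_i^{t+1}\leftarrow \nabla F_i(x_i^t,\xi_i^t)$). The goal is to compare $\Bar{z}^{t+1}$ to $\nabla f(\Bar{x}^{t+1})$, so I subtract and add $\nabla f(\Bar{x}^t)$ and $\nabla f(\Bar{x}^{t+1})$ strategically. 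Starting from
\[
\Bar{z}^{t+1}-\nabla f(\Bar{x}^{t+1})=(1-\alpha_t)\Bar{z}^t+\alpha_t\Bar{u}^{t+1}-\nabla f(\Bar{x}^{t+1}),
\]
I write $-\nabla f(\Bar{x}^{t+1})=-(1-\alpha_t)\nabla f(\Bar{x}^t)-\alpha_t\nabla f(\Bar{x}^t)+(\nabla f(\Bar{x}^t)-\nabla f(\Bar{x}^{t+1}))$, so that the $(1-\alpha_t)$ terms collapse into $(1-\alpha_t)(\Bar{z}^t-\nabla f(\Bar{x}^t))$ and the remainder is $\alpha_t(\Bar{u}^{t+1}-\nabla f(\Bar{x}^t))+(\nabla f(\Bar{x}^t)-\nabla f(\Bar{x}^{t+1}))$. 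Factoring out $\alpha_t$ from the last piece gives $\alpha_t\,\delta_1^t$; splitting $\Bar{u}^{t+1}-\nabla f(\Bar{x}^t)=\frac1n\sum_i v_i^t-\nabla f(\Bar{x}^t)$ via the intermediate quantity $\frac1n\sum_i\nabla f_i(x_i^t)$ produces exactly $\alpha_t(\delta_2^t+\delta_3^t)$. This is the stated identity.

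Next I would unroll the recursion. With $\beta_\tau:=1-\alpha_\tau$ and $\beta_{a:b}:=\prod_{i=a}^b\beta_i$ (and the convention $\beta_{(t+1):t}=1$), telescoping the recursion from the base case and using $x_i^0=z_i^0=v_i^0=0$ (so that the $\tau=0$ contribution from $\delta_1$ vanishes because $\alpha_0=0$, and the initial gap term is handled by the smoothness/zero-initialization) yields
\[
\Bar{z}^t-\nabla f(\Bar{x}^t)=\sum_{\tau=0}^{t}\beta_{(\tau+1):t}\,\alpha_\tau(\delta_1^\tau+\delta_2^\tau+\delta_3^\tau),
\]
(with the $\tau=0$ term for $\delta_1$ understood as $\nabla f(\Bar{x}^0)-\nabla f(\Bar{x}^1)$ if $\alpha_0\neq0$, but here $\alpha_0=0$). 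Then I bound each family of terms separately. For $\delta_1^\tau$: $\|\alpha_\tau\delta_1^\tau\|=\|\nabla f(\Bar{x}^\tau)-\nabla f(\Bar{x}^{\tau+1})\|\le L\|\Bar{x}^{\tau+1}-\Bar{x}^\tau\|\le L\eta_\tau$ using $L$-smoothness (Assumption 3.1) and the fact that $\Bar{x}^{\tau+1}-\Bar{x}^\tau=-\frac{\eta_\tau}{n}\sum_i z_i^\tau/\|z_i^\tau\|$ has norm at most $\eta_\tau$. This gives the first sum $L\sum_{\tau=1}^t\beta_{(\tau+1):t}\eta_\tau$. For $\delta_2^\tau=\frac1n\sum_i(\nabla f_i(x_i^\tau)-\nabla f_i(\Bar{x}^\tau))$ (using $\frac1n\sum_i\nabla f_i(\Bar{x}^\tau)=\nabla f(\Bar{x}^\tau)$): by Jensen/Cauchy–Schwarz and $L_i$-smoothness, $\|\delta_2^\tau\|\le\frac1n\sum_i L_i\|x_i^\tau-\Bar{x}^\tau\|\le L\sqrt{\frac1n\sum_i\|x_i^\tau-\Bar{x}^\tau\|^2}$ — here I should be slightly careful: to land exactly the stated RHS with the single constant $L=\frac1n\sum_i L_i$ I take $\|\delta_2^\tau\|\le\frac1n\sum_i L_i\|x_i^\tau-\Bar x^\tau\|\le(\max_i L_i)\cdot\frac1n\sum_i\|x_i^\tau-\Bar x^\tau\|$; more likely the paper intends $L$ loosely or uses $\frac1n\sum_i L_i\|x_i-\bar x\|\le L\sqrt{\frac1n\sum\|x_i-\bar x\|^2}$ after bounding $L_i\le nL$ or simply writing $L$ for the relevant Lipschitz bound — I would follow their convention. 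Taking expectations and applying the triangle inequality over $\tau$ yields the middle sum $L\sum_{\tau=0}^t\beta_{(\tau+1):t}\alpha_\tau\sqrt{\frac1n\sum_i\E\|x_i^\tau-\Bar x^\tau\|^2}$.

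Finally, for the stochastic term $\delta_3^\tau=\frac1n\sum_i(v_i^\tau-\nabla f_i(x_i^\tau))$: conditionally on $\mathcal F_{\tau-1}$ these have mean zero (Assumption 3.2), and — crucially — the whole sum $\sum_\tau\beta_{(\tau+1):t}\alpha_\tau\delta_3^\tau$ is a martingale-difference sum across $\tau$ with independent-across-$i$ terms. So $\E\|\sum_\tau\beta_{(\tau+1):t}\alpha_\tau\delta_3^\tau\|^2=\sum_\tau\beta_{(\tau+1):t}^2\alpha_\tau^2\E\|\delta_3^\tau\|^2$, and $\E\|\delta_3^\tau\|^2=\frac1{n^2}\sum_i\E\|v_i^\tau-\nabla f_i(x_i^\tau)\|^2\le\sigma^2/n$; then Jensen ($\E\|\cdot\|\le\sqrt{\E\|\cdot\|^2}$) gives the last term $\sigma\sqrt{\frac1n\sum_\tau\beta_{(\tau+1):t}^2\alpha_\tau^2}$. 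Assembling the three bounds via the triangle inequality on $\E\|\Bar z^t-\nabla f(\Bar x^t)\|$ completes the proof. The main obstacle — and the only genuinely non-mechanical step — is the treatment of the stochastic term: one must recognize that $\{\delta_3^\tau\}$ forms a martingale-difference sequence (so cross terms vanish under the tower property, using that $\xi_i^{\tau}$ are independent given $\mathcal F_{\tau-1}$), keep the variance $L^2$-type bound and the variance $\sigma^2$-type bound in separate norms rather than squaring the whole right-hand side, and correctly handle the zero-initialization and the convention $\alpha_0=0$ so the lower summation limits ($\tau=1$ for $\delta_1$, $\tau=0$ for $\delta_2,\delta_3$) come out exactly as stated.
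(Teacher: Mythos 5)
Your proposal is correct and follows essentially the same route as the paper: the same one-step recursion obtained by adding and subtracting $(1-\alpha_t)\nabla f(\Bar{x}^t)$, the same unrolling with the $\beta_{(\tau+1):t}$ weights, the deterministic bounds $\alpha_\tau\|\delta_1^\tau\|\le L\eta_\tau$ and $\|\delta_2^\tau\|\le$ (smoothness)$\times$(consensus error), and the martingale-difference/Jensen argument for $\delta_3$ that keeps the variance term inside a square root rather than squaring the whole bound. The constant subtlety you flag in the $\delta_2$ step (whether $\frac{1}{n}\sum_i L_i\|x_i^\tau-\Bar{x}^\tau\|$ is really dominated by $L\sqrt{\frac{1}{n}\sum_i\|x_i^\tau-\Bar{x}^\tau\|^2}$ with $L=\frac{1}{n}\sum_i L_i$) is glossed over in the paper as well, so it is not a gap relative to the paper's own argument.
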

\begin{proof}
    By the update we know that $\Bar{u}^t=\Bar{v}^{t}=\frac{1}{n}\sum_{i=1}^{n} v_i^t$, thus
    \begin{equation*}
    \begin{aligned}
        \Bar{z}^{t+1} - \nabla f(\Bar{x}^{t+1}) &= (1-\alpha_t)\Bar{z}^{t} + \alpha_t\Bar{u}^t - \nabla f(\Bar{x}^{t+1})\\
        &= (1-\alpha_t) (\Bar{z}^{t} - \nabla f(\Bar{x}^{t})) + \alpha_t(\delta_1^t + \delta_2^t + \delta_3^t)
    \end{aligned}
    \end{equation*}

    Now repeat the above recursive relation we get
    \begin{equation*}
    \begin{aligned}
        \Bar{z}^{t} - \nabla f(\Bar{x}^{t}) &= (1-\alpha_{t-1}) (\Bar{z}^{t-1} - \nabla f(\Bar{x}^{t-1})) + + \alpha_{t-1}(\delta_1^{t-1} + \delta_2^{t-1} + \delta_3^{t-1}) \\
        &= \cdots \\ 
        &= \beta_{1:t}(\Bar{z}^{0} - \nabla f(\Bar{x}^{0})) + \sum_{\tau=1}^{t}\beta_{(\tau+1):t}\alpha_{\tau}(\delta_1^{\tau}+\delta_2^{\tau}+\delta_3^{\tau}) \\
        &= \sum_{\tau=1}^{t}\beta_{(\tau+1):t}\alpha_{\tau}\delta_1^{\tau} + \sum_{\tau=1}^{t}\beta_{(\tau+1):t}\alpha_{\tau}\delta_2^{\tau} + \sum_{\tau=0}^{t}\beta_{(\tau+1):t}\alpha_{\tau}\delta_3^{\tau} . 
    \end{aligned}
    \end{equation*}

    Therefore we get
    \begin{equation*}
    \begin{aligned}
        \E\|\Bar{z}^{t} - \nabla f(\Bar{x}^{t})\| &\leq \sum_{\tau=1}^{t}\beta_{(\tau+1):t}\alpha_{\tau}\E\|\delta_1^{\tau}\| + \sum_{\tau=1}^{t}\beta_{(\tau+1):t}\alpha_{\tau}\E\|\delta_2^{\tau}\| + \E\|\sum_{\tau=0}^{t}\beta_{(\tau+1):t}\alpha_{\tau}\delta_3^{\tau}\| \\
        &\leq L\sum_{\tau=1}^{t}\beta_{(\tau+1):t}\eta_\tau + L\sum_{\tau=0}^{t}\beta_{(\tau+1):t}\alpha_{\tau}\frac{1}{n}\sum_{i=1}^{n}\E\|x_i^\tau - \Bar{x}^\tau\| + \sqrt{\sum_{\tau=0}^{t}\beta_{(\tau+1):t}^2\alpha_{\tau}^2\E\|\delta_3^{\tau}\|^2}\\
        &\leq L\sum_{\tau=1}^{t}\beta_{(\tau+1):t}\eta_\tau + L\sum_{\tau=0}^{t}\beta_{(\tau+1):t}\alpha_{\tau}\sqrt{\frac{1}{n}\sum_{i=1}^{n}\E\|x_i^\tau - \Bar{x}^\tau\|^2} + \sigma\sqrt{\frac{1}{n}\sum_{\tau=0}^{t}\beta_{(\tau+1):t}^2\alpha_{\tau}^2}
    \end{aligned}
    \end{equation*}
    where the second term is due to smoothness of each $f_i$, also the last term is by $\E[\langle\delta_3^{\tau_1},\delta_3^{\tau_2}\rangle]=0$ for any $\tau_1\not=\tau_2$ (due to unbiased assumption) and
    \begin{equation*}
    \begin{aligned}
\sqrt{\sum_{\tau=0}^{t}\beta_{(\tau+1):t}^2\alpha_{\tau}^2\E\|\delta_3^{\tau}\|^2} &= \sqrt{\sum_{\tau=0}^{t}\beta_{(\tau+1):t}^2\alpha_{\tau}^2\frac{1}{n^2}\sum_{i=1}^{n}\E\left\| \nabla F_i(x_i^t,\xi_{i}^{t}) - \nabla f_i(x_i^t) \right\|^2}\\
&\leq \sqrt{\sum_{\tau=0}^{t}\beta_{(\tau+1):t}^2\alpha_{\tau}^2\frac{\sigma^2}{n}}
    \end{aligned}
    \end{equation*}
    since all the cross inner-product terms vanish due to unbiased assumption.
\end{proof}

We have the following lemma about the consensus error, that is, the average distance of each node to the global average.
\begin{lemma}\label{lemma_one_step_consensus}
    For the update of Algorithm \ref{algo_decen_normalized_averaged_grad_tracking}, we have:
    \begin{equation*}
    \begin{aligned}
         \|\X^{t+1} - \Bar{\X}^{t+1}\|^2&\leq \frac{1+\rho}{2}\|\X^{t} - \Bar{\X}^{t}\|^2 + \eta_t^2 \frac{1+\rho^2}{1-\rho^2}\|\hat{\Z}^{t} - \Bar{\hat{\Z}}^t\|^2,\\
         \|\hat{\Z}^{t} - \Bar{\hat{\Z}}^t\|^2&\leq n, \\
         \|\Z^{t+1} - \Bar{\Z}^{t+1}\|^2 &\leq \frac{1+\rho}{2} \|\Z^{t} - \Bar{\Z}^{t}\|^2 + \alpha_t^2\frac{1+\rho^2}{1-\rho^2}  \|\U^t - \Bar{\U}^t\|^2,\\
         \|\U^{t+1} - \Bar{\U}^{t+1}\|^2&\leq \frac{1+\rho}{2} \|\U^{t} - \Bar{\U}^{t}\|^2 + \frac{1+\rho^2}{1-\rho^2}  \|\V^{t+1} - \V^t\|^2 \\
         \E\|\V^{t+1} - \V^t\|^2&\leq 10 n \sigma^2+5n L^2 +5L^2 \E\left[\left\|\X^{t+1}-\bar{\X}^{t+1}\right\|^2+\left\|\X^{t}-\bar{\X}^{t}\right\|^2\right]
    \end{aligned}
    \end{equation*}
\end{lemma}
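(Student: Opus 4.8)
The plan is to prove the five inequalities in turn; the first, third, and fourth are exactly the one-step consensus estimate already used for D-SGT (Lemma~\ref{lemma_one_step_consensus_dsgt}), which I would reuse, spelling it out once as follows. For the first, start from the matrix form of the D-NASA update in \eqref{eq_update_equations}, $\X^{t+1}=(\X^{t}-\eta_t\hat{\Z}^t)W$ and $\bar{\X}^{t+1}=\frac1n(\X^{t}-\eta_t\hat{\Z}^t)\ones\ones^{\top}$ (using $W\ones=\ones$); subtracting and using $W\frac{\ones\ones^{\top}}{n}=\frac{\ones\ones^{\top}}{n}$ gives $\X^{t+1}-\bar{\X}^{t+1}=\big((\X^{t}-\bar{\X}^{t})-\eta_t(\hat{\Z}^t-\bar{\hat{\Z}}^t)\big)\big(W-\frac{\ones\ones^{\top}}{n}\big)$. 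Since $\|W-\frac{\ones\ones^{\top}}{n}\|_2=\rho$ this contributes a factor $\rho^2$, and Young's inequality $\|a+b\|^2\le(1+\frac1c)\|a\|^2+(1+c)\|b\|^2$ with $c=\frac{2\rho^2}{1-\rho^2}$ yields the coefficients $\rho^2(1+\frac1c)=\frac{1+\rho^2}{2}\le\frac{1+\rho}{2}$ and $\rho^2(1+c)\eta_t^2\le\frac{1+\rho^2}{1-\rho^2}\eta_t^2$, which is exactly the claim. The third and fourth bounds apply the same manipulation to the tracker updates $\U^{t+1}=(\U^{t}+\V^{t+1}-\V^{t})W$ and $\Z^{t+1}=\big((1-\alpha_t)\Z^{t}+\alpha_t\tilde{\U}^{t+1}\big)W$: the moving-average factor $(1-\alpha_t)^2\le1$ is absorbed into $\frac{1+\rho}{2}$, the factor $\alpha_t^2$ lands in front of the perturbation term, and one repeatedly uses that $W$ preserves column averages (so $\bar{z}^{t+1}=(1-\alpha_t)\bar{z}^{t}+\alpha_t\bar{u}^{t+1}$, and similarly for $\U$).

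The second inequality is the structural observation that drives the whole analysis and has a one-line proof: each column of $\hat{\Z}^t$ is a unit vector (with the convention $0/\|0\|=0$ should a local tracker ever vanish), so since the Euclidean mean minimizes the sum of squared deviations,
\[
\|\hat{\Z}^{t}-\bar{\hat{\Z}}^{t}\|^2=\sum_{i=1}^{n}\Big\|\tfrac{z_i^t}{\|z_i^t\|}-\tfrac1n\textstyle\sum_{j=1}^{n}\tfrac{z_j^t}{\|z_j^t\|}\Big\|^2\le\sum_{i=1}^{n}\Big\|\tfrac{z_i^t}{\|z_i^t\|}\Big\|^2=n.
\]

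The fifth inequality carries the real work. Writing $\nabla\mathbf{F}(\X):=[\nabla f_1(x_1),\dots,\nabla f_n(x_n)]$ and $\nabla\mathbf{F}(\bar x):=[\nabla f_1(\bar x),\dots,\nabla f_n(\bar x)]$, and recalling from Assumption~\ref{assump_bdd_variance} together with $v_i^{t+1}=\nabla F_i(x_i^{t},\xi_i^{t})$ that $\E[\V^{t+1}\mid\mathcal{F}_{t-1}]=\nabla\mathbf{F}(\X^{t})$ and $\E[\V^{t}\mid\mathcal{F}_{t-2}]=\nabla\mathbf{F}(\X^{t-1})$, I would use the five-term splitting (telescoping through these conditional means and through $\nabla\mathbf{F}(\bar x^{\cdot})$)
\[
\V^{t+1}-\V^{t}=\big(\V^{t+1}-\nabla\mathbf{F}(\X^{t})\big)-\big(\V^{t}-\nabla\mathbf{F}(\X^{t-1})\big)+\big(\nabla\mathbf{F}(\X^{t})-\nabla\mathbf{F}(\bar x^{t})\big)+\big(\nabla\mathbf{F}(\bar x^{t})-\nabla\mathbf{F}(\bar x^{t-1})\big)+\big(\nabla\mathbf{F}(\bar x^{t-1})-\nabla\mathbf{F}(\X^{t-1})\big),
\]
apply $\|\sum_{k=1}^{5}a_k\|^2\le5\sum_{k=1}^{5}\|a_k\|^2$, and take expectations. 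The two stochastic terms each contribute $5n\sigma^2$ by the bounded-variance assumption, giving $10n\sigma^2$; the two local-versus-average terms are bounded by $5L^2\E\|\X^{t}-\bar{\X}^{t}\|^2$ and $5L^2\E\|\X^{t-1}-\bar{\X}^{t-1}\|^2$ via $L_i$-smoothness of each $f_i$; and the drift term is controlled by $\|\bar x^{t}-\bar x^{t-1}\|\le\eta_{t-1}=\mathcal{O}(1)$ (true for the stepsizes of Theorems~\ref{thm_fix_step}--\ref{thm_dim_step}), giving $\mathcal{O}(nL^2)$. Combining these gives a bound of the stated form (with the consensus errors at the two iterates feeding $\V^{t+1}$ and $\V^{t}$, i.e. $\X^{t}$ and $\X^{t-1}$).

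The one genuine subtlety, and the main obstacle, is the filtration bookkeeping in this last step: the cross term between the two martingale-difference increments must vanish in expectation, which is exactly where the conditional-independence clause of Assumption~\ref{assump_bdd_variance} is used, and one must track carefully that---because $v_i^{t+1}$ is evaluated at $x_i^{t}$ rather than at $x_i^{t+1}$---the relevant $\sigma$-algebras are $\mathcal{F}_{t-1}$ and $\mathcal{F}_{t-2}$, not $\mathcal{F}_{t}$ and $\mathcal{F}_{t-1}$ as in the D-SGT computation. All the remaining estimates are routine.
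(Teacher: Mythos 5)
Your proposal is correct and follows essentially the same route as the paper's proof: the same $W-\tfrac{\ones\ones^\top}{n}$ contraction with Young's inequality ($c=\tfrac{2\rho^2}{1-\rho^2}$) for the first, third and fourth bounds, the same variance-decomposition argument for $\|\hat{\Z}^t-\bar{\hat{\Z}}^t\|^2\le n$, and the same five-term splitting of $\V^{t+1}-\V^t$ with the drift term killed by $\|\bar{x}^{t+1}-\bar{x}^t\|\le\eta_t\|\bar{\hat{z}}^t\|\le\eta_t$. Two small remarks: your index shift (consensus errors at $\X^t,\X^{t-1}$ rather than $\X^{t+1},\X^t$, since $v_i^{t+1}$ is evaluated at $x_i^t$) is actually more careful than the paper's bookkeeping and is harmless downstream, and the cross-term cancellation you flag as the main obstacle is not needed here, since the crude $\|\sum_{k=1}^5 a_k\|^2\le 5\sum_k\|a_k\|^2$ already suffices.
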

\begin{proof}% [Proof of consensus error bound]
    Since 
    \begin{equation*}
    \begin{aligned}
        &\|\X^{t+1} - \Bar{\X}^{t+1}\|^2= \| (\X^{t}-\eta_t\hat{\Z}^t) W - (\Bar{x}^t-\eta_t\Bar{\hat{z}}^t)\ones^\top \|^2 \\
        =& \| (\X^{t}-\eta_t\hat{\Z}^t) W - \frac{1}{n}(\X^{t}-\eta_t\hat{\Z}^t)\ones\ones^\top \|^2 = \| (\A^t- \A^t\frac{\ones^\top}{n})(W - \frac{\ones\ones^\top}{n})\|^2 \\
        \leq & \| \A^t- \A^t\frac{\ones^\top}{n}\|^2\|W - \frac{\ones\ones^\top}{n}\|_2^2 \\
        \leq &\rho^2 \| \A^t- \A^t\frac{\ones^\top}{n}\|^2 = \rho^2 \| (\X^{t} - \Bar{x}^t\ones^\top) -\eta_t (\hat{\Z}^t - \Bar{\hat{z}}^t\ones^\top) \|^2 \\
        \leq & \rho^2(1+\frac{1}{c})\| \X^{t} - \Bar{x}^t\ones^\top\|^2 + \rho^2\eta_t^2(1+c)\|\hat{\Z}^t - \Bar{\hat{z}}^t\ones^\top\|^2
    \end{aligned}
    \end{equation*}
    where $\A^t:=\X^{t}-\eta_t\hat{\Z}^t$. Taking $c=\frac{2\rho^2}{1-\rho^2}\geq 0$ gives the desired result. For the consensus error of $\Z^t$ and $\U^t$ we get it in similar ways. It remains to bound the consensus error of $\hat{\Z}^t$ and the term $\V^{t+1}-\V^t$. For the consensus error of $\hat{\Z}^t$, we have
    \begin{equation*}
        \|\hat{\Z}^{t} - \Bar{\hat{\Z}}^t\|^2 =\sum_{i=1}^{n}\| \frac{z_i^t}{\|z_i^t\|} - \frac{1}{n}\sum_{i=1}^{n}\frac{z_i^t}{\|z_i^t\|} \|^2
        \leq \sum_{i=1}^{n}\| \frac{z_i^t}{\|z_i^t\|}\|^2 \leq n
    \end{equation*}
    where we use
    $$
    \frac{1}{n}\sum_{i=1}^{n}\| v^i - \frac{1}{n}\sum_{i=1}^{n}v^i \|^2 =\frac{1}{n}\sum_{i=1}^{n}\| v^i\|^2 - \|\frac{1}{n}\sum_{i=1}^{n} v^i\|^2\leq \frac{1}{n}\sum_{i=1}^{n}\| v^i\|^2
    $$
    for any sequence of vectors $v^1,...,v^n$.

    Now we inspect the term $\V^{t+1}-\V^t$ similar to the proof of Lemma \ref{lemma_concensus_summation_dsgt}. We again have
    \begin{equation*}
    \begin{aligned}
    \V^{t+1}-\V^t= & \V^{t+1}-\E[\V^{t+1} \mid \mathscr{F}_t]-(\V^t-\E[\V^t \mid \mathscr{F}^{t-1}]) \\
    & +\E[\V^{t+1} \mid \mathscr{F}_t]-\nabla \mathbf{F}(\bar{x}^{t+1})+\nabla \mathbf{F}(\bar{x}^{t+1})-\nabla \mathbf{F}(\bar{x}^{t})+\nabla \mathbf{F}(\bar{x}^{t})-\E[\V^t \mid \mathscr{F}_{t-1}]
    \end{aligned}
    \end{equation*}
    where we use the notation $\nabla \mathbf{F}(x):=[\nabla f_1(x),...,\nabla f_n(x)]$ being the matrix of column gradient vectors. We thus have
    \begin{equation*}
    \begin{aligned}
    & \E\left\|\V^{t+1}-\V^t\right\|^2 \\
    \leq& 5\left\{\E\left\|\V^{t+1}-\E\left[\V^{t+1} \mid \mathscr{F}_t\right]\right\|^2+\E\left\|\V^t-\E\left[\V^t \mid \mathscr{F}_{t-1}\right]\right\|^2+\sum_{i=1}^n\E\left\|\nabla f_i(x_i^{t+1})-\nabla f_i(\bar{x}^{t+1})\right\|^2\right. \\
    &\left.+\sum_{i=1}^n\E\left\|\nabla f_i(\bar{x}^{t+1})-\nabla f_i(\bar{x}^{t})\right\|^2+\sum_{i=1}^n\E\left\|\nabla f_i(x_i^{t})-\nabla f_i(\bar{x}^{t})\right\|^2\right\} \\
    \leq& 5\left(2 n \sigma^2+n L^2 +L^2 \E\left[\left\|\X^{t+1}-\bar{\X}^{t+1}\right\|^2+\left\|\X^{t}-\bar{\X}^{t}\right\|^2\right]\right)
    \end{aligned}
    \end{equation*}
    where the first inequality uses Cauchy-Schwarz inequality, and the second utilizes Lipschitz smoothness of each $f_i$ also note that $\|\nabla f_i(\bar{x}^{t+1})-\nabla f_i(\bar{x}^{t})\|\leq L\|\Bar{\hat{z}}^t\|\leq L$.
\end{proof}

Now we are ready to analyze the cumulative consensus error for Algorithm \ref{algo_decen_normalized_averaged_grad_tracking} as follows:
\begin{lemma}\label{lemma_concensus_summation}
    For the update of Algorithm \ref{algo_decen_normalized_averaged_grad_tracking}, if decreasing sequences such that $0\leq\alpha_{t+1}\leq\alpha_{t}\leq 1$ and $0\leq\eta_{t+1}\leq\eta_{t}\leq 1$, we have:
    \begin{equation*}
    \begin{aligned}  
    \sum_{t=0}^{T-1}\frac{1}{n}\E\|\X^{t} - \Bar{\X}^{t}\|^2&\leq \Tilde{\rho}\sum_{t=0}^{T-1}\eta_t^2, \\
    \sum_{\tau=0}^{t}\alpha_{\tau}\sqrt{\frac{1}{n}\E\|\X^{\tau} - \Bar{\X}^{\tau}\|^2}  &\leq \Tilde{\rho}\sum_{\tau=0}^{t}\alpha_{\tau}\eta_{\tau}, \\
    \sum_{t=0}^{T-1}\frac{1}{n}\E\|\Z^{t} - \Bar{\Z}^{t}\|^2&\leq \Tilde{\rho}^2(10 \sigma^2+ 5L^2) \sum_{t=0}^{T-1}\alpha_t^2 + 2\Tilde{\rho}^3 \sum_{t=0}^{T} \alpha_t^2 \eta_t^2, \\
    \sum_{\tau=0}^{t}\eta_{\tau}\sqrt{\frac{1}{n}\E\|\Z^{\tau} - \Bar{\Z}^{\tau}\|^2}  &\leq \Tilde{\rho}^2\sqrt{10\sigma^2+5L^2}\sum_{\tau=0}^{t}\eta_\tau\alpha_\tau + 2\sqrt{5}L\tilde{\rho}^3\sum_{\tau=0}^{t+1}\eta_\tau^2\alpha_\tau.
    \end{aligned}
    \end{equation*}
    where
    $$
    \Tilde{\rho}:= \max\bigg\{\frac{1}{1-\sqrt{\frac{1+\rho}{2}}} \sqrt{\frac{1+\rho^2}{1-\rho^2}}, \frac{\rho}{1-\rho}\frac{1+\rho^2}{1-\rho^2}\bigg\}
    $$
    Note that $\Tilde{\rho}$ is greater than $0$.
\end{lemma}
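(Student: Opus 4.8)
The plan is to obtain each of the four estimates by feeding a one-step contraction from Lemma~\ref{lemma_one_step_consensus} into the geometric-summation Lemma~\ref{lemma_technical_summation}, exploiting the zero initialization ($\X^0=\Z^0=\U^0=\V^0=0$) so that every boundary term $c_0a_0$ produced by Lemma~\ref{lemma_technical_summation} vanishes. The two ``squared'' estimates (the first and the third) are handled by applying Lemma~\ref{lemma_technical_summation} directly to the squared consensus errors, whereas the two ``square-root'' estimates (the second and the fourth) are handled by first taking square roots in the one-step recursion and using $\sqrt{a+b}\le\sqrt{a}+\sqrt{b}$ together with $\E\|\cdot\|\le\sqrt{\E\|\cdot\|^2}$.

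For the first estimate, set $w_t:=\tfrac1n\E\|\X^t-\Bar{\X}^t\|^2$. Re-running the computation inside the proof of Lemma~\ref{lemma_one_step_consensus} with the Young-inequality constant chosen so that the coefficient of $\|\X^t-\Bar{\X}^t\|^2$ is $\tfrac{1+\rho^2}{2}$ rather than the looser $\tfrac{1+\rho}{2}$ stated there, and using $\tfrac1n\|\hat{\Z}^t-\Bar{\hat{\Z}}^t\|^2\le 1$, gives $w_{t+1}\le\tfrac{1+\rho^2}{2}w_t+\rho^2\tfrac{1+\rho^2}{1-\rho^2}\eta_t^2$; Lemma~\ref{lemma_technical_summation} with $c_k\equiv1$ then yields $\sum_t w_t\le\tfrac{2\rho^2}{1-\rho^2}\cdot\tfrac{1+\rho^2}{1-\rho^2}\sum_t\eta_t^2$, and since $\tfrac{2\rho^2}{1-\rho^2}\le\tfrac{\rho}{1-\rho}$ for $\rho\in(0,1)$ this is at most $\Tilde{\rho}\sum_t\eta_t^2$ via the \emph{second} term in the definition of $\Tilde{\rho}$. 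For the second estimate I instead take square roots in the (as-stated, looser) recursion $w_{t+1}\le\tfrac{1+\rho}{2}w_t+\tfrac{1+\rho^2}{1-\rho^2}\eta_t^2$, obtaining $\sqrt{w_{t+1}}\le\sqrt{\tfrac{1+\rho}{2}}\sqrt{w_t}+\sqrt{\tfrac{1+\rho^2}{1-\rho^2}}\,\eta_t$, and apply Lemma~\ref{lemma_technical_summation} with $a_k=\sqrt{w_k}$, $r=\sqrt{(1+\rho)/2}$ and the nonincreasing weights $c_k=\alpha_k\in[0,1]$; this produces exactly the \emph{first} term of $\Tilde{\rho}$ as the constant, which is why $\Tilde{\rho}$ is a maximum of two quantities.

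The third and fourth estimates follow by the same mechanism applied along the chain $\Z\rightsquigarrow\U\rightsquigarrow(\V^{t+1}-\V^t)\rightsquigarrow\X$. Using the $\U$-recursion of Lemma~\ref{lemma_one_step_consensus} together with the bound $\tfrac1n\E\|\V^{t+1}-\V^t\|^2\le 10\sigma^2+5L^2+5L^2\cdot\tfrac1n\E[\|\X^{t+1}-\Bar{\X}^{t+1}\|^2+\|\X^t-\Bar{\X}^t\|^2]$ from the last line of that lemma, I first bound $\sum_t\alpha_t^2\tfrac1n\E\|\U^t-\Bar{\U}^t\|^2$ by Lemma~\ref{lemma_technical_summation} with the nonincreasing weights $c_k=\alpha_k^2$, controlling the $\|\X\|^2$-terms that appear by an $\alpha_t^2$-weighted version of the first estimate; the term $\|\X^{t+1}-\Bar{\X}^{t+1}\|^2$ forces a re-indexing $t\mapsto t+1$, absorbed using monotonicity of $\alpha$ and $\eta$, and this is precisely why the right-hand sums in the third and fourth estimates run to $T$ (resp.\ $t+1$) rather than $T-1$ (resp.\ $t$). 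Feeding the resulting $\U$-consensus bound into the $\Z$-recursion — again via Lemma~\ref{lemma_technical_summation}, with $c_k\equiv1$ for the third estimate and with a square-root step plus $c_k=\eta_k$ for the fourth — yields the stated bounds, one factor of a constant bounded by $\Tilde{\rho}$ being picked up at each of the two further applications of the summation lemma, which accounts for the $\Tilde{\rho}^2$ and $\Tilde{\rho}^3$.

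I expect the difficulties here to be bookkeeping rather than conceptual. The delicate points are: (i) keeping the contraction constant in the squared recursions at $\tfrac{1+\rho^2}{2}$ so that the emergent constant $\tfrac{2\rho^2}{1-\rho^2}\tfrac{1+\rho^2}{1-\rho^2}$ really does close to $\Tilde{\rho}$, while the square-root recursions instead need the first term $\tfrac{1}{1-\sqrt{(1+\rho)/2}}\sqrt{\tfrac{1+\rho^2}{1-\rho^2}}$ of the maximum; (ii) handling the index shift caused by the $\|\X^{t+1}-\Bar{\X}^{t+1}\|^2$ appearing in the $\V$-difference estimate, which is the source of the extended summation ranges and of the extra $\Tilde{\rho}$ power; and (iii) being consistent about which recursions hold pathwise (those for $\X$ and $\hat{\Z}$) versus only in expectation (those involving $\V$, $\U$, $\Z$), so that the $\E\|\cdot\|\le\sqrt{\E\|\cdot\|^2}$ and $\sqrt{a+b}\le\sqrt{a}+\sqrt{b}$ manipulations are applied in the correct order.
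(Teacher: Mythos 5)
Your proposal matches the paper's proof essentially step for step: each bound is obtained by feeding the one-step contractions of Lemma~\ref{lemma_one_step_consensus} into the weighted summation Lemma~\ref{lemma_technical_summation}, squared for the first and third estimates and square-rooted (via $\sqrt{a+b}\le\sqrt{a}+\sqrt{b}$ and $\E\|\cdot\|\le\sqrt{\E\|\cdot\|^2}$) for the second and fourth, chaining $\Z\rightarrow\U\rightarrow\V^{t+1}-\V^t\rightarrow\X$ with the zero initialization killing the boundary terms $c_0a_0$. Your extra care in retaining the sharper contraction factor $\tfrac{1+\rho^2}{2}$ and coefficient $\rho^2\tfrac{1+\rho^2}{1-\rho^2}$ for the squared $\X$-recursion is in fact what is needed to land exactly on the second term $\tfrac{\rho}{1-\rho}\tfrac{1+\rho^2}{1-\rho^2}$ of $\Tilde{\rho}$; the paper applies Lemma~\ref{lemma_technical_summation} with the looser $r=\tfrac{1+\rho}{2}$ and $b_\tau=\eta_\tau^2\tfrac{1+\rho^2}{1-\rho^2}$, which only yields $\tfrac{2}{1-\rho}\tfrac{1+\rho^2}{1-\rho^2}$ --- a benign constant-level discrepancy that your version repairs.
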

\begin{proof}
    The first line is by Lemma \ref{lemma_one_step_consensus} and \ref{lemma_technical_summation} by taking $a_\tau=\frac{1}{n}\E\|\X^{\tau} - \Bar{\X}^{\tau}\|^2$, $b_\tau=\eta_\tau^2 (1+\rho^2) / (1-\rho^2)$, $c_\tau=1$ and $r=(1+\rho) / 2$ in Lemma \ref{lemma_technical_summation} directly.
    
    For the second line, by Lemma \ref{lemma_one_step_consensus} we get 
    \begin{equation*}
    \begin{split}
        \sqrt{\frac{1}{n}\E\|\X^{t+1} - \Bar{\X}^{t+1}\|^2} \leq&\sqrt{ \frac{1+\rho}{2}\frac{1}{n}\E\|\X^{t} - \Bar{\X}^{t}\|^2 + \eta_t^2 \frac{1+\rho^2}{1-\rho^2}\frac{1}{n}\E\|\hat{\Z}^{t} - \Bar{\hat{\Z}}^t\|^2} \\
        \leq & \sqrt{\frac{1+\rho}{2}}\sqrt{ \frac{1}{n}\E\|\X^{t} - \Bar{\X}^{t}\|^2} + \eta_t \sqrt{\frac{1+\rho^2}{1-\rho^2}}\sqrt{\frac{1}{n}\E\|\hat{\Z}^{t} - \Bar{\hat{\Z}}^t\|^2} \\
        \leq & \sqrt{\frac{1+\rho}{2}}\sqrt{ \frac{1}{n}\E\|\X^{t} - \Bar{\X}^{t}\|^2} + \eta_t \sqrt{\frac{1+\rho^2}{1-\rho^2}}
    \end{split}
    \end{equation*}
    where the second inequality is by $\sqrt{a+b}\leq\sqrt{a} + \sqrt{b}$ and third is by $\|\hat{\Z}^{t} - \Bar{\hat{\Z}}^t\|^2\leq n$. Now taking $a_\tau=\sqrt{\frac{1}{n}\E\|\X^{\tau} - \Bar{\X}^{\tau}\|^2}$, $b_\tau=\eta_\tau \sqrt{(1+\rho^2) / (1-\rho^2)}$, $c_\tau=\alpha_{\tau}$ and $r=\sqrt{(1+\rho) / 2}$ as in Lemma \ref{lemma_technical_summation} will give the first line of the result. Note that here $a_0=0$ due to the initialization of our algorithm.

    Now to the third line, again by Lemma \ref{lemma_one_step_consensus} we get 
    \begin{equation*}
    \begin{split}
        \sum_{t=0}^{T-1}\frac{1}{n}\E\|\Z^{t} - \Bar{\Z}^{t}\|^2 &\leq \Tilde{\rho} \sum_{t=0}^{T-1}\alpha_t^2 \frac{1}{n}\E\|\U^t - \Bar{\U}^t\|^2 \\
        \sum_{t=0}^{T-1}\alpha_t^2 \frac{1}{n}\E\|\U^t - \Bar{\U}^t\|^2 &\leq \Tilde{\rho} \sum_{t=0}^{T-1}\alpha_t^2 \frac{1}{n}\E\|\V^{t+1} - \V^t\|^2
    \end{split}
    \end{equation*}
    by using Lemma \ref{lemma_one_step_consensus} for two times. Also since
    \begin{equation*}
    \begin{split}
        \sum_{t=0}^{T-1}\alpha_t^2 \frac{1}{n}\E\|\V^{t+1} - \V^t\|^2 &\leq (10 \sigma^2+ 5 L^2) \sum_{t=0}^{T-1}\alpha_t^2 + 2\sum_{t=0}^{T}\alpha_t^2 \frac{1}{n}\E\left\|\X^{t}-\bar{\X}^{t}\right\|^2 \\
        \sum_{t=0}^{T}\alpha_t^2 \frac{1}{n}\E\left\|\X^{t}-\bar{\X}^{t}\right\|^2 &\leq \Tilde{\rho} \sum_{t=0}^{T} \alpha_t^2 \eta_t^2
    \end{split}
    \end{equation*}
    where for the third line we again use Lemma \ref{lemma_one_step_consensus}. Combining all above equations gives the second line of the theorem.

    As for the forth line, note that from Lemma \ref{lemma_one_step_consensus} and $\sqrt{a+b}\leq\sqrt{a} + \sqrt{b}$, we have:
    \begin{equation*}
    \begin{aligned}
         \sqrt{\frac{1}{n}\E\|\Z^{t+1} - \Bar{\Z}^{t+1}\|^2} &\leq \sqrt{\frac{1+\rho}{2}} \sqrt{\frac{1}{n}\E\|\Z^{t} - \Bar{\Z}^{t}\|^2} + \alpha_t\sqrt{\frac{1+\rho^2}{1-\rho^2}}  \sqrt{\frac{1}{n}\E\|\U^t - \Bar{\U}^t\|^2},\\
         \sqrt{\frac{1}{n}\E\|\U^{t+1} - \Bar{\U}^{t+1}\|^2} &\leq \sqrt{\frac{1+\rho}{2}} \sqrt{\frac{1}{n}\E\|\U^{t} - \Bar{\U}^{t}\|^2} + \sqrt{\frac{1+\rho^2}{1-\rho^2}}  \sqrt{\frac{1}{n}\E\|\V^{t+1} - \V^t\|^2}, \\
         \sqrt{\frac{1}{n}\E\|\V^{t+1} - \V^t\|^2}&\leq \sqrt{10 \sigma^2+5 L^2} +\sqrt{5}L \left[\sqrt{\frac{1}{n}\E\left\|\X^{t+1}-\bar{\X}^{t+1}\right\|^2}+\sqrt{\frac{1}{n}\E\left\|\X^{t}-\bar{\X}^{t}\right\|^2}\right].
    \end{aligned}
    \end{equation*}
    Repeating the proof of the third line gives the fourth line.
\end{proof}

Now we are ready to show our final convergence for constant stepsizes, which we restate as follows:
\begin{theorem}\label{thm_fix_step_appendix}
    Suppose Assumptions \ref{assump_l_smooth} and \ref{assump_bdd_variance} hold, also take $\alpha_t = \alpha T^{-1/2}$ and $\eta_t=\eta T^{-3/4}$ for any $\alpha, \eta>0$, the update of Algorithm \ref{algo_decen_normalized_averaged_grad_tracking} satisfies:
    \begin{equation*}
    \begin{split}
        &\frac{1}{T} \sum_{t=0}^{T-1}\E\|\nabla f(\bar{x}^t)\| \leq \mathcal{O}\bigg( (\frac{\Delta_0}{\eta } + \frac{2L\eta}{\alpha} + \frac{2\sigma\sqrt{\alpha}}{\sqrt{n}})\frac{1}{T^{1/4}} + \frac{\Tilde{\rho}^2\sqrt{10\sigma^2+5L^2}\alpha}{T^{1/2}} \bigg), \\
        &\frac{1}{T} \sum_{t=0}^{T-1}\E\|\Bar{z}^t - \nabla f(\Bar{x}^t)\| \leq \mathcal{O}\bigg( (\frac{2L\eta}{\alpha} + \frac{2\sigma\sqrt{\alpha}}{\sqrt{n}})\frac{1}{T^{1/4}} + 2L\tilde{\rho}\eta\frac{1}{T^{1/2}} \bigg), \\
        &\frac{1}{T} \sum_{t=0}^{T-1}\frac{1}{n}\left[\|\X^{t} - \Bar{\X}^{t}\|^2 + \|\Z^{t} - \Bar{\Z}^{t}\|^2\right] \leq \mathcal{O}\bigg( \Tilde{\rho}\frac{\eta}{T^{1/2}}+ \Tilde{\rho}^2(10 \sigma^2+ 5L^2)\frac{\alpha^4}{T} + 2\Tilde{\rho}^3 \frac{\alpha^4\eta^2}{T^{5/2}} \bigg).
    \end{split}
    \end{equation*}
    The above three bounds correspond to stationarity, approximation to gradient and consensus errors. Note that we hide some higher-order terms in $\mathcal{O}$.
\end{theorem}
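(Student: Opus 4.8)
The plan is to assemble all three bounds from the three ingredients already proved --- the descent inequality (Lemma~\ref{lemma_descent}), the recursion for the tracking error $\bar z^t-\nabla f(\bar x^t)$ (Lemma~\ref{lemma_consensus}), and the cumulative consensus estimates (Lemma~\ref{lemma_concensus_summation}) --- after substituting the constant schedule $\alpha_t\equiv\alpha T^{-1/2}$, $\eta_t\equiv\eta T^{-3/4}$. These sequences are (trivially) non-increasing and, once $T$ is large enough that $\alpha_t,\eta_t\le 1$, satisfy the hypotheses of Lemma~\ref{lemma_concensus_summation}, so that lemma applies verbatim.

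First I would sum Lemma~\ref{lemma_descent} over $t=0,\dots,T-1$ and take total expectation; the function-value differences telescope and $\E[f(\bar x^T)]\ge f^*$ gives $\Delta_0=f(\bar x^0)-f^*$, leaving
\begin{align*}
\sum_{t=0}^{T-1}\eta_t\,\E\|\nabla f(\bar x^t)\|
&\le \Delta_0 + 2\sum_{t=0}^{T-1}\eta_t\,\E\|\bar z^t-\nabla f(\bar x^t)\| \\
&\quad + \sum_{t=0}^{T-1}\frac{\eta_t}{n}\sum_{i=1}^{n}\E\|z_i^t-\bar z^t\|
+ \frac{L}{2}\sum_{t=0}^{T-1}\eta_t^2 .
\end{align*}
Since $\eta_t$ is constant, dividing by $T\eta_t=\eta T^{1/4}$ puts $\tfrac1T\sum_t\E\|\nabla f(\bar x^t)\|$ on the left, so the whole theorem reduces to estimating the three sums on the right.

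The term $\tfrac{L}{2}\sum_t\eta_t^2=\tfrac{L\eta^2}{2}T^{-1/2}$ is higher order after the division. For $\sum_t\tfrac{\eta_t}{n}\sum_i\E\|z_i^t-\bar z^t\|$, Cauchy--Schwarz gives $\tfrac1n\sum_i\|z_i^t-\bar z^t\|\le(\tfrac1n\|\Z^t-\bar\Z^t\|^2)^{1/2}$; then Jensen and the fourth line of Lemma~\ref{lemma_concensus_summation} bound it by $\tilde\rho^2\sqrt{10\sigma^2+5L^2}\sum_t\eta_t\alpha_t+2\sqrt{5}\,L\tilde\rho^3\sum_t\eta_t^2\alpha_t$, with $\sum_t\eta_t\alpha_t=\eta\alpha T^{-1/4}$ and $\sum_t\eta_t^2\alpha_t=\eta^2\alpha T^{-1}$. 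For $\sum_t\eta_t\,\E\|\bar z^t-\nabla f(\bar x^t)\|$ I use Lemma~\ref{lemma_consensus}: because $\alpha_t$ is constant, $\beta_t\equiv\beta:=1-\alpha T^{-1/2}$ and $\beta_{(\tau+1):t}=\beta^{\,t-\tau}$, so swapping the order of the double sum and using the geometric bounds $\sum_{t\ge\tau}\beta^{\,t-\tau}\le(1-\beta)^{-1}=T^{1/2}/\alpha$ and $\sum_{t\ge\tau}\beta^{2(t-\tau)}\le(1-\beta^2)^{-1}\le T^{1/2}/\alpha$ --- combined with the second line of Lemma~\ref{lemma_concensus_summation} to control $\sum_\tau\alpha_\tau(\tfrac1n\E\|\X^\tau-\bar\X^\tau\|^2)^{1/2}\le\tilde\rho\sum_\tau\alpha_\tau\eta_\tau$ --- collapses the three pieces of Lemma~\ref{lemma_consensus} into a sum whose leading behaviour, divided by $T$, is $\tfrac{L\eta}{\alpha}T^{-1/4}+\tfrac{\sigma\sqrt{\alpha}}{\sqrt n}T^{-1/4}$ plus $T^{-3/4}$-order remainders; this last computation \emph{is} the second bound of the theorem. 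Feeding everything back, dividing by $\eta T^{1/4}$, and absorbing the $T^{-1/2},T^{-3/4},T^{-5/4},T^{-1}$ terms into $\mathcal{O}$ gives the first bound. The third bound is immediate from the first and third lines of Lemma~\ref{lemma_concensus_summation} with $\sum_t\eta_t^2=\eta^2T^{-1/2}$, $\sum_t\alpha_t^2=\alpha^2$, $\sum_t\alpha_t^2\eta_t^2=\alpha^2\eta^2T^{-3/2}$, again dividing by $T$.

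The main obstacle is the bookkeeping in the tracking-error estimate: one must check that the geometric factor $(1-\beta)^{-1}=T^{1/2}/\alpha$ produced by Lemma~\ref{lemma_consensus} cancels exactly against the $\alpha$'s in $\sum_t\alpha_t\eta_t$ and in the geometric variance sum, so that the stochastic term retains its $1/\sqrt n$ factor --- hence the linear speedup --- and no stray powers of $\alpha$, $\tilde\rho$, or $n$ survive in the leading-order term, all while tracking which of the coupled recursions for $\X,\Z,\U,\V$ contributes which power of $T$. Conceptually this is routine once the schedule is substituted; the care is entirely in the constant- and rate-tracking.
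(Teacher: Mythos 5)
Your proposal is correct and follows essentially the same route as the paper: sum the descent inequality of Lemma~\ref{lemma_descent}, control the tracking error via Lemma~\ref{lemma_consensus} together with the cumulative consensus bounds of Lemma~\ref{lemma_concensus_summation}, and evaluate the resulting geometric and power sums under the constant schedule (your order-of-summation swap with $\sum_{t\ge\tau}\beta^{t-\tau}\le T^{1/2}/\alpha$ is just the paper's direct evaluation of $\sum_t\eta_t\sum_\tau\beta_{(\tau+1):t}\eta_\tau$ written the other way). All the leading-order constants you report ($\eta^2/\alpha$, $\eta\sqrt{\alpha}$, $\alpha\eta T^{-1/4}$, etc.) match the paper's.
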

\begin{proof}%[Proof of Theorem \ref{thm_fix_step}]
    By Lemma \ref{lemma_descent}, we have 
    \begin{equation*}
    \begin{aligned}
        \eta_t\E\|\nabla f(\bar{x}^t)\|&\leq \E[f(\bar{x}^{t}) - f(\bar{x}^{t+1})] + 2\eta_t \E\|\Bar{z}^t - \nabla f(\bar{x}^t)\| + \E[\frac{\eta_t}{n}\sum_{i=1}^{n}\|z_{i}^{t} - \Bar{z}^t\|] + \frac{\eta_t^2 L}{2} \\
        &\leq \E[f(\bar{x}^{t}) - f(\bar{x}^{t+1})] + 2\eta_t \E\|\Bar{z}^t - \nabla f(\bar{x}^t)\| + \eta_t\sqrt{\frac{1}{n}\sum_{i=1}^{n}\E\|z_{i}^{t} - \Bar{z}^t\|^2} + \frac{\eta_t^2 L}{2}
    \end{aligned}
    \end{equation*}
    where in the second equality we used $\E X^2\geq (\E X)^2 $.

    Now sum up from $t=0$ to $T-1$ and using Lemma \ref{lemma_descent} and \ref{lemma_concensus_summation}, we get
    \begin{equation}\label{eq_thm_fix_temp1}
    \begin{aligned}
        \sum_{t=0}^{T-1}\eta_t\E\|\nabla f(\bar{x}^t)\| \leq& \Delta_0 + 2\sum_{t=0}^{T-1}\eta_t \E\|\Bar{z}^t - \nabla f(\bar{x}^t)\| + \sum_{t=0}^{T-1}\eta_t\sqrt{\frac{1}{n}\sum_{i=1}^{n}\E\|z_{i}^{t} - \Bar{z}^t\|^2} + \frac{L}{2}\sum_{t=0}^{T-1}\eta_t^2 \\
        \leq& \Delta_0 + 2\sum_{t=0}^{T-1}\eta_t \bigg(L\sum_{\tau=1}^{t}\beta_{(\tau+1):t}\eta_\tau + L\tilde{\rho}\sum_{\tau=0}^{t}\beta_{(\tau+1):t}\alpha_{\tau}\eta_{\tau} + \sigma\sqrt{\frac{1}{n}\sum_{\tau=0}^{t}\beta_{(\tau+1):t}^2\alpha_{\tau}^2}\bigg) \\ &+\Tilde{\rho}^2\sqrt{10\sigma^2+5L^2}\sum_{t=0}^{T-1}\eta_t\alpha_t + 2\sqrt{5}L\tilde{\rho}^3\sum_{t=0}^{T}\eta_t^2\alpha_t+ \frac{L}{2}\sum_{t=0}^{T-1}\eta_t^2
    \end{aligned}
    \end{equation}

    Now we inspect the each of the terms on the right hand side. By our choice of $\eta_t$ and $\alpha_t$, it's straightforward to verify that
    \begin{equation}\label{eq_thm_fix_temp2}
    \begin{aligned}
        \sum_{t=1}^{T}\alpha_t\eta_t &= \frac{\alpha\eta}{T^{1/4}},\ \sum_{t=1}^{T}\alpha_t\eta_t^2 =\frac{\alpha\eta^2}{T},\ \sum_{t=1}^{T}\eta_t^2 = \frac{\eta^2}{T^{1/2}}\\
    \end{aligned}
    \end{equation}
    and
    \begin{equation*}
    \begin{aligned}
        \sum_{t=0}^{T-1}\eta_t\sum_{\tau=1}^{t}\beta_{(\tau+1):t}\eta_\tau &= \sum_{t=0}^{T-1}\frac{\eta}{T^{3/4}}\sum_{\tau=1}^{t}(1-\frac{\alpha}{T^{1/2}})^{t-\tau}\frac{\eta}{T^{3/4}}\\
        &= \frac{\eta^2}{T^{3/2}} \sum_{t=0}^{T-1}\sum_{\tau=1}^{t}(1-\frac{\alpha}{T^{1/2}})^{t-\tau} \\
        &= \frac{\eta^2}{T^{3/2}} \sum_{t=0}^{T-1}(1-\frac{\alpha}{T^{1/2}})^{t} \frac{T^{1/2}}{\alpha}[(1-\frac{\alpha}{T^{1/2}})^{-t+1} - 1] \\
        &\leq \frac{\eta^2}{\alpha T} \sum_{t=0}^{T-1} (1-\frac{\alpha}{T^{1/2}})\leq \frac{\eta^2}{\alpha}
    \end{aligned}
    \end{equation*}
    Similarly
    \begin{equation*}
    \begin{aligned}
        \sum_{t=0}^{T-1}\eta_t\sum_{\tau=0}^{t}\beta_{(\tau+1):t}\alpha_{\tau}\eta_{\tau} &\leq \frac{\eta^2}{T^{1/2}}\\
        \sum_{t=0}^{T-1}\eta_t\sqrt{\sum_{\tau=0}^{t}\beta_{(\tau+1):t}^2\alpha_{\tau}^2} &\leq \eta\sqrt{\alpha}
    \end{aligned}
    \end{equation*}

    Now plugging everything back we get:
    \begin{equation*}
    \begin{aligned}
        \frac{\eta}{T^{3/4}}\sum_{t=0}^{T-1}\E\|\nabla f(\bar{x}^t)\|
        &\leq \Delta_0 + 2L\frac{\eta^2}{\alpha} + 2L\tilde{\rho}\frac{\eta^2}{T^{1/2}}+ 2\frac{\sigma}{\sqrt{n}}\eta\sqrt{\alpha} \\ & +\Tilde{\rho}^2\sqrt{10\sigma^2+5L^2}\frac{\alpha\eta}{T^{1/4}} + 2\sqrt{5}L\tilde{\rho}^3\frac{\alpha\eta^2}{T} + \frac{L}{2}\frac{\eta^2}{T^{1/2}}
    \end{aligned}
    \end{equation*}
    i.e. 
    \begin{equation*}
    \begin{aligned}
        \frac{1}{T}\sum_{t=0}^{T-1}\E\|\nabla f(\bar{x}^t)\|
        &\leq (\frac{\Delta_0}{\eta } + \frac{2L\eta}{\alpha} + \frac{2\sigma\sqrt{\alpha}}{\sqrt{n}})\frac{1}{T^{1/4}} + (2L\tilde{\rho}\eta+\frac{L\eta}{2})\frac{1}{T^{3/4}}\\ & +\frac{\Tilde{\rho}^2\sqrt{10\sigma^2+5L^2}\alpha}{T^{1/2}} + \frac{2\sqrt{5}L\tilde{\rho}^3\alpha^2\eta}{T^{7/4}}
    \end{aligned}
    \end{equation*}
    The first line of the theorem is obtained by neglecting the higher-order terms. Note that we also proved the second line of the theorem since \eqref{eq_thm_fix_temp1} already contains the bound for $\|\Bar{z}^t-\nabla f(\Bar{x}^t)\|$.

    It remains to bound the consensus error (third line), which follows directly from Lemma \ref{lemma_concensus_summation} and \eqref{eq_thm_fix_temp2}.
\end{proof}

% \begin{remark}\label{rmk_fix_step_linear_speedup}
%     To make $1/T\sum_{t=0}^{T-1}\E\|\nabla f(\Bar{x}^t)\|\leq\epsilon$, we need $T=\tilde{\mathcal{O}}(1/\epsilon^4)$, which matches the lower bound as in (cite Carmon's lower bound paper).
    
%     To achieve the linear speedup~\citep{lian2017can} (cite Cho's paper here), one concrete choice of stepsize would be $\alpha=n^{1/2}$ and $\eta=n^{1/4}$ so that we have the convergence rate of:
%     \begin{equation*}
%         \frac{1}{T} \sum_{t=0}^{T-1}\E\|\nabla f(\bar{x}^t)\| \leq \mathcal{O}\bigg( \frac{\Delta_0 + L + \sigma}{n^{1/4}T^{1/4}} + \frac{\Tilde{\rho}^2(\sigma+L)n^{1/2}}{T^{1/2}} \bigg)
%     \end{equation*}
%     which implies that to make $1/T\sum_{t=0}^{T-1}\E\|\nabla f(\Bar{x}^t)\|\leq\epsilon$, the sample complexity on each agent is of order $\mathcal{O}(T) = \mathcal{O}(1/(n\epsilon^4))$ provided that $T$ is sufficiently large, which indicates that we have a clear linear speedup~\citep{lian2017can}. Note that the approximation error $\|\Bar{z}^t-\nabla f(\Bar{x}^t)\|$ also enjoys this linear speedup effect. This choice of parameter requires prior knowledge of the total number of nodes, yet still doesn't require global information about the loss function or the topological information about the communication graph.
% \end{remark}

We also present the result when we don't fix the total number of iterations in advance. We have the following useful technical lemma. Most of the result in this Lemma is from Lemma 11 in \cite{hubler2023parameter}.

\begin{lemma}\label{lemma_technical_series}
    Let $q\in(0, 1)$, $p\geq0$ and $t>0$. Further let positive integers $a, b$ s.t. $2\leq a\leq b$, then we have that for any $\alpha>0$,
\begin{equation*}
\prod_{t=a}^b\left(1-\alpha t^{-q}\right) \leq \exp \left(\frac{\alpha}{1-q}\left(a^{1-q}-b^{1-q}\right)\right).
\end{equation*}

If in addition $p\geq q$, we have
\begin{equation*}
    \sum_{t=a}^b t^{-p} \prod_{\tau=a}^t\left(1-\alpha \tau^{-q}\right) \leq \frac{(a-1)^{q-p} \exp \left(\alpha\frac{a^{1-q}-(a-1)^{1-q}}{1-q}\right)-b^{q-p} \exp \left(\alpha\frac{a^{1-q}-b^{1-q}}{1-q}\right)}{(\alpha+(p-q) b^{q-1})}.
\end{equation*}
and in particular,
\begin{equation*}
    \sum_{t=a}^b t^{-p} \prod_{\tau=a}^t\left(1-\alpha \tau^{-q}\right) \leq \frac{(a-1)^{q-p} }{\alpha} \exp \left(\alpha\frac{a^{1-q}-(a-1)^{1-q}}{1-q}\right) = \mathcal{O}(\frac{a^{q-p}}{\alpha}).
\end{equation*}

If further in addition $p<1$, $\alpha \geq 2$, we have
\begin{equation*}
\sum_{t=2}^b t^{-p} \prod_{\tau=t+1}^b\left(1-\alpha\tau^{-q}\right) \leq \frac{2}{\alpha}\exp \left(\frac{\alpha}{1-q}\right)(b+1)^{q-p}
\end{equation*}
\end{lemma}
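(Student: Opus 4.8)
The plan is to prove the four displays in turn, each reducing to the elementary inequality $1+x\le e^{x}$ together with a comparison between a sum of the form $\sum t^{-q}$ (or $\sum t^{-p}$) and the corresponding integral of a power function. For the product bound I would apply $1-\alpha t^{-q}\le e^{-\alpha t^{-q}}$ to each factor, so that $\prod_{t=a}^{b}(1-\alpha t^{-q})\le\exp\!\big(-\alpha\sum_{t=a}^{b}t^{-q}\big)$ on the range where every factor is nonnegative, and then lower-bound the exponent by a Riemann sum: since $x\mapsto x^{-q}$ is decreasing, $\sum_{t=a}^{b}t^{-q}\ge\int_{a}^{b+1}x^{-q}\,dx=\frac{(b+1)^{1-q}-a^{1-q}}{1-q}\ge\frac{b^{1-q}-a^{1-q}}{1-q}$, which yields exactly $\exp\!\big(\tfrac{\alpha}{1-q}(a^{1-q}-b^{1-q})\big)$. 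When an early factor $1-\alpha t^{-q}$ is negative, either the whole product is $\le 0<$ RHS, or one peels off the (index-independent) block of negative factors and notes that the remaining positive tail still decays at least this fast; since the downstream theorems keep only $\tilde{\mathcal{O}}$-rates, such bounded offsets are harmless.

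The two ``forward-product'' sum bounds follow the Abel-summation / discrete-comparison argument of \cite[Lemma 11]{hubler2023parameter}. One bounds $\prod_{\tau=a}^{t}(1-\alpha\tau^{-q})$ through the first display and introduces a potential sequence $g_{t}$, of order $t^{q-p}/\alpha$ times that product, chosen so that $g_{t}-g_{t+1}\ge t^{-p}\prod_{\tau=a}^{t}(1-\alpha\tau^{-q})$; summing this telescoping inequality for $t=a,\dots,b$ collapses to $g_{a}-g_{b+1}$, which is the stated right-hand side. The hypothesis $p\ge q$ is what makes $(t+1)^{q-p}-t^{q-p}$ nonpositive, and the mean-value estimate $t^{q-p}-(t+1)^{q-p}\le(p-q)t^{q-p-1}$ is what produces the $(p-q)b^{q-1}$ correction in the denominator. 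The ``in particular'' versions then come from discarding the nonnegative subtracted $b$-term, weakening the denominator to $\alpha$, and noting that $a^{1-q}-(a-1)^{1-q}\le(1-q)(a-1)^{-q}\le 1-q$, so the exponential prefactor is $\le e^{\alpha}=\mathcal{O}(1)$ while $(a-1)^{q-p}=\Theta(a^{q-p})$; together these give the $\mathcal{O}(a^{q-p}/\alpha)$ estimate.

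For the ``backward-product'' sum I would induct on $b$ via the one-line recursion $S_{b}:=\sum_{t=2}^{b}t^{-p}\prod_{\tau=t+1}^{b}(1-\alpha\tau^{-q})=b^{-p}+(1-\alpha b^{-q})S_{b-1}$. In the regime $b\ge\alpha^{1/q}$ the factor $1-\alpha b^{-q}$ lies in $[0,1)$, and substituting the hypothesis $S_{b-1}\le\frac{2}{\alpha}e^{\alpha/(1-q)}b^{q-p}$ reduces the inductive step to the dimensionless inequality $2e^{\alpha/(1-q)}\big(1-\tfrac{p-q}{\alpha}\big)\ge 1$, which holds since $\alpha\ge 2$ and $p<1$ give $1-\tfrac{p-q}{\alpha}\ge\tfrac12$ and $e^{\alpha/(1-q)}\ge e^{2}>1$; once more the mean-value bound on $b^{q-p}-(b+1)^{q-p}$ is the crucial estimate.

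I expect the genuine difficulty to lie in the complementary regime $2\le b\le\alpha^{1/q}$, where the factors $1-\alpha\tau^{-q}$ are negative and the summands of $S_{b}$ (and, earlier, the products $\prod_{\tau}(1-\alpha\tau^{-q})$ appearing in Lemma~\ref{lemma_consensus}) alternate in sign — precisely the regime that occurs in the application, since there $\alpha_{t}=\sqrt{n/t}$. A crude bound through $\prod|1-\alpha\tau^{-q}|$ is far too lossy, so one must either exploit the sign cancellation to seed the induction at $b=\lceil\alpha^{1/q}\rceil$, or — which is all we actually need, since the relevant sums run to $T\gg n\ge\alpha^{1/q}$ — record the bound only for $b$ above that threshold. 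Everything else is routine power-function calculus, so this sign bookkeeping is the one step I would be most careful about.
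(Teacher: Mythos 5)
Your outline is essentially sound and, for the first display, identical to the paper's argument: apply $1-x\le e^{-x}$ factorwise and lower-bound $\sum_{t=a}^b t^{-q}$ by $\int_a^{b+1}x^{-q}\,dx$. For the remaining displays you diverge from the paper in technique but not in substance. The paper bounds $\sum_t t^{-p}\prod_{\tau\le t}(1-\alpha\tau^{-q})$ by passing to the integral $\int t^{-p}\exp(-\tfrac{\alpha t^{1-q}}{1-q})\,dt$ and doing a literal integration by parts, with the hypothesis $p\ge q$ and the monotonicity of the integrand producing the $(p-q)b^{q-1}$ term in the denominator; your Abel-summation/telescoping potential $g_t$ is the discrete analogue of exactly that computation, with the mean-value estimate on $(t+1)^{q-p}-t^{q-p}$ playing the role of differentiating $t^{q-p}$. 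For the backward-product sum the paper rewrites $\prod_{\tau=t+1}^b$ as $\exp(-\alpha\sum_{\tau=1}^b\tau^{-q})\exp(\alpha\sum_{\tau=1}^t\tau^{-q})$ and reduces to the same integral, whereas you induct on $b$ through the recursion $S_b=b^{-p}+(1-\alpha b^{-q})S_{b-1}$; your inductive step does close under $\alpha\ge2$, $p<1$, $b\ge2$ as you indicate. Neither route is preferable in any material way; the paper's is slightly more mechanical, yours avoids the integral bookkeeping.

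The one point where you go beyond the paper is the sign issue, and you are right to worry about it: the first display only follows from termwise bounds when every factor $1-\alpha t^{-q}$ is nonnegative, and as stated the inequality actually fails otherwise (take $a=2$, $b=3$, $q=1/2$, $\alpha=10$: the product is about $29$ while the right-hand side is about $e^{-6.4}$). The paper's proof does not mention this at all, and its own proof of the last display multiplies and divides by $\exp(\pm\alpha\sum\tau^{-q})$ as if all factors were positive, so it inherits the same gap; your induction likewise needs $S_{b-1}\ge0$ or a seed at $b=\lceil\alpha^{1/q}\rceil$ when early factors are negative. Your proposed remedies (odd-sign case, peeling off the negative block, restricting to $b\ge\alpha^{1/q}$) are stated but not carried out, and the "bounded offset" from the negative block is bounded only for fixed $\alpha$ (it grows with $\alpha$, which in the application is $\sqrt{n}$), so this part of your write-up is not yet a proof. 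In fairness, the lemma as invoked in Lemma~\ref{lemma_technical_summation2} takes $0<\alpha<1$, where every factor lies in $(0,1)$ and the issue is vacuous---though that clashes with the $\alpha\ge2$ hypothesis of the last display, an inconsistency in the paper rather than in your argument. If you restrict the lemma to the regime $\alpha t^{-q}\le1$ for all $t$ in range (or state it for $b$ above the threshold), everything you wrote goes through.
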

\begin{proof}
    For the first equation we get
\begin{equation*}
\prod_{t=a}^b\left(1-\alpha t^{-q}\right) \leq \exp \left(-\alpha \sum_{\tau=a}^b t^{-q}\right) \leq \exp \left(-\alpha \int_a^{b+1} t^{-q} d t\right)=\exp \left(\frac{\alpha }{1-q}\left(a^{1-q}-(b+1)^{1-q}\right)\right).
\end{equation*}

Now for the second line, using the above result we get
\begin{equation}\label{eq_lemma_tech_series_1}
\begin{aligned}
    \sum_{t=a}^b t^{-p} \prod_{\tau=a}^t\left(1-\alpha \tau^{-q}\right) &\leq \exp \left(\frac{\alpha a^{1-q}}{1-q}\right) \sum_{t=a}^b t^{-p} \exp \left(-\frac{\alpha (t+1)^{1-q}}{1-q}\right) \\
    & \leq \exp \left(\frac{\alpha a^{1-q}}{1-q}\right) \int_{a-1}^{b} t^{-p} \exp \left(-\frac{\alpha t^{1-q}}{1-q}\right)dt \\
    & = \exp \left(\frac{\alpha a^{1-q}}{1-q}\right) \int_{a-1}^{b} t^{-q} t^{q-p} \exp \left(-\frac{\alpha t^{1-q}}{1-q}\right)dt
\end{aligned}
\end{equation}
The above integral can be calculated by integration by parts, specifically:
\begin{equation*}
\begin{aligned}
    &\int_{a-1}^{b} t^{q-p} t^{-q} \exp \left(-\frac{\alpha t^{1-q}}{1-q}\right) dt \\
    &= \left[ -\frac{t^{q-p}}{\alpha} \exp \left(-\frac{\alpha t^{1-q}}{1-q}\right) \right]_{t=a-1}^{t=b} + (q - p) \int_{a-1}^{b} \frac{t^{q-p-1}}{\alpha} \exp\left( -\frac{\alpha t^{1-q}}{1-q} \right) dt.
\end{aligned}
\end{equation*}
Finally, since the integrand is monotonically decreasing and $p\geq q$, we have
$$
(q - p)\int_{a-1}^{b} t^{q-p-1}\exp\left( -\frac{\alpha t^{1-q}}{1-q} \right) dt \leq (q - p)b^{q-1}\int_{a-1}^{b} t^{-p}\exp\left( -\frac{\alpha t^{1-q}}{1-q} \right) dt
$$
which is exactly the integral we started with. The results in the second and third lines are thus by rearranging terms.

Now for the last line of the lemma, and we use the similar technique as the second line, both adopted from the proof of Lemma 10 of \cite{hubler2023parameter}. We have
\begin{equation*}
\begin{aligned}
    \sum_{t=2}^b t^{-p} \prod_{\tau=t+1}^b\left(1-\alpha\tau^{-q}\right) &\leq \exp\left(-\alpha\sum_{\tau=1}^{b}\tau^{-q}\right)\sum_{t=2}^{b}t^{-p}\exp\left(\alpha\sum_{\tau=1}^{t}\tau^{-q}\right) \\
    &\leq \exp\left(-\alpha\int_{1}^{b+1}\tau^{-q}d\tau\right)\sum_{t=2}^{b}t^{-p}\exp\left(\alpha\int_{0}^{t}\tau^{-q}d\tau\right) \\
    &\leq \exp\left( \alpha\frac{1-(b+1)^{1-q}}{1-q}\right)\sum_{t=2}^{b}t^{-p}\exp\left(\alpha\frac{t^{1-q}}{1-q}\right) 
\end{aligned}
\end{equation*}
Note that the summation in the last line above is the same as \eqref{eq_lemma_tech_series_1}. Repeat the proof of the second line gives the result.
\end{proof}

\begin{lemma}\label{lemma_technical_summation2}
    Suppose we take $\alpha_t = \alpha t^{-1/2}$ and $\eta_t=\eta t^{-3/4}$ ($\alpha_0=\eta_0=0$) with $0<\alpha<1$ and $\eta>0$, then we have 
    \begin{equation*}
    \begin{aligned}
        \sum_{t=1}^{T}\alpha_t\eta_t &\leq \mathcal{O}(\alpha\eta),\ \sum_{t=1}^{T}\alpha_t\eta_t^2 \leq \mathcal{O}(\alpha\eta^2),\ \sum_{t=1}^{T}\eta_t^2 \leq \mathcal{O}(\eta^2)\\
    \end{aligned}
    \end{equation*}
    and
    \begin{equation*}
    \begin{aligned}
        \sum_{t=0}^{T-1}\eta_t\sum_{\tau=1}^{t}\beta_{(\tau+1):t}\eta_\tau &\leq \mathcal{O}(\frac{\eta^2}{\alpha}\log(T))\\
        \sum_{t=0}^{T-1}\eta_t\sum_{\tau=0}^{t}\beta_{(\tau+1):t}\alpha_{\tau}\eta_{\tau} &\leq \mathcal{O}(\eta^2\log(T))\\
        \sum_{t=0}^{T-1}\eta_t\sqrt{\sum_{\tau=0}^{t}\beta_{(\tau+1):t}^2\alpha_{\tau}^2} &\leq \mathcal{O}(\sqrt{\alpha}\eta \log(T))
    \end{aligned}
    \end{equation*}
    where $\beta_t = 1-\alpha_t$.
\end{lemma}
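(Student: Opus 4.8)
The first three bounds are elementary: plugging in $\alpha_t=\alpha t^{-1/2}$ and $\eta_t=\eta t^{-3/4}$ gives $\alpha_t\eta_t=\alpha\eta\,t^{-5/4}$, $\alpha_t\eta_t^2=\alpha\eta^2\,t^{-2}$ and $\eta_t^2=\eta^2 t^{-3/2}$, and since the exponents $5/4,2,3/2$ all exceed $1$ the series $\sum_{t\ge1}t^{-5/4}$, $\sum_{t\ge1}t^{-2}$, $\sum_{t\ge1}t^{-3/2}$ converge to absolute constants, which immediately yields $\mathcal{O}(\alpha\eta)$, $\mathcal{O}(\alpha\eta^2)$, $\mathcal{O}(\eta^2)$.

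For the next two sums the plan is to interchange the order of summation, e.g.
\[
\sum_{t=0}^{T-1}\eta_t\sum_{\tau=1}^{t}\beta_{(\tau+1):t}\eta_\tau=\sum_{\tau=1}^{T-1}\eta_\tau\sum_{t=\tau}^{T-1}\eta_t\,\beta_{(\tau+1):t},
\]
and likewise for the $\alpha_\tau\eta_\tau$-weighted variant (the $\tau=0$ term drops out since $\alpha_0=\eta_0=0$). In the inner sum the $t=\tau$ term is $\eta_\tau$ (empty product), and $\sum_{t=\tau+1}^{T-1}\eta_t\prod_{i=\tau+1}^{t}(1-\alpha i^{-1/2})$ is exactly the object bounded by the ``particular case'' of Lemma~\ref{lemma_technical_series} applied with $a=\tau+1$, $q=1/2$, $p=3/4$; since $p\ge q$ and $(\tau+1)^{1/2}-\tau^{1/2}\le\tfrac12$, its exponential prefactor is at most $e^{\alpha}\le e$ because $0<\alpha<1$, so $\sum_{t=\tau}^{T-1}\eta_t\beta_{(\tau+1):t}\le C\tfrac{\eta}{\alpha}\tau^{-1/4}$ for an absolute constant $C$. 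Substituting back, the first sum becomes $\le C\tfrac{\eta}{\alpha}\sum_{\tau\ge1}\eta_\tau\tau^{-1/4}=C\tfrac{\eta^2}{\alpha}\sum_{\tau\ge1}\tau^{-1}=\mathcal{O}\!\left(\tfrac{\eta^2}{\alpha}\log T\right)$, and the second becomes $\le C\tfrac{\eta}{\alpha}\sum_{\tau\ge1}\alpha_\tau\eta_\tau\tau^{-1/4}=C\eta^2\sum_{\tau\ge1}\tau^{-3/2}=\mathcal{O}(\eta^2)\le\mathcal{O}(\eta^2\log T)$.

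The square-root sum is the delicate one and is where I expect the real work to lie. The naive estimate $\sum_{\tau=0}^{t}\beta_{(\tau+1):t}^2\alpha_\tau^2\le\mathcal{O}(\alpha)$ (from the telescoping bound $\sum_{\tau}\alpha_\tau\beta_{(\tau+1):t}\le1$) only gives $\sum_t\eta_t\cdot\mathcal{O}(\sqrt\alpha)=\mathcal{O}(\sqrt\alpha\,\eta\,T^{1/4})$ and loses the logarithmic rate; so I would instead establish the sharper \emph{pointwise} estimate $\sum_{\tau=0}^{t}\beta_{(\tau+1):t}^2\alpha_\tau^2\le\mathcal{O}(\alpha_t)=\mathcal{O}(\alpha t^{-1/2})$ up to an exponentially small correction. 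The steps: (i) reduce $\beta_{(\tau+1):t}^2\le\beta_{(\tau+1):t}$ via $(1-\alpha_i)^2\le 1-\alpha_i$; (ii) split the sum at $\tau=\lceil t/2\rceil$. For the ``recent'' block $\tau\ge\lceil t/2\rceil$ use $\alpha_\tau\le\sqrt2\,\alpha_t$ together with the telescoping identity $\alpha_\tau\beta_{(\tau+1):t}=\beta_{(\tau+1):t}-\beta_{\tau:t}$ to get $\sum_{\tau\ge\lceil t/2\rceil}\alpha_\tau^2\beta_{(\tau+1):t}\le\sqrt2\,\alpha_t\big(1-\beta_{1:t}\big)\le\sqrt2\,\alpha_t$; for the ``old'' block $\tau<\lceil t/2\rceil$ bound $\beta_{(\tau+1):t}\le\exp\!\big(-\alpha\sum_{i=\lceil t/2\rceil+1}^{t}i^{-1/2}\big)\le\sqrt e\,e^{-\alpha\sqrt t/2}$ and $\sum_{\tau<\lceil t/2\rceil}\alpha_\tau^2\le\alpha^2(1+\log t)$. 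Taking square roots, $\sqrt{\sum_{\tau}\beta_{(\tau+1):t}^2\alpha_\tau^2}\le 2^{1/4}\sqrt\alpha\,t^{-1/4}+\mathcal{O}\!\left(\alpha\sqrt{1+\log t}\,e^{-\alpha\sqrt t/4}\right)$; multiplying by $\eta_t=\eta t^{-3/4}$ and summing, the first piece gives $2^{1/4}\sqrt\alpha\,\eta\sum_{t\ge1}t^{-1}=\mathcal{O}(\sqrt\alpha\,\eta\log T)$ and the second piece is a convergent series in $t$, hence $\mathcal{O}(1)$ in $T$ and absorbed (the handful of small-$t$ terms where $\lceil t/2\rceil$ covers everything are handled directly). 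This gives the claimed $\mathcal{O}(\sqrt\alpha\,\eta\log T)$.

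In summary, the routine part is the reorganization of the double sums plus invocations of Lemma~\ref{lemma_technical_series}; the one genuinely non-obvious step is the dyadic split for $\sum_t\eta_t\sqrt{\sum_\tau\beta_{(\tau+1):t}^2\alpha_\tau^2}$ --- isolating a ``recent'' block that contributes $\mathcal{O}(\alpha_t)$ via telescoping and an ``old'' block annihilated by the product --- which is precisely what upgrades the crude $T^{1/4}$ growth to the desired $\log T$ factor.
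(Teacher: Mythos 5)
Your proposal is correct, but it takes a genuinely different route from the paper's for everything beyond the three elementary series (which you both dispatch identically by comparison with $\sum_t t^{-p}$, $p>1$). For the two double sums, the paper keeps the original order of summation and, for each fixed $t$, bounds the inner sum $\sum_{\tau\le t}\tau^{-3/4}\prod_{\xi=\tau+1}^{t}(1-\alpha\xi^{-1/2})$ by $\tfrac{2}{\alpha}e^{2\alpha}(t+1)^{-1/4}$ using the \emph{last} displayed estimate of Lemma~\ref{lemma_technical_series}, then sums $t^{-3/4}(t+1)^{-1/4}\approx t^{-1}$ to obtain the $\log T$; you instead interchange the order of summation and invoke the ``particular case'' (the middle estimate) with $a=\tau+1$, $q=1/2$, $p=3/4$. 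These are dual uses of the two halves of the same technical lemma and give identical rates, but your route has a small advantage: the estimate the paper uses is stated under the hypothesis $\alpha\ge 2$, which clashes with this lemma's standing assumption $0<\alpha<1$, whereas the estimate you use needs only $p\ge q$. For the third (square-root) sum the difference is more substantial: the paper gives no argument at all and defers to Lemma 11 of \citet{hubler2023parameter}, while you supply a self-contained pointwise bound $\sum_\tau\beta_{(\tau+1):t}^2\alpha_\tau^2\lesssim \alpha t^{-1/2}+(\text{exp.\ small})$ via the split at $\tau=\lceil t/2\rceil$, telescoping $\alpha_\tau\beta_{(\tau+1):t}=\beta_{(\tau+1):t}-\beta_{\tau:t}$ on the recent block and killing the old block with the exponential decay of the product --- this is the right mechanism and does recover the claimed $\mathcal{O}(\sqrt{\alpha}\,\eta\log T)$. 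The one detail worth tightening is your tail term $\alpha\sqrt{1+\log t}\,e^{-\alpha\sqrt{t}/4}$: summed against $\eta_t$ it contributes a constant depending on $\alpha$ (of order $\sqrt{\alpha}\,\eta\sqrt{1+\log(1/\alpha)}$) rather than an absolute one; this is easily repaired (e.g.\ telescope $\sum_{\tau<\lceil t/2\rceil}\alpha_\tau^2\beta_{(\tau+1):(\lceil t/2\rceil-1)}\le\alpha$ before peeling off the factor $\beta_{\lceil t/2\rceil:t}\le e^{-\alpha\sqrt{t}/2}$) and is in any case within the slack the paper itself tolerates.
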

\begin{proof}
    The first line is directly by the integral test of the series in the form $\sum_t t^{-p}$ with $p>1$.
    
    The proof of the latter three resembles the proof of Lemma 11 in \cite{hubler2023parameter}. We prove the second line of this Lemma as a show case and refer to Lemma 11 in \cite{hubler2023parameter} for the detail of the proof of the last two lines. For the second line, we have
    \begin{equation*}
    \begin{aligned}
        \sum_{t=0}^{T-1}\eta_t\sum_{\tau=1}^{t}\beta_{(\tau+1):t}\eta_\tau &= \eta^2 + \eta^2 \sum_{t=1}^{T-1}t^{-3/4}\sum_{\tau=2}^{t}\prod_{\xi=\tau+1}^{t}(1-\alpha\xi^{-1/2})\tau^{-3/4} \\
        & \leq \eta^2 + \eta^2 \sum_{t=1}^{T-1}t^{-3/4}\frac{2}{\alpha}\exp \left(2\alpha\right)(t+1)^{-1/4} \\
        &= \eta^2 + \frac{2\eta^2}{\alpha}\exp \left(2\alpha\right)\sum_{t=1}^{T-1}t^{-1} \leq \eta^2 + \frac{2\eta^2}{\alpha}\exp \left(2\alpha\right)\log(T)
    \end{aligned}
    \end{equation*}
    where we use Lemma \ref{lemma_technical_series} for the first inequality.
\end{proof}

Now we are ready to present the final convergence result. We restate the convergence theorem as follows:
\begin{theorem}\label{thm_dim_step_appendix}
    Suppose Assumptions \ref{assump_l_smooth} and \ref{assump_bdd_variance} hold, also take $\alpha_t = \alpha t^{-1/2}$ and $\eta_t=\eta t^{-3/4}$ for any $\eta>0$, the update of Algorithm \ref{algo_decen_normalized_averaged_grad_tracking} satisfies:
    \begin{equation*}
    \begin{split}
        &\frac{1}{T} \sum_{t=0}^{T-1}\E\|\nabla f(\bar{x}^t)\| \leq \Tilde{\mathcal{O}}(\frac{\Delta_0/\eta+L\eta/\alpha+L\Tilde{\rho}\eta+\sigma\sqrt{\alpha} / \sqrt{n} + \Tilde{\rho}^2(\sigma+L)\alpha+L\Tilde{\rho}^3 \eta\alpha+L\eta}{T^{1/4}}), \\
        &\frac{1}{T} \sum_{t=0}^{T-1}\E\|\Bar{z}^t - \nabla f(\Bar{x}^t)\| \leq \Tilde{\mathcal{O}}\bigg( \frac{L\eta/\alpha+L\Tilde{\rho}\eta+\sigma\sqrt{\alpha} / \sqrt{n}}{T^{1/4}} \bigg), \\
        &\frac{1}{T} \sum_{t=0}^{T-1}\frac{1}{n}\E\left[\|\X^{t} - \Bar{\X}^{t}\|^2 + \|\Z^{t} - \Bar{\Z}^{t}\|^2\right] \leq \mathcal{O}\bigg(\frac{\Tilde{\rho}^2(\sigma^2+ L^2 + \Tilde{\rho} \eta^2)}{T}\bigg).
    \end{split}
    \end{equation*}
    The above three bounds correspond to stationarity, approximation to gradient and consensus errors. Note that we hide logarithmic factors in $\Tilde{\mathcal{O}}$.
\end{theorem}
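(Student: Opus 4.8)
The plan is to run exactly the argument used for the constant-stepsize case (Theorem~\ref{thm_fix_step_appendix}), replacing the elementary geometric-sum estimates there with the diminishing-stepsize series bounds of Lemma~\ref{lemma_technical_summation2} (which in turn rests on Lemma~\ref{lemma_technical_series}). First I would start from the descent inequality of Lemma~\ref{lemma_descent}, rearrange it to isolate $\eta_t\E\|\nabla f(\bar x^t)\|$ on the left, apply Jensen's inequality $(\E X)^2\le\E X^2$ to pass to the root-mean-square consensus error $\sqrt{\tfrac1n\sum_i\E\|z_i^t-\bar z^t\|^2}$, and sum over $t=0,\dots,T-1$ so that the function-value gaps telescope to $\Delta_0$.

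The next step is to dispose of the two accumulated error terms on the right. For $\sum_t\eta_t\E\|\bar z^t-\nabla f(\bar x^t)\|$ I would plug in the recursion estimate of Lemma~\ref{lemma_consensus}, and for $\sum_t\eta_t\sqrt{\tfrac1n\E\|\Z^t-\bar\Z^t\|^2}$ the fourth line of Lemma~\ref{lemma_concensus_summation}; combined with Lemma~\ref{lemma_one_step_consensus} this reduces the whole right-hand side to a short list of weighted sums of products $\beta_{(\tau+1):t}=\prod_{\xi=\tau+1}^t(1-\alpha\xi^{-1/2})$ against powers of $\eta_\tau$ and $\alpha_\tau$ --- precisely the six sums evaluated in Lemma~\ref{lemma_technical_summation2}. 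The three ``convergent'' sums ($\sum\alpha_t\eta_t$, $\sum\alpha_t\eta_t^2$, $\sum\eta_t^2$) contribute $T$-independent terms, whereas the three genuinely nested ones each pick up a factor $\log T$, which is exactly why the final bounds sit inside $\tilde{\mathcal O}$.

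Finally, since $\eta_t=\eta t^{-3/4}$ is non-increasing we have $\sum_{t=0}^{T-1}\eta_t\E\|\nabla f(\bar x^t)\|\ge\eta_{T-1}\sum_{t=0}^{T-1}\E\|\nabla f(\bar x^t)\|$, so $\tfrac1T\sum_t\E\|\nabla f(\bar x^t)\|\le\tfrac{1}{\eta T^{1/4}}\sum_t\eta_t\E\|\nabla f(\bar x^t)\|$; dividing the bound from the previous step by $\eta T^{1/4}$ reproduces the seven summands $\Delta_0/\eta$, $L\eta/\alpha$, $L\tilde\rho\eta$, $\sigma\sqrt\alpha/\sqrt n$, $\tilde\rho^2(\sigma+L)\alpha$, $L\tilde\rho^3\eta\alpha$, $L\eta$, all multiplied by $T^{-1/4}$ (using $\sqrt{\sigma^2+L^2}\le\sigma+L$). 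The second displayed inequality is already contained in this chain --- it is the Lemma~\ref{lemma_consensus}/Lemma~\ref{lemma_technical_summation2} estimate for $\sum_t\eta_t\E\|\bar z^t-\nabla f(\bar x^t)\|$ divided by $\sum_t\eta_t\asymp\eta T^{1/4}$ --- and the third (consensus) inequality is immediate from the first and third lines of Lemma~\ref{lemma_concensus_summation} together with $\sum\eta_t^2=\mathcal O(\eta^2)$, $\sum\alpha_t^2=\mathcal O(\alpha^2\log T)$, $\sum\alpha_t^2\eta_t^2=\mathcal O(\alpha^2\eta^2)$, divided by $T$.

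I expect the main obstacle to be the second step: matching each accumulated error to the correct entry of Lemma~\ref{lemma_technical_summation2}, handling the per-term consensus estimate $\sqrt{\tfrac1n\E\|\X^\tau-\bar\X^\tau\|^2}=\mathcal O(\tilde\rho\eta_\tau)$ that one obtains by unrolling Lemma~\ref{lemma_one_step_consensus} (trivial when $\eta$ is constant, but needing the slow decay of $\eta_t$ here), and keeping scrupulous track of which sums stay bounded versus which grow logarithmically, so that the $\tilde\rho$-, $L$-, $\sigma$- and $n$-dependences come out exactly as stated and --- crucially for the problem-parameter-free claim --- no term is left depending on $\rho$ or $L$ through the stepsizes. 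The one genuinely delicate analytic ingredient, namely bounding $\prod_{\xi=\tau+1}^t(1-\alpha\xi^{-1/2})$ by $\exp$-type factors and integrating it against $\tau^{-3/4}$, is already isolated in Lemma~\ref{lemma_technical_series}, so beyond that the remaining work is careful accounting rather than a new idea.
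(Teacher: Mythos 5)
Your plan follows the paper's proof essentially verbatim: the same chain of Lemma~\ref{lemma_descent} with Jensen's inequality, Lemmas~\ref{lemma_consensus} and~\ref{lemma_concensus_summation} to reduce everything to the weighted sums of Lemma~\ref{lemma_technical_summation2}, and the final division via $\tfrac{1}{T}\sum_t\E\|\nabla f(\bar x^t)\|\le \tfrac{1}{\eta T^{1/4}}\sum_t\eta_t\E\|\nabla f(\bar x^t)\|$, with the second and third bounds extracted exactly as you describe. The approach is correct and matches the paper's argument.
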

\begin{proof}%[Proof of Theorem \ref{thm_dim_step}]
    Same as the proof of Theorem \ref{thm_fix_step_appendix}, we get
    \begin{equation*}
    \begin{aligned}
        \sum_{t=0}^{T-1}\eta_t\E\|\nabla f(\bar{x}^t)\|
        \leq& \Delta_0 + 2L\sum_{t=0}^{T-1}\eta_t\sum_{\tau=1}^{t}\beta_{(\tau+1):t}\eta_\tau + 2L\tilde{\rho}\sum_{t=0}^{T-1}\eta_t\sum_{\tau=0}^{t}\beta_{(\tau+1):t}\alpha_{\tau}\eta_{\tau} + 2\frac{\sigma}{\sqrt{n}}\sum_{t=0}^{T-1}\eta_t\sqrt{\sum_{\tau=0}^{t}\beta_{(\tau+1):t}^2\alpha_{\tau}^2} \\ &+\Tilde{\rho}^2\sqrt{10\sigma^2+5L^2}\sum_{t=0}^{T-1}\eta_t\alpha_t + 2\sqrt{5}L\tilde{\rho}^3\sum_{t=0}^{T}\eta_t^2\alpha_t+ \frac{L}{2}\sum_{t=0}^{T-1}\eta_t^2
    \end{aligned}
    \end{equation*}

    Now we inspect the each of the terms on the right hand side. By our choice of $\eta_t$ and $\alpha_t$ and using Lemma \ref{lemma_technical_summation2} we get:
    \begin{equation*}
        \sum_{t=0}^{T-1}\eta_t\E\|\nabla f(\bar{x}^t)\| \leq \Tilde{\mathcal{O}}(\Delta_0+L\frac{\eta^2}{\alpha}+L\Tilde{\rho}\eta^2+\frac{\sigma}{\sqrt{n}}\sqrt{\alpha}\eta + \Tilde{\rho}^2(\sigma+L)\eta\alpha+L\Tilde{\rho}^3 \eta^2\alpha+L\eta^2)
    \end{equation*}
    where we hide the logarithmic factor in $\Tilde{\mathcal{O}}$.

    Now since $\frac{1}{T}\sum_{t=0}^{T-1}\E\|\nabla f(\bar{x}^t)\|\leq T^{-1/4}\sum_{t=0}^{T-1}t^{-3/4}\E\|\nabla f(\bar{x}^t)\|$, we yield the desired result in the theorem statement.
\end{proof}

\end{document}